\newtheorem{thm}{Theorem}[section]
\newtheorem{lem}[thm]{Lemma}
\newtheorem*{thmA}{Theorem A}
\newtheorem{prop}[thm]{Proposition}
\newtheorem{assu-nota}[thm]{Assumption--Notation}
\theoremstyle{definition}
\newtheorem{rem}[thm]{Remark}
\newtheorem{rem-def}[thm]{Remark-definition}
\newcommand{\cmark}{\ding{51}}%
\newcommand{\xmark}{\ding{55}}%
\newcommand{\C}{\mathbb C}
\newcommand{\A}{\mathbb A}
\newcommand{\Z}{\mathbb Z}
\newcommand{\Q}{\mathbb Q}
\newcommand{\F}{\mathbb F}
\newcommand{\T}{\mathbb T}
\newcommand{\pp}{\mathbb P}
\newcommand{\Fc}{\mathcal F}
\newcommand{\Sca}{\mathcal S}
\newcommand{\Xca}{\mathcal X}
\newcommand{\Yca}{\mathcal Y}
\newcommand{\Bca}{\mathcal{B}}
\newcommand{\Tc}{\mathcal T}
\newcommand{\kC}{\mathfrak C}
\DeclareMathOperator{\Aut}{Aut}
\DeclareMathOperator{\Def}{Def}
\DeclareMathOperator{\Pic}{Pic}
\DeclareMathOperator{\Hom}{Hom}
\DeclareMathOperator{\Proj}{Proj}
\DeclareMathOperator{\pr}{pr}
\newcommand{\epsi}{\varepsilon}
\newcommand{\fie}{\varphi}
\newcommand{\OO}{\mathcal{O}}
\newcommand{\ol}{\overline}
\newcommand{\inv}{^{-1}}
\def\ge{\geqslant}
\def\le{\leqslant}
\def\geq{\geqslant}
\def\leq{\leqslant}
\numberwithin{equation}{section}
\title[On some boundary divisors]{On some boundary divisors in the moduli spaces of stable Horikawa surfaces with  $K^2=2p_g-3$}
\author{Ciro Ciliberto and Rita Pardini}
\address{Dipartimento di Matematica, Universit\`a di Roma ``Tor Vergata'', Via della 
Ricerca Scientifica, 00177 Roma, Italia}
\email{cilibert@axp.mat.uniroma2.it}
\address{Dipartimento di Matematica, Universit\`a di Pisa, Largo B. Pontecorvo 5, 56127 Pisa, Italia}
\email{rita.pardini@unipi.it}
\thanks{2020 \emph{Mathematics Subject Classification.}  
14J10, 14J17, 14J29} 
\keywords{Horikawa surfaces, KSBA moduli space, T-singularities, infinitesimal deformations.}
\begin{document}
\begin{abstract}
We describe  the normal stable surfaces with $K^2=2p_g-3$ and  $p_g\ge 15$ whose only non canonical singularity is a cyclic quotient singularity of type $\frac{1}{4k}(1,2k-1)$ and the corresponding locus $\mathcal  D$  inside the KSBA moduli space of stable surfaces.  More precisely,  we show that: (1) a general point of  any irreducible component of $\mathcal D$ corresponds to a surface with a singularity of type $\frac 14(1,1)$, (2) $\ol{\mathcal D}$ is a divisor contained in the closure of the Gieseker moduli space of canonical models of surfaces with $K^2=2p_g-3$ and intersects all the components of such closure,  and (3) the KSBA  moduli space is smooth at a general point of $\mathcal D$. In addition, we show that $\mathcal D$ has 1 or 2 irreducible components,  depending on the residue class of $p_g$ modulo 4. 
\end{abstract}
\thanks{Acknowledgements: The authors are  members of GNSAGA 
of the Istituto Nazionale di Alta Matematica ``F. Severi''}
\maketitle
 \setcounter{tocdepth}{1}
\tableofcontents
\section{Introduction}
Given positive integers $a,b$,  there is a coarse moduli space $\mathcal M_{a,b}$, whose existence has been proven by Gieseker \cite{Gie}, parametrizing isomorphism classes of canonical models of surfaces of general type with $K^2=a$, $\chi(\OO)=b$. 
The space $\mathcal M_{a,b}$ is quasi--projective and admits a modular ``compactification'', the Koll\'ar--Shepherd-Barron--Alexeev moduli space $\mathcal M_{a,b}^{\rm KSBA}$ of stable surfaces, which is a projective scheme. We point out that,  contrary  to the case of curves,   $\mathcal M_{a,b}$ is open in $\mathcal M_{a,b}^{\rm KSBA}$, but  it is often not dense. 

The first and fundamental step of the construction of the moduli space of stable surfaces was the seminal paper \cite{KSB}, where the notion of stable surface was introduced for the first time. However the development  of the theory of moduli spaces of stable objects, including the case of varieties or pairs of any dimension, required over twenty years and the efforts of many mathematicians, Jan\'os Koll\'ar in the first place. The interested reader can find a complete account of the theory in the recent book \cite{kollar-families} by Koll\'ar. We just mention here that one of the main difficulties was finding the  correct definition of stable families  and that for the case of surfaces the stable families coincide with  the so-called $\Q$--Gorenstein deformations. 

In the past few years there has been an increasing interest  in the explicit  description of $\mathcal M_{a,b}^{\rm KSBA}$,  in particular of the closure $\ol{\mathcal M_{a,b}}$ of  $\mathcal M_{a,b}$ in $\mathcal M_{a,b}^{\rm KSBA}$, for concrete examples.
In most cases (\cite{Ra}, \cite{FPRR}, \cite{Pard}, \cite{pearlstein-etc}, \cite{rollenske-torres}, \cite{2-gorenstein}, \cite{rana-rollenske}, \cite{MNU}) the chosen examples are surfaces lying on the first or the second Noether line, namely surfaces satisfying $K^2=2p_g-4$ or $K^2=2p_g-3$. These surfaces and their moduli spaces have been described in detail by Horikawa (\cite{Ho1}, \cite{Ho2}) and for this reason are often called {\em Horikawa surfaces}. We  say that  a  Horikawa surface is {\em of the first [resp. of the second] kind} if it lies on  the first [resp. second] Noether line. 

Even in the ``smallest'' and most studied case, $K^2=1$, $p_g=2$ (the so-called I--surfaces or $(1,2)$--surfaces), a complete description of  the KSBA moduli space seems completely out of reach. 
One of the possible strategies to get around  this difficulty, used in several  of the above cited papers,  consists in restricting one's attention to the stable surfaces that have  only singularities of class  T (cf. \S \ref{ssec:Tsurf} for the definition). 
 The recent preprint \cite{MNU} contains the classification of stable Horikawa surfaces of the first kind with T--singularities only: in particular it turns out  that  for each value of  $p_g\ge 10$   there is only one family of stable surfaces with T--singularities that might give a divisor inside the closure of the Gieseker moduli space. 
 
Here we consider Horikawa surfaces of the second kind and we focus on the simplest (non canonical) T-singularity, the cyclic quotient singularity $\frac{1}{4}(1, 1)$ and, on   the T--singularities of type $\frac{1}{4\delta}(1,2\delta-1)$ (cf.  \S \ref{ssec:Tsurf} for the notation).  All these singularities have Cartier index 2,  and they all deform to $\frac 14(1, 1)$, which is expected to appear on a codimension 1 subset of  stable surfaces in the closure of the Gieseker moduli space. We take an ``asymptotic'' point of view, meaning that we are interested in the geometry of the locus of surfaces with these singularities for large enough $p_g$.

In this spirit, we  summarize our main results in Theorem A below (i.e., Theorem \ref{thm:smoothD}). Let $n$ be an integer and denote by $\mathcal D_n\subset {\mathcal M^{\rm KSBA}_{2n-1,n+2}}$ the locus of stable Horikawa surfaces with $p_g=n+1$ and $K^2=2p_g-3=2n-1$ whose only non canonical singularity is a singularity of type $\frac{1}{4\delta}(1,2\delta-1)$ for some $\delta\ge1$.

\begin{thmA}\label{thm:intro}
 If $n\ge 14$ then: 
 \begin{enumerate}
\item $\ol{\mathcal D}_n$ is a generically Cartier reduced  divisor contained  in the closure $\ol{\mathcal M_{2n-1,n+2}}$ of the Gieseker moduli space of Horikawa surfaces  with $p_g=n+1$ and $K^2=2p_g-3=2n-1$;
\item $\ol{\mathcal  D}_n$ is irreducible  if $n\equiv 2,3$ modulo $4$ and it has two irreducible components otherwise.   When $\ol{\mathcal M_{2n-1,n+2}}$ has two irreducible components (i.e., when $n\equiv 0$ modulo $4$), then $\ol{\mathcal D}_n$ intersects each of them  in an irreducible divisor.
\item If $X$ is general in a component of $\ol{\mathcal  D}_n$, then the only non canonical singularity of $X$ is of type $\frac 14(1,1)$.
\end{enumerate}
\end{thmA}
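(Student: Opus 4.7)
The strategy is to start from an explicit description of a general $X \in \mathcal{D}_n$ through its minimal resolution $\pi : \tilde X \to X$ and then derive the three claims from a parameter count and deformation-theoretic analysis. Since the T-singularity $\frac{1}{4\delta}(1,2\delta-1)$ has Cartier index $2$, the divisor $2K_X$ is Cartier and $\pi$ contracts an explicit chain of rational curves whose combinatorics is determined by $\delta$. One expects $\tilde X$ to carry a pencil of canonical type of the kind classified by Horikawa, so that $X$ is recovered from the combinatorial data of a Hirzebruch surface $\mathbb{F}_e$, a branch-type divisor, and the position of the contracted chain. This parallels the analysis carried out in \cite{MNU} for the first-kind case.

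From this description, a parameter count shows that the locus of $X$ with exactly a $\frac{1}{4}(1,1)$ singularity has codimension $1$ in $\mathcal{M}^{\mathrm{KSBA}}_{2n-1,n+2}$, while for $\delta \ge 2$ the corresponding stratum has strictly higher codimension. Combined with the standard fact that every T-singularity $\frac{1}{4\delta}(1,2\delta-1)$ admits a one-parameter $\mathbb{Q}$-Gorenstein smoothing passing through the $\frac{1}{4}(1,1)$ singularity and ultimately through smooth surfaces, this yields the divisor statement in (i), the inclusion $\overline{\mathcal D}_n \subset \overline{\mathcal M_{2n-1,n+2}}$, and (iii). The generically Cartier and generically reduced statements in (i) are equivalent to smoothness of $\mathcal{M}^{\mathrm{KSBA}}$ at a general $X \in \mathcal D_n$; this I would establish via the vanishing of the $\mathbb{Q}$-Gorenstein obstruction $T^2_{\mathrm{QG}}(X)$. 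Using the local-to-global spectral sequence, this reduces to a global $H^2$-vanishing on the partial resolution, to which Serre duality plus ampleness of a twist of $K$ should apply once $p_g \ge 15$, together with the standard vanishing of local obstructions at an index-$2$ T-singularity.

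Finally, to obtain (ii), I would compare the explicit parameter space above to Horikawa's description of $\overline{\mathcal M_{2n-1, n+2}}$, which is irreducible except when $n \equiv 0 \bmod 4$ (where it has two components). A case analysis based on how the combinatorial data describing a surface in $\mathcal D_n$ with a $\frac{1}{4}(1,1)$ singularity splits into distinct continuous families should reveal the claimed mod-$4$ alternative: for $n \equiv 2, 3 \bmod 4$ the data gives a unique such family, while for $n \equiv 0, 1 \bmod 4$ it splits into two. When $n \equiv 0 \bmod 4$, a further verification is needed to show that the two components of $\overline{\mathcal D}_n$ sit in distinct components of $\overline{\mathcal M_{2n-1, n+2}}$. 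This final matching, between the discrete invariants of the T-configuration and those distinguishing Horikawa's components, is the step I expect to be the most delicate.
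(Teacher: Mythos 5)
Your overall architecture (resolve $X$ to a first-kind Horikawa double cover of a Hirzebruch surface, stratify by the combinatorial data, count parameters, then settle smoothability and smoothness by deformation theory) matches the paper's, and your plan for part (ii) — matching top-dimensional strata against Horikawa's two components when $n\equiv 0 \bmod 4$ — is essentially what the paper does via its dimension table and the explicit degenerations of Section \ref{sec:explicit}. However, there is a genuine gap at the heart of your argument for smoothness of $\mathcal M^{\rm KSBA}_{2n-1,n+2}$ at $[X]$ (hence for the generically Cartier/reduced claims in (i) and, indirectly, for globalizing the local smoothing you invoke for (iii)): you propose to prove $\T^2_{QG}(X)=0$ by a global $H^2$-vanishing on the resolution via Serre duality and ampleness. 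This vanishing is false. The paper computes $h^2(T_X)=n-3>0$ (Theorem \ref{thm:TX}), and since $H^0(\Tc^1_{QG})$ has dimension $1+\nu$, which is much smaller than $n-3$ for large $n$, the local-to-global sequence \eqref{eq: local-to-global} forces $\T^2_{QG}\neq 0$. No twist of $K$ will rescue a Kodaira-type vanishing here; the obstruction space is genuinely nonzero.

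The missing idea is the canonical involution $\iota$. The paper shows that $H^2(T_X)$ is entirely $\iota$-\emph{anti}-invariant while $H^1(T_X)$ and $H^0(\Tc^1_{QG})$ are entirely $\iota$-invariant (the latter via the explicit local description of $\Q$-Gorenstein deformations of $\frac14(1,1)$ and $A_1$ as quotients of the index/double cover, Remark \ref{rem:sing-inv}). Splitting \eqref{eq: local-to-global} into eigenspaces then gives $(\T^2_{QG})^+=0$, so the $\iota$-equivariant deformation functor is unobstructed, and since $(\T^1_{QG})^-=0$ the forgetful map $\Def_{QG}(X,\iota)\to\Def_{QG}(X)$ is \'etale; smoothness of $\Def_{QG}(X)$ and surjectivity of $\T^1_{QG}\to H^0(\Tc^1_{QG})$ (hence smoothability and the divisor structure) follow from that, not from any vanishing of $\T^2_{QG}$ itself. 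The eigenspace computation in turn rests on the decomposition $f_*T_S(-\log(C+G))=T_Y(-\log(B+\Delta))\oplus T_Y(-\log\Delta)(-L)$ for the double cover, which is where the real work of Section \ref{sec:infinitesimal} lies. Without this equivariant mechanism your proof of (i) does not close.
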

Our approach to the description   of $\mathcal D_n$  rests on the observation that the minimal desingularization $f\colon S\to X$ of a stable  surface $X$ corresponding to a point in $\mathcal D_n$ is a smooth minimal Horikawa surface $S$ of the first kind and the  $f$-exceptional divisor $\mathfrak C$ mapping to the T-singularity is a string of smooth rational curves whose self--intersection is determined by $\delta$ (for $\delta=1$ the divisor $\mathfrak C$ is simply a $(-4)$-curve). Conversely by Artin's contractibilitiy theorem, given a surface $S$  with a divisor $\mathfrak C$ as above, one can contract $\mathfrak C$ and all the $(-2)$-curves disjoint from it obtaining a stable Horikawa surface $X$ with a T--singularity of  Cartier index 2. 

So we make a detailed study of the pairs $(S, \mathfrak C)$ (\S \ref{sec:Tchain}) and then (\S \ref{sec:Hor-sing}) we use it to give a stratification of $\mathcal D_n=\sqcup_d \mathcal D_{n,d}$, where $d$ ranges over a suitable finite set of positive integers.  In Section  \ref{sec:explicit} we describe explicitly a $\Q$--Gorenstein smoothing for the surfaces belonging to some of the strata,  showing in particular that all components of $\ol{\mathcal M_{2n-1,n+2}}$ intersect $\mathcal D_n$. We complete the proof of Theorem A by a deformation theoretic argument, based on the  computation of  tangent and obstruction space to the $\Q$-Gorenstein deformations of the general surface $X\in \mathcal D_{n,d}$ for every admissible $d$.

\smallskip

\noindent{\em Acknowledgment:} the authors wish to thank Barbara Fantechi for the  many interesting mathematical conversations  on deformation theory. 
\smallskip

\noindent{\bf Notation and conventions.} We work over the complex numbers.

For $d\ge 0$ we denote by $\F_d$ the Segre--Hirzebruch surface $\Proj(\OO_{\pp^1}(-d)\oplus \OO_{\pp^1})$. We denote by $F$ a fiber of the ruling, by $D$ the tautological section (so  $D^2=-d$,  and $D$ is uniquely determined unless $d=0$) and we set $D_0:=D+dF$. Whenever we write ``$P\in D$'' or ``$P\notin D$'' for a point $P\in \F_d$ we implicitly assume that $d>0$. 

\section{Moduli spaces and T--singularities}\label{sec:mod}

In this section we recall various (mostly) well known facts useful in the rest of the paper. 

\subsection{KSBA moduli spaces}\label{ssec:KSBA}
 In the epochal paper \cite {KSB} the authors introduced a natural compactification of the Gieseker moduli space $\mathcal M_{K^2, \chi}$  of canonical models of surfaces of general type with fixed $K^2$ and $\chi$ (see \cite {Gie}). Due to Alexeev's boundedness result  (see \cite {Ale}), 
the aforementioned compactification is represented by a projective scheme (see \cite {K}, \cite{kollar-families}). The compactification in question is obtained by embedding  $\mathcal M_{K^2, \chi}$ in the larger moduli space of the so--called \emph{stable surfaces}. 

In conclusion we have a projective \emph{KSBA moduli space} $\mathcal M_{K^2, \chi}^{\rm KSBA}$  of stable surfaces with fixed $K^2$ and $\chi$, that contains points corresponding to isomorphism classes of the canonical models of  surfaces of general type with given $K^2$ and $\chi$. 
\medskip

  Not every flat family of stable surfaces $f\colon X \to Z$ induces a morphism to the KSBA moduli space, one has to consider  {\em stable families}. The notion of stable family  is very technical, so we 
refer the reader to \cite{kollar-families} for the precise definitions  and we just recall it here in the case of  families over a reduced  base  (cf. \cite[Def.-Thm.~3.1]{kollar-families}). 
So, a flat  family   $f\colon X\to Z$, with $Z$ reduced,   is said to be stable (or a {\em $\Q$-Gorenstein deformation}) if all the fibers are stable surfaces and $X$ is $\Q$-Gorenstein, i.e.,  there exists an integer $m>0$ such that $mK_{X/Z}$ is a Cartier divisor. Note that if $Z$ is smooth then $K_{X/Z}=K_X-f^*K_Z$ and so $mK_{X/Z}$ is Cartier   if and only if   $K_X$ is.

Koll\'ar has given a very powerful numerical criterion to test stability that reads as follows in  the particular case we are interested in:
\begin{thm}[\cite{kollar-families}, Thm. 5.1]\label{thm:numerical}
Let $f\colon  X\to Z$ be a flat proper  morphism with  $Z$ reduced and irreducible, such that all the fibers $X_z$, $z\in Z$,  are stable surfaces. If the function $z\mapsto K_{X_z}^2$, $z\in Z$, is constant, then $f$ is a stable  family.
\end{thm}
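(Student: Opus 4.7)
The plan is to establish that some fixed reflexive power $\omega_{X/Z}^{[N]} := (\omega_{X/Z}^{\tensor N})^{**}$ is a line bundle on $X$; by definition of stable family, this makes $NK_{X/Z}$ Cartier, so $f$ is $\Q$-Gorenstein. The argument naturally splits into a boundedness step, a hull construction, and a base-change verification.

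First, since $Z$ is reduced and irreducible and $f$ is flat with stable surface fibers, the function $z\mapsto \chi(\OO_{X_z})$ is locally constant by cohomology and base change, hence constant on $Z$. Combined with the hypothesis that $K_{X_z}^2$ is constant, Alexeev's boundedness theorem (\cite{Ale}) places all the fibers in a single bounded family; in particular there is a uniform integer $N>0$ such that $\omega_{X_z}^{[N]}$ is invertible on every fiber $X_z$. I would then form the relative reflexive power $\omega_{X/Z}^{[N]}$ via the hulls-and-husks formalism of \cite{kollar-families}. By Nakayama's lemma, to show this sheaf is invertible on $X$ it suffices to verify (a) flatness of $\omega_{X/Z}^{[N]}$ over $Z$, and (b) that the natural comparison map $\omega_{X/Z}^{[N]}\tensor_{\OO_Z} k(z) \to \omega_{X_z}^{[N]}$ is an isomorphism for every $z\in Z$. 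Once (a) and (b) are in place, $\omega_{X/Z}^{[N]}$ is a flat family of line bundles, hence a line bundle on $X$.

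The hard step is (b), because passing to the double dual can a priori produce spurious sections supported on the codimension-two singular locus of the fibers, and such a defect is only semicontinuous in $z$. I would control this through a Hilbert polynomial argument: choose a relative polarization $\OO_X(1)$ and consider $P_z(m) := \chi(\omega_{X_z}^{[N]}(m))$. Its coefficients are polynomials in the intersection numbers $K_{X_z}^2,\ K_{X_z}\cdot H_z,\ H_z^2$ and in $\chi(\OO_{X_z})$, each of which is constant on our irreducible reduced base — the first by hypothesis, the others by standard flatness results. Thus $P_z(m)$ is independent of $z$, and the Mumford flatness criterion forces $\omega_{X/Z}^{[N]}$ to be flat over $Z$; semicontinuity of fiber length then precludes any jump in (b). The main obstacle, as I see it, is setting up the hulls-and-husks formalism carefully enough to identify the locus of good base change with the flattening stratum of $K^2$: this matching is the technical heart of Kollár's $\Q$-Gorenstein deformation theory in \cite{kollar-families}, and it is exactly what allows the bare numerical condition on $K^2$ to serve as a genuine stability criterion rather than merely a necessary condition.
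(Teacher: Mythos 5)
The paper does not prove this statement: it is quoted from Koll\'ar's book \cite{kollar-families} and used as a black box, so there is no in-paper argument to compare yours with. Measured against the actual proof in the literature, your outline follows the right route: reduce to showing that a single reflexive power $\omega_{X/Z}^{[N]}$ is invertible, get the uniform index $N$ from Alexeev boundedness, and convert the numerical hypothesis into constancy of the fiberwise Hilbert polynomials $\chi\bigl(\omega_{X_z}^{[N]}(m)\bigr)$ via Riemann--Roch. (Two small points there: stable surfaces need not be normal, so you need Riemann--Roch for a Cartier divisor on a demi-normal surface with $\Q$-Cartier canonical class --- the formula does hold but deserves a word; and the constancy of $K_{X_z}\cdot H_z$ and $H_z^2$ is itself extracted from the constancy of $\chi(\OO_{X_z}(mH))$ by the same formula, not by a separate ``standard flatness result''.)

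The genuine gap is in the flatness step. Mumford's criterion over a reduced base applies to the honest restrictions $\bigl(\omega_{X/Z}^{[N]}\bigr)\big|_{X_z}$, whereas the Hilbert polynomial you compute by Riemann--Roch is that of the fiberwise hull $\omega_{X_z}^{[N]}$; these two sheaves agree only where base change already holds, which is exactly your point (b) --- so as written the argument is circular. The mechanism that breaks the circle, which you gesture at but do not supply, is Koll\'ar's hull/husk theory: the function $z\mapsto \chi\bigl(\omega_{X_z}^{[N]}(m)\bigr)$ is constructible and semicontinuous, and the universal hull of $\omega_{X/Z}^{[N]}$ exists, is flat, and commutes with base change precisely over the locus where this function is locally constant. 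Since your Riemann--Roch computation shows it is constant on the reduced irreducible base $Z$, the universal hull is a flat family whose fibers are the line bundles $\omega_{X_z}^{[N]}$, hence is invertible, and $NK_{X/Z}$ is Cartier. With that substitution the proof closes; without it, ``semicontinuity of fiber length precludes any jump'' is an assertion rather than an argument, because the semicontinuity in question is exactly what the hull formalism establishes.
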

In order to apply the above result to a concrete family $f\colon X\to Z$ one needs  to show that for all $z\in Z$ the fibers $X_z$ are stable surfaces, in particular that they satisfy Serre condition $S_2$. The following  situation will be of interest to us here.  Let $Z$ be a smooth variety,   let $Y\to Z$ be a proper flat morphism   with reduced  fibers and $Y$ normal, and   let $\pi\colon X\to Y$ be a normal double cover such that for all $z\in Z$ no component of the fiber $Y_z$ is  contained in the branch locus of $\pi$, so that the all fibers $X_z$ of $X\to Z$ are reduced. The involution associated with $\pi$ induces a decomposition $\pi_*\OO_X=\OO_Y \oplus\Fc$, where $\Fc$ is a  divisorial sheaf, i.e., a reflexive rank 1 sheaf. The map $\pi$ is flat over a point $y\in Y$  if and only if  $\Fc$ is  invertible at $y$, in particular it is flat over the smooth locus of $Y$.
Given $z\in Z$, the morphism $\pi$ restricts to a double cover $\pi_z\colon X_z\to Y_z$,  but  the variety  $X_z$   is not automatically $S_2$, since it is possible that the restricted sheaf $\Fc|_{Y_z}$ is not  $S_2$. Since a  sufficient condition for the fibers to be $S_2$ is that $X$  be Cohen--Macaulay, 
 the following  sufficient  condition is enough for our purposes.

  \begin{lem}\label{lem:CM}  Let  $\pi\colon X\to Y$ be a double cover of normal varieties. If $Y$  has  quotient singularities then both $Y$  and $X$ are Cohen--Macaulay. \end{lem}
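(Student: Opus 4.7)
\medskip

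The plan is to exploit the local quotient presentation of the singularities of $Y$, together with two standard principles: (a) if $q\colon V\to W$ is a finite morphism of equidimensional varieties and $\Gc$ is a CM $\Oc_V$-module of full support, then $q_*\Gc$ is a CM $\Oc_W$-module (depth of a finitely generated module is preserved under finite integral extensions); and (b) a direct summand of a CM module of the same dimension is itself CM (its depth is at least that of the whole module).

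First I would prove the statement for $Y$. Working locally, write $Y=U/G$ with $U$ smooth and $G$ a finite group, and let $p\colon U\to Y$ be the quotient map. By (a) applied to $\Gc=\Oc_U$, the sheaf $p_*\Oc_U$ is a CM $\Oc_Y$-module. The Reynolds operator $s\mapsto \frac{1}{|G|}\sum_{g\in G} g\cdot s$ splits the natural inclusion $\Oc_Y\hookrightarrow p_*\Oc_U$, so (b) gives that $\Oc_Y$ is CM, i.e.\ $Y$ is CM.

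Next I would treat $X$. The involution associated with $\pi$ gives the decomposition $\pi_*\Oc_X=\Oc_Y\oplus\Fc$ recalled before the lemma, with $\Fc$ a divisorial (reflexive rank one) sheaf. Since $\pi$ is finite, $X$ is CM iff $\pi_*\Oc_X$ is CM as an $\Oc_Y$-module; because the first summand is already CM, it suffices to show that $\Fc$ is CM. Working again locally with $Y=U/G$ as above, form $\wt\Fc:=(p^*\Fc)^{\vee\vee}$, the reflexive hull of the pullback of $\Fc$ to $U$. Since $U$ is smooth and $\wt\Fc$ has rank one, $\wt\Fc$ is a $G$-equivariant line bundle, in particular CM on $U$. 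By (a), $p_*\wt\Fc$ is a CM $\Oc_Y$-module; under the standard equivalence between reflexive sheaves on $Y$ and $G$-equivariant reflexive sheaves on $U$, $\Fc$ is identified with $(p_*\wt\Fc)^G$, and the Reynolds operator then exhibits $\Fc$ as a direct summand of $p_*\wt\Fc$. Thus (b) yields that $\Fc$ is CM, completing the proof.

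The only truly non-formal point, and therefore where I expect the main friction to lie, is the identification $\Fc=(p_*\wt\Fc)^G$ and the verification that the Reynolds operator splits the inclusion $\Fc\hookrightarrow p_*\wt\Fc$; this amounts to the (standard but slightly technical) equivalence between reflexive $\Oc_Y$-modules and $G$-equivariant reflexive $\Oc_U$-modules on a quotient $Y=U/G$. In dimension two the half of the argument concerning $X$ simplifies considerably, since a rank one reflexive sheaf on any normal surface is automatically $S_2$, hence CM because its depth equals the dimension; in the relative setting of the paper, however, the total space has higher dimension and the quotient presentation as above is genuinely needed.
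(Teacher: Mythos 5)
Your proof is correct and in substance the same as the paper's. For $Y$ the paper likewise reduces (\'etale locally, via \cite[Thm. 17.5]{matsumura}) to a quotient $U/G$ with $U$ smooth and invokes the fact that the invariants of a finite group acting on a Cohen--Macaulay ring form a Cohen--Macaulay ring (\cite[Exercise 18.14]{eisenbud}), which is exactly your Reynolds-operator/direct-summand argument. For $X$ the paper packages the second half geometrically rather than module-theoretically: it forms the normalized base change $\tilde X\to U$ of $\pi$, observes that this double cover is flat because $U$ is smooth (equivalently, its divisorial sheaf --- which is your $\wt\Fc$ --- is invertible), concludes that $\tilde X$ is Cohen--Macaulay, and then descends to $X$ via the isomorphism $\tilde X/G\cong X$ and the direct-summand principle once more. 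The point you flag as the main friction, the identification $\Fc\cong(p_*\wt\Fc)^G$, is precisely what the geometric phrasing dispatches in one line: $\tilde X/G\to X$ is a finite birational morphism of normal varieties, hence an isomorphism. In your language the same normality argument closes the gap without any general theory of equivariant reflexive sheaves: the unit $\Fc\to p_*p^*\Fc$ induces a natural map $\Fc\to(p_*\wt\Fc)^G$, this map is an isomorphism in codimension one (where $\Fc$ is invertible and the projection formula applies), and both sides are reflexive $\Oc_Y$-modules, so it is an isomorphism everywhere.
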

 \begin{proof}
 The statement is local in the \'etale topology (\cite[Thm. 17.5]{matsumura}), so we may assume that all varieties are affine and that $Y=V/H$ for some  smooth variety $V$ and some  finite  group $H$. Hence $Y$ is Cohen--Macaulay by \cite[Exercise 18.14]{eisenbud}. 
 We denote by $p\colon V\to Y$ the quotient map and consider the commutative diagram:
 \begin{equation}
\xymatrix{
\tilde{X}\ar[r] \ar[d]_{\tilde \pi } &  X\ar[d]^{\pi } \\
 V \ar[r]^{p } & Y}
\end{equation}
where $\tilde \pi$ is  the double cover obtained from $\pi$ by  base change and normalization. 
The morphism $\tilde\pi$ is a flat double cover because $V$ is smooth, and therefore $\tilde X$ is Cohen--Macaulay by \cite[Exercise 18.18]{eisenbud}. The group $H$, that acts on $V$, acts also on $\tilde X$ and the  induced map $\tilde X/H\to X$ is a finite birational morphism of normal varieties, so it is an isomorphism and $X$ is  Cohen--Macaulay   by \cite[Exercise 18.14]{eisenbud} again. 
 \end{proof}

\subsection{T--singularities}\label{ssec:Tsurf} 
The simplest singularities that can occur on smoothable   stable surfaces are the so called \emph{T--singularities}. They are either Du Val singularities or cyclic quotient singularities of the form $\frac 1{\delta m^2}(1, \delta ma-1)$ with ${\rm gcd}(m,a)=1$. These singularities admit a $\mathbb Q$--Gorenstein smoothing  (see \cite {LW}) and appear in a natural way on stable degenerations of canonical surfaces (see \cite [Sect. 3]{KSB}).
 The $\mathbb Q$--Gorenstein smoothings of a T--singularity of the form $\frac 1{\delta m^2}(1, \delta ma-1)$ correspond to a  $\delta$--dimensional subset  of its versal deformation space, so 
 the expected codimension in which the singularity appears in the KSBA moduli space is $\delta$, and therefore, if it appears, this happens  in codimension at most $\delta$ (cf. \S \ref{ssec: Tdef}). T--singularities with $\delta=1$ are called \emph{Wahl singularities}, since Wahl has been the first who studied them (see \cite {W}). 
 
The minimal desingularization of a T-singularity is described via its \emph{Hirze-\\
bruch-Jung continued fraction}. Given a T--singularity of type $\frac 1{\delta m^2}(1, \delta ma-1)$, one has the continuous fraction
$$
\frac {\delta ma-1}{\delta m^2}=[e_1,\ldots, e_r]=e_1- \frac 1{e_2 - \frac  1{\ddots- \frac 1{e_r} } }.
$$
Then the exceptional divisor appearing in the minimal resolution of the T--singularity in question is a \emph{chain} of the form $C_1+\cdots +C_r$, where, for any $i\in \{1,\ldots, r\}$, $C_i$ is a $(-e_i)$--curve (i.e., a smooth irreducible curve $C_i$ of genus 0 such that $C_i^2=-e_i$), and $C_i\cdot C_j=0$ unless $|i-j|=1$, in which case $C_i\cdot C_j=1$. Such a chain is also called a \emph{T--chain of type $[e_1,\ldots, e_r]$}. We sometimes use exponential notation for repeated indices in $[e_1,\ldots, e_r]$.

Given a T--chain of type $[e_1,\ldots, e_r]$, then $[2, e _1, \ldots , e_ {r-1}, e_ r+ 1]$ and $[e_ 1 + 1, e_ 2 , \ldots , e_ r , 2]$ are also T-chains and every  T--chain can be obtained in this way starting from  either $[4]$ or $[3, 2^k, 3]$  (see \cite [Sect. 3]{KSB}). From the proof of \cite [Prop. 3.11]{KSB} one sees that $\delta$ is preserved in the recursion. Notice that 
$[4]$ corresponds to the singularity $\frac 14(1,1)$, that has $\delta=1$, whereas 
$[3, 2^k, 3]$ corresponds to the singularity $\frac 1{4(k+2)}(1, 2k+3)$, with $\delta=k+2$. 

Let $S$ be a smooth surface with a T--chain of type $[e_1,\ldots, e_r]$. Contract it to a T--singularity on a surface $S'$. Then (see \cite [Prop. 20]{L}) one has
\begin{equation}\label{eq: K2}
K^2_{S'}=K^2_S+(r-\delta+1).
\end{equation}
From this formula one sees that, in each recursion step described above, the contribution to $K^2$ increases by 1. Moreover it is immediate to check that this contribution is 1 for T-chains of type $[4]$ or $[3, 2^k, 3]$, that are therefore the only T--chains  for which the contribution to  $K^2$ is 1. 
 These are precisely the  T--singularities  with canonical sheaf  of  Cartier index $2$.  For brevity we call  them {\em  2--Gorenstein T--singularities} and call  the corresponding chains {\em 2--Gorenstein T--chains}.
 
Given a normal  surface  singularity $P\in U$ with minimal resolution   $p\colon V\to U$ and  exceptional divisor $\mathfrak C=\sum_{i=1}^rC_i\subset V$, if $K_U$ is a $\Q$--Cartier divisor  then   there are rational numbers $a_1,\dots, a_r$ such that  $K_V=p^*K_U+\sum_{i=1}^r a_iC_i$. Since the intersection matrix $(C_i\cdot C_j)$ is negative definite, the $a_i$ are uniquely determined by requiring
  \begin{equation} \label{eq:adj} 2p_a(C_j)-2=C_j^2+K_V\cdot C_j = C_j^2+C_j\cdot \sum_{i=1}^ra_iC_i,
  \end{equation}
  for $j=1, \dots,  r$. If the singularity is of   type $\frac 1{4(k+2)}(1, 2k+3)$, with $k\ge -1$, then it is immediate to check that \eqref{eq:adj} holds for $a_1=\dots=a_r=-\frac 12$, hence $K_V=p^*K_U-\frac 12 \mathfrak C$.
 
 \subsection{Stable 2--Gorenstein T--singular Horikawa surfaces of the second kind}
 
For brevity we call {\em T--singular surface} a surface whose only non canonical singularity is  a T--singularity. Here we collect some basic facts on T-singular Horikawa surfaces of the second kind that will be useful later on.  
\begin{prop} \label{prop:XS}
\begin{enumerate}
\item Let $X$ be a 2--Gorenstein stable T--singular surface and let  $S\to X$ be the minimal resolution. 
If $K^2_X>1$ and $p_g(X)>0$, then $S$ is a minimal surface of general type with $p_g(S)=p_g(X)$ and $K^2_S=K^2_X-1$. 

\item Conversely, given a minimal surface $S$ of general type with a 2-Gorenstein T--chain $\kC$, one can contract $\kC$ and all the $(-2)$--curves of $S$ disjoint from it, obtaining a stable surface  $X$ with $K^2_X=K^2_S+1$ and $p_g(X)=p_g(S)$.
\end{enumerate}
\end{prop}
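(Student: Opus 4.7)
For part (1), the plan is to combine the explicit adjunction formula $K_S = f^*K_X - \tfrac{1}{2}\kC$ (derived in \S\ref{ssec:Tsurf} for 2--Gorenstein T--singularities) with the rationality of T-- and Du Val singularities. Squaring this and using $f^*K_X \cdot \kC = 0$ together with the direct check that $\kC^2 = -4$ for both chain types $[4]$ and $[3, 2^k, 3]$ gives $K^2_S = K^2_X - 1$. Rationality yields $f_*\omega_S = \omega_X$ and $R^i f_*\omega_S = 0$, hence $p_g(S) = p_g(X)$. General type of $S$ follows from $\kappa(S) = \kappa(X) = 2$, the latter because $K_X$ is ample.

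For minimality of $S$, I argue by contradiction: any $(-1)$-curve $E \subset S$ cannot be $f$-exceptional (T-chains and Du Val configurations contain no $(-1)$-curves), so $f(E)$ is a curve with $K_X \cdot f(E) \geq \tfrac{1}{2}$ by the 2--Gorenstein hypothesis. From $K_S \cdot E = -1$ and $K_S = f^*K_X - \tfrac{1}{2}\kC$ one derives $\kC \cdot E \geq 3$. Letting $\pi\colon S \to S_{\min}$ be the contraction to the minimal model and writing $K_S = \pi^*K_{S_{\min}} + F$ with $F$ the effective $\pi$-exceptional discrepancy divisor, the projection formula gives $K_{S_{\min}} \cdot \pi_*\kC = 2 - F \cdot \kC$; since $K_{S_{\min}}$ is nef and $\pi_*\kC$ is effective, this is $\geq 0$, hence $F \cdot \kC \leq 2$. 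But $E$ appears in $F$ with positive multiplicity and $E \cdot \kC \geq 3$; a careful case analysis (tracking whether any T-chain components might themselves become $(-1)$-curves during $\pi$, noting that interior $(-2)$-components have zero intersection with $\kC$ and that the multiplicity of $E$ in $F$ grows by $1$ each time a subsequent contracted curve passes through the image of $E$) shows $F \cdot \kC \geq 3$, giving the contradiction.

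For part (2), the intersection matrix of $\kC$ with the disjoint $(-2)$-configurations is block-diagonal and negative definite, so Artin's contractibility theorem yields a normal surface $X$. The equalities $K^2_X = K^2_S + 1$ and $p_g(X) = p_g(S)$ follow again from $K_S = f^*K_X - \tfrac{1}{2}\kC$ and rationality. For stability, $X$ has klt (hence slc) singularities and $K_X$ is $\Q$-Cartier of index $2$; ampleness of $K_X$ reduces by Nakai--Moishezon to $K^2_X > 0$ (clear since $K^2_X = K^2_S + 1 \geq 2$) and $K_X \cdot C' > 0$ for every irreducible $C' \subset X$. Taking $C \subset S$ the strict transform of $C'$, one has $K_X \cdot C' = K_S \cdot C + \tfrac{1}{2}\kC \cdot C$: minimality of $S$ gives $K_S \cdot C \geq 0$, and if $K_S \cdot C = 0$ then $C$ is a $(-2)$-curve, which by hypothesis must meet $\kC$ (else it would have been contracted among the disjoint ones), giving $\kC \cdot C \geq 1$ and $K_X \cdot C' \geq \tfrac{1}{2} > 0$. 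The hardest step is the minimality argument in (1), where the bookkeeping of the discrepancy divisor $F$ under the successive blow-downs is delicate.
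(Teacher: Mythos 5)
Your part (ii) and the computation of the invariants in part (i) are correct and essentially follow the paper's own route: adjunction $K_S=f^*K_X-\tfrac12\kC$ together with rationality of quotient singularities gives $K^2_S=K^2_X-1$ and $p_g(S)=p_g(X)$, and stability in (ii) is checked exactly as in the paper by testing $f^*K_X=K_S+\tfrac12\kC$ against irreducible curves. The one place where you genuinely diverge from the paper is the minimality of $S$ in part (i), and that is also where your argument has a real gap. The paper disposes of minimality in one line by quoting \cite[Thm.~1.1]{Ra}: if $W$ is the minimal model of $S$, then $K^2_X>K^2_W\ge K^2_S=K^2_X-1$, forcing $K^2_W=K^2_S$, i.e.\ $S=W$.

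Your direct argument correctly establishes $\kC\cdot E\ge 3$ for every $(-1)$-curve $E$ and correctly derives $F\cdot\kC\le 2$ from nefness of $K_{S_{\min}}$ (using $K_S\cdot\kC=2$). The gap is the claimed lower bound $F\cdot\kC\ge 3$. Writing $F=\sum a_i\Gamma_i$, the term $a_E(E\cdot\kC)\ge 3$ is fine, but the remaining terms are nonnegative only for those $\Gamma_i$ that are \emph{not} components of $\kC$. If a component of $\kC$ is $\pi$-exceptional, it enters $F$ with positive multiplicity and pairs negatively with $\kC$: a single $(-4)$-curve $C$ has $C\cdot\kC=-4$, and the end components of a chain of type $[3,2^k,3]$ have $C_0\cdot\kC=C_{k+1}\cdot\kC=-2$. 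The two phenomena you track (interior $(-2)$-components, growth of the multiplicity of $E$ in $F$) are not the problem and do not compensate these losses. Concretely, in type $[4]$, if $C$ itself were $\pi$-exceptional, then decomposing $F$ into total transforms $\mathcal E_j$ of the successive exceptional curves only yields $F\cdot C\ge 3-1=2$ (and $\pi_*\kC=0$ forces $F\cdot\kC=2$ exactly), so no contradiction arises. To close the argument one must additionally show that no component of $\kC$ with negative pairing against $\kC$ is $\pi$-exceptional; this is doable --- e.g.\ a $(-1)$-curve $E$ and a $(-4)$-curve $C$ with $E\cdot C\ge 3$ span an indefinite rank-two sublattice, so they cannot both lie in a single negative definite fibre $\pi^{-1}(q)$, and the chain case needs a similar but separate check --- but it is an extra idea, not bookkeeping. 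As written, the step you yourself identify as the hardest is asserted rather than proved; either supply this analysis or replace it by the one-line appeal to Rana's inequality as the paper does.
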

\begin{proof}
(i) The invariants of $S$ are $K^2_S=K^2_X-1>0$, by \eqref{eq: K2},  and $p_g(X)=p_g(S)>0$ because quotient singularities are rational. So $S$ is of general type. If we denote  by  $W$ the minimal model of $S$,   \cite[Thm.~1.1]{Ra} gives $K^2_X>K^2_W\ge K^2_S=K^2_X-1$, namely $K^2_S=K^2_W$  and $S=W$ is minimal.
\medskip

(ii) By \cite[Thm.~2.3]{artin} there is a  morphism $f\colon S\to X$, contracting to normal singularities precisely  $\kC$ and the $(-2)$--curves of $S$ disjoint from it. Since  $f^*K_X=K_S+\frac 12 \kC$ (cf. \S \ref{ssec:Tsurf}),  in order to prove that $X$ is stable it is enough to prove  that  $f^*K_X$ is nef and big  and  that an irreducible curve has zero intersection with   $f^*K_X$  if and only if  it  is $f$--exceptional. 
Note first of all that $f^*K_X$ is big since it is the sum of $K_S$ and of an effective divisor.
Next  let $\theta$ be an irreducible curve of $S$.  If  $\theta$ is a component of $\mathfrak C$, then  one can check directly that $\theta \cdot (K_S+\frac 12 \kC)=0$ 
 using the fact that $\mathfrak C$ is a string of type $[4]$ or $[3, 2^k,3]$. If $\theta$ is not a component of $\mathfrak C$,  then both $\theta \cdot K_S $ and $\theta \cdot\mathfrak C$ are non-negative, since $K_S$ is nef;  in addition if $\theta\cdot (K_S+\frac 12 \kC)=0$, then $\theta \cdot K_S =\theta \cdot\mathfrak C=0$, namely $\theta$ is a $(-2)$--curve disjoint from $\mathfrak C$.
\end{proof}
The next result  will be needed in \S \ref{sec:Hor-sing}.
 \begin{lem}\label{lem:can-ring}
  Let $S$ be a  minimal surface of general type with a  2--Gorenstein  T--chain  $\kC$  and let $X$ be the    stable  surface  obtained by contracting $\kC$ and the $(-2)$--curves disjoint from $\kC$. Set $L:=2K_S+\kC$.  Then:
 \begin{enumerate} 
 \item  $h^0(mL)= \chi(mL)$ for all $m\in \mathbb N_{>0}$
 \item $X\cong \Proj(\oplus_{m\ge 0}H^0(mL))$
   \item the system $|L|$ is free
 \end{enumerate}
 \end{lem}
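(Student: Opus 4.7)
The plan rests on identifying the cohomology on $S$ with that on $X$. The key input is the identity $L = 2K_S + \kC = 2 f^*K_X$ as Cartier divisors, which follows from the formula $f^*K_X = K_S + \tfrac{1}{2}\kC$ established at the end of \S\ref{ssec:Tsurf}. Since T--singularities are rational quotient singularities, $f_*\OO_S = \OO_X$ and $R^i f_*\OO_S = 0$ for $i > 0$, so by the projection formula and Leray one obtains the master identification $H^i(S, mL) \cong H^i(X, \OO_X(2mK_X))$ for all $i \ge 0$ and $m \ge 1$, which is the common engine behind all three items.

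For (i), I would apply Kawamata--Viehweg vanishing directly on $S$ via the $\Q$--divisor decomposition
\[
mL - K_S = (2m-1)f^*K_X + \tfrac{1}{2}\kC.
\]
The $\Q$--divisor $(2m-1)f^*K_X$ is nef and big (being the pullback of an ample $\Q$--Cartier divisor by a birational morphism from a smooth surface); its fractional part equals exactly $\tfrac{1}{2}\kC$, which is supported on the SNC chain $\kC$ with coefficients in $[0,1)$; and a direct check shows $\lceil (2m-1)f^*K_X\rceil = (2m-1)K_S + m\kC$, so that $K_S + \lceil (2m-1)f^*K_X\rceil = mL$. Kawamata--Viehweg then gives $H^i(S, mL) = 0$ for $i \ge 1$ and $m \ge 1$, hence $h^0(mL) = \chi(mL)$.

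For (ii), the cohomology identification shows $\bigoplus_m H^0(S, mL)$ is the second Veronese subring of the canonical ring $R(X) = \bigoplus_n H^0(X, \OO_X(nK_X))$. Since $X$ is stable and $K_X$ is ample, $X \cong \Proj R(X)$, and since $\Proj$ is preserved under passing to Veronese subrings, I conclude that $X \cong \Proj \bigoplus_m H^0(S, mL)$.

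For (iii), the approach is to split the analysis between $S \setminus \kC$ and $\kC$. Since $X$ is Horikawa of the second kind, Proposition~\ref{prop:XS}(i) makes $S$ a Horikawa surface of the first kind, so Horikawa's theorem yields $|K_S|$, and hence $|2K_S|$, base-point-free; the subsystem $|2K_S| + \kC \subset |L|$ then has base locus exactly $\kC$, taking care of everything outside $\kC$. On $\kC$, each component $C_i$ satisfies $L \cdot C_i = 2 f^*K_X \cdot C_i = 0$, and as $\kC$ is a tree of smooth rational curves this forces $L|_{\kC} \cong \OO_{\kC}$; the exact sequence $0 \to \OO_S(2K_S) \to \OO_S(L) \to \OO_{\kC} \to 0$ combined with the vanishing $H^1(S, 2K_S) = 0$ (Kawamata--Viehweg applied to the nef and big $K_S$) then lifts the constant section of $L|_{\kC}$ to a section of $L$ nowhere vanishing on $\kC$. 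The main obstacle in this plan is the base-point-freeness of $|K_S|$: it is the only step where the special geometry of Horikawa surfaces of the first kind enters, rather than purely formal arguments, and without that hypothesis one would have to invoke a Reider--type substitute together with bounds on $K_S^2$.
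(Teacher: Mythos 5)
Your argument coincides with the paper's own proof: (i) is Kawamata--Viehweg applied to the adjoint decomposition $mL=K_S+\lceil (2m-1)f^*K_X\rceil=K_S+(2m-1)(K_S+\tfrac12\kC)+\tfrac12\kC$, (ii) uses rationality of T--singularities to identify $\oplus_m H^0(mL)$ with the second Veronese of the canonical ring of $X$, and (iii) uses triviality of $L|_{\kC}$ together with the restriction sequence and $H^1(2K_S)=0$. The only cosmetic difference is that you justify the freeness of $|2K_S|$ via Horikawa's structure theorem (importing a hypothesis not present in the lemma's statement), whereas the paper simply invokes the standard base-point-freeness of the bicanonical system for the minimal surface $S$.
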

 \begin{proof} Let $f\colon S\to X$ be the contraction morphism. Since $K_X$ is ample, $L=f^*(2K_X)$ is nef and  big.  The line bundle $mL$ is the adjoint of    
 $(2m-1)K_S+m\kC=(2m-1)(K_S+\frac 12 \kC)+\frac 12 \kC$, so  (i) follows  by Kawamata--Viehweg vanishing. Since T--singularities are rational, the pull-back map $f^*\colon H^0(2mK_X)\to H^0(mL)$ is an isomorphism for all $m\ge 0$ and  therefore $X= \Proj\left(\oplus_{m\ge 0}H^0(2mK_X)\right)\cong  \Proj\left(\oplus_{m\ge 0}H^0(mL)\right)$ and (ii) is proven.
 
 To prove  (iii) note that $L|_{\kC}$ is the trivial sheaf, since it has degree 0 on all components of $\kC$ and $\kC$ is 1--connected. Since $|2K_S|$ is free, the exact sequence 
 $$0\to H^0(2K_S)\to H^0(L)\to H^0(L|_{\kC})\to H^1(2K_S)=0$$
shows that $|L|$ is also free. 
  \end{proof}

\begin{rem}\label{rem: XS}
By Proposition \ref{prop:XS}, (i)  there is no  stable Horikawa surface of the first  kind  (i.e., with $K^2=2p_g-4$)  whose only non canonical singularity is a 2--Gorenstein T--singularity, since its minimal desingularization  would  be a minimal surface of general type (cf. Proposition \ref{prop:XS}) contradicting Noether inequality.  In addition, the stable Horikawa surfaces of the second  kind  (i.e., with $K^2=2p_g-3$)  with $K^2>1$ and  a 2--Gorenstein T--singularity are  naturally in bijection with classical Horikawa surfaces of the first  kind containing a 2--Gorenstein T--chain. 
\end{rem}

\subsection{First order  deformations of T--singular stable  surfaces}\label{ssec: Tdef}
We briefly recall some  facts on infinitesimal  $\Q$-Gorenstein deformations  (called KSB deformations in \cite[\S~6.6]{kollar-families}) of normal stable surfaces with at most T-singularities that we will need later. A nice account of the topic can be found in \cite[\S 3, \S 4]{hacking}, while \cite[\S 6.6]{kollar-families} contains an exhaustive treatment of the local deformation theory of cyclic quotient singularities.
\medskip

Let $X$ be a normal stable surface whose singular points  are T-singularities. 
Denote by $\mathbb T_{QG}^1$ the space of first order $\Q$-Gorenstein deformations of $X$,  by $\T^2_{QG}$ the corresponding obstruction space and by  $\Tc^1_{QG}$ the sheaf  of   first order local $\Q$-Gorenstein deformations, which is a coherent sheaf supported on the singularities of $X$. So $H^i(\Tc^1_{QG})=0$ for $i>0$.  In addition the local obstruction sheaf $\Tc^2_{QG}$ is known to be zero for T-singularities and there is 
a  local-to-global exact sequence:
\begin{equation}\label{eq: local-to-global}
0\to H^1(T_X)\to\T^1_{QG}\to H^0(\Tc^1_{QG})\to H^2(T_X)\to \T^2_{QG}\to 0,
\end{equation}
where $T_X:=\Hom(\Omega^1_X,\OO_X)$ is the \emph{tangent sheaf} of $X$. 

To compute $\T^1_{QG}$ and $\T^2_{QG}$ in concrete cases we need to understand the remaining terms in \eqref{eq: local-to-global}. 
First we consider  the cohomology of $T_X$. The computation can be reduced to a computation on a smooth surface thanks to the following result, whose proof is very similar to that of \cite[Thm.~2]{lee-park}:

\begin{prop}\label{prop:Tlog}
 Let $X$ be a normal surface with T-singularities only, 
let $f\colon S\to X$ be the minimal resolution and let $E$ be the reduced exceptional divisor. Then
 for $i=0,1,2$ there is an isomorphism $H^i(T_X)\cong H^i(T_S(-\log E))$.
\end{prop}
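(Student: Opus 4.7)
The plan is to establish two local assertions at each T-singular point of $X$ and then apply the Leray spectral sequence. The assertions are
\begin{equation*}
f_*T_S(-\log E)\cong T_X\quad\text{and}\quad R^if_*T_S(-\log E)=0\text{ for }i>0.
\end{equation*}
Together with the Leray spectral sequence for $f$ applied to $T_S(-\log E)$, they immediately yield the desired isomorphisms $H^i(T_X)\cong H^i(T_S(-\log E))$ for every $i\ge 0$.

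For the pushforward identification I would argue by reflexivity. Both sheaves are reflexive on the normal surface $X$: the sheaf $T_X=\Hom(\Omega^1_X,\OO_X)$ by construction, and $f_*T_S(-\log E)$ because $T_S(-\log E)$ is locally free (each T-chain is an SNC divisor on the smooth surface $S$, as its components are smooth rational curves meeting transversally at nodes) and $f$ is a proper birational morphism between normal surfaces. Away from the finitely many singular points of $X$, $f$ is an isomorphism and $E$ is empty, so both sheaves restrict to the tangent sheaf of $X^{\mathrm{sm}}$; since $X\setminus X^{\mathrm{sm}}$ has codimension two in $X$, reflexivity forces $f_*T_S(-\log E)\cong T_X$.

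The vanishing $R^if_*T_S(-\log E)=0$ for $i>0$ is the technical heart of the proof: it is a local statement at each T-singularity, and I would follow the strategy used by Lee--Park in their proof of \cite[Thm.~2]{lee-park}. \'Etale-locally, each T-singularity is a cyclic quotient $V/H$ with $V$ smooth, and there is a natural comparison --- via the ramified cyclic cover $V\to V/H$ and its equivariant resolution --- between $T_S(-\log E)$ and the $H$-invariants of the log tangent sheaf of a smooth model. This reduces the vanishing to cohomology statements on the smooth exceptional chain, which can then be handled using the residue sequence
\[
0\to T_S(-\log E)\to T_S\to \bigoplus_j N_{E_j/S}\to 0,
\]
the rationality of T-singularities (so that $R^if_*\OO_S=0$ for $i>0$), and the explicit self-intersections of the components of $E$. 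The main obstacle is precisely this vanishing: it is not a formal consequence of rationality of $X$, and relies on the structural properties of T-chains recalled in \S\ref{ssec:Tsurf}, namely that every T-chain arises inductively from $[4]$ or $[3,2^k,3]$, allowing an inductive verification of the cohomology vanishing along the exceptional divisor.
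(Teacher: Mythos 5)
Your overall architecture coincides with the paper's: establish $f_*T_S(-\log E)\cong T_X$ and $R^jf_*T_S(-\log E)=0$ for $j>0$, then apply the Leray spectral sequence; and for the $R^1$-vanishing you, like the paper, ultimately defer to Lee--Park (the paper simply cites \cite[Lem.~1]{lee-park}), which is acceptable. The genuine gap is in your identification of the pushforward. You assert that $f_*T_S(-\log E)$ is reflexive ``because $T_S(-\log E)$ is locally free and $f$ is a proper birational morphism between normal surfaces''. That general principle is false: for the blow-up $f\colon S\to\A^2$ of a point, with exceptional curve $E$, one has $f_*\OO_S(-E)=\mathfrak m_0$, which is torsion-free but not reflexive. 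In the situation at hand, reflexivity of $f_*T_S(-\log E)$ is equivalent to the assertion that every germ of a vector field on $X$ near a singular point (equivalently, on the punctured neighbourhood, since $T_X$ is reflexive) lifts to a logarithmic vector field on $f^{-1}(U)$ --- which is essentially the statement you are trying to prove, so the argument is close to circular. What comes for free is only torsion-freeness, hence an inclusion $f_*T_S(-\log E)\hookrightarrow T_X$ with cokernel supported on points; ruling out a nonzero cokernel is exactly the nontrivial content.

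The paper closes this gap with two inputs your proposal does not use: (a) the residue sequence $0\to T_S(-\log E)\to T_S\to\oplus_i\OO_{E_i}(E_i)\to 0$, which pushes forward to an isomorphism $f_*T_S(-\log E)\to f_*T_S$ because $E_i^2<0$ forces $f_*\OO_{E_i}(E_i)=H^0(\OO_{E_i}(E_i))=0$; and (b) the Burns--Wahl isomorphism $f_*T_S\cong T_X$ of \cite[(1.0)]{burns-wahl}, which encodes the genuinely non-formal fact that vector fields do lift to the minimal resolution of these quotient (hence rational) singularities. Replacing your reflexivity paragraph by (a) and (b) makes the proof correct, and it then coincides with the paper's.
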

\begin{proof} Note that $E$ is a simple normal crossing divisor, hence $T_S(-\log E)$ is a locally free sheaf and, 	
 writing $E=E_1+\dots +E_r$ with $E_i$ irreducible, we have a short exact sequence 
$$0\to T_S(-\log E)\to T_S\to \oplus_i\OO_{E_i}(E_i)\to 0.$$
Since $E_i^2<0$ for every $i$, pushing forward to $X$ the above sequence induces an isomorphism $f_*T_S(-\log E)\to f_*T_S$. On the other hand by \cite[(1.0)]{burns-wahl} there is a natural isomorphism $f_*T_S\to T_X$ and therefore we obtain  a natural isomorphism $f_*T_S(-\log E)\to T_X$. By \cite[Lem.~1]{lee-park} the higher derived functors $R^jf_*T_S(-\log E)$ vanish for $j>0$, and so $H^i(T_X)=H^i(f_*T_S(-\log E))$ is isomorphic to $H^i(T_S(-\log E))$ for all $i$.
\end{proof}
We describe local deformations and $\Tc^1_{QG}$ only in the two cases that are of interest to us here.\smallskip

 \underline{Singularity of type $A_1$.} In local coordinates this singularity has equation $z^{2}-xy=0$. The miniversal family $\mathcal X\to \A^1$  is the hypersurface $z^{2}-xy+t=0$, where $t\in \A^{1}$. So all deformations of this singularity are Gorenstein and the stalk of $\Tc^1_{QG}$ at the singular point is a $1$-dimensional complex vector space. Note that the involution $\tau$ defined by $(x,y,z)\mapsto (x,y,-z)$ acts on $A_1$ and on $X$, inducing the identity on $\A^1$. \smallskip
 
\underline{$\frac 14(1,1)$--singularity.} It is easy to check that  a T--singularity of type $\frac 14(1,1)$,  is the quotient of the $A_1$ singularity $z^{2}-xy=0$ by the involution $\sigma\colon (x,y,z)\mapsto(-x,-y,-z)$. The quotient map is the so-called \emph{index cover} of the T--singularity. The local $\Q$--Gorenstein deformations of the T--singularity are induced by the deformations of the index cover that are $\sigma$-invariant, namely by those of the form $z^{2}-xy+t=0$. So the $\Q$--Gorenstein deformations correspond to a $1$-dimensional subspace of the miniversal deformation and  the stalk of $\Tc^1_{QG}$ at the singular point is a complex vector space of dimension $1$.
\smallskip

The following observation  will be useful in \S \ref{sec:infinitesimal}.
\begin{rem}\label{rem:sing-inv}
Let $G$ be the group of automorphisms of the $A_1$-singularity $z^{2}-xy=0$ generated by the  involutions $\sigma$ and  $\tau$ defined above. Taking the quotient by $G$ one gets again an $A_1$-singularity $w^{2}-uv=0$, where the quotient map is given by $u=x^2,v=y^2, w=z^2$ and is branched on $w=0$. We have seen that taking the quotient by $\sigma$ only we get the T--singularity of type $\frac 14(1,1)$. So, if we denote by $\bar \tau$ the involution of the T--singularity induced by $\tau$, taking the quotient by $\bar \tau$ represents the  singularity $\frac 14(1,1)$ as a double cover of $w^{2}-uv=0$ branched on $w=0$. The above explicit description of the $\Q$--Gorenstein smoothings shows that all $\Q$-Gorenstein deformations of the singularity $\frac 14(1,1)$ are $\bar\tau$--invariant and, as a consequence, $\bar\tau$ acts trivially on $\Tc^1_{QG}$.
\end{rem}

\section{Horikawa surfaces}\label{sec:gen}

In this section we  recall some of Horikawa's results in \cite {Ho1, Ho2} on minimal surfaces of general type with $K^2=2p_g-4$ [resp.  $K^2=2p_g-3$], that,  as we said in the Introduction, we call Horikawa surfaces of the first [resp. of the second]  kind.
 We also define a stratification of the  moduli space of the canonical models of the Horikawa surfaces of the second kind.

\subsection{Horikawa surfaces of the  first kind}\label{ssec:one}

Let us start with Horikawa surfaces of the  first kind. For these surfaces we have  $p_g\geq 3$ and the irregularity is $q=0$.  Throughout the paper we set $p_g=n+1$. 

The basic information about these surfaces (see \cite [Lemma 1.1]{Ho1}) is that the canonical system $|K|$ is base point free, not composed with a pencil and the canonical map $\varphi_K: S\longrightarrow \pp^n$ is a 2:1 morphism onto a surface of minimal degree $n-1$ in $\pp^n$. Hence $S$ has an involution $\iota: S\longrightarrow S$ (called the \emph{canonical involution}) with which the canonical map $\varphi_K$ is composed. 

The surfaces of minimal degree in $\pp^n$ (with $n\geq 2$) are (see \cite {Na}):\\ \begin{enumerate}
\item [(i)] the plane $\pp^2$ for $n=2$;
\item [(ii)] the 2--Veronese surface of the plane for $n=5$;
\item [(iii)] rational normal scrolls of degree $n-1$ in $\pp^n$ for $n\geq 3$; these are in turn: 
\begin{itemize}
\item [(iii.a)] smooth rational normal scrolls, i.e., the  isomorphic image $S_{d,n}$ in $\pp^n$ of the  Hirzebruch surface $\F_d$ via the linear system $|D + \frac {n - 1 + d}2 F|$ \footnote{Recall the notation introduced at the end of the Introduction. Note that the ruling of $S_{d,n}$ is uniquely determined unless $d=0, n=3$.},  $n - 1 + d$ is even and  $d\leq n-3$;
\item [(iii.b)] cones over rational normal curves,  that are the images  $S_{n-1,n}$ in $\pp^n$ of a Hirzebruch surface $\F_{n-1}$ via the linear system $|D + (n - 1) F|$.
\end{itemize}
\end{enumerate}

Note that in case (iii.a) the curve $D$ on $S_{d,n}$ has degree
$$
D\cdot \Big (D + \frac {n - 1 + d}2 F\Big )=\frac {n-d-1}2\geq 1,
$$
and the equality holds if and only if $d=n-3$.

It is a result by Horikawa (see \cite [Thm. 1.6, (iv)]{Ho1}) that the image of a Horikawa surface of the   first kind can be a cone 
 or a surface of type (i) or (ii)
only if $p_g\leq 6$. Since in this paper we will not be interested in Horikawa surfaces with limited invariants, 
 we will focus on Horikawa surfaces of the  first kind whose canonical image is a smooth rational normal scroll $S_{d,n}$ with $d\leq n-3$.   Horikawa defines them to be  \emph{of type $(d)$} if the canonical image is of type (iii.a) above.  For these surfaces Horikawa gives  the following description:

\begin{thm}\label{thm:Ho1a} 
Let $S$ be a Horikawa surfaces of the  first kind with $p_g=n+1\geq 4$ of type $(d)$. Then:
\begin{enumerate}
\item $n -  d$ is odd and  $n\geq \max\{d+3,2d-3\}$;
\item the canonical model of $S$  is a double cover of a Hirzebruch surface $\bar S\to \F_d$ branched along a reduced divisor $B$ in the linear system $|6D+(n+3+3d)F|$ with at most irrelevant singularities.
\end{enumerate}
\end{thm}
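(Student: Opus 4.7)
The plan is to use the Stein factorization of the canonical map to realize the canonical model $\bar S$ as a finite double cover of $\F_d$, and then to determine the class of the branch divisor via the standard double-cover formulas; the parity and inequalities in (i) will emerge as necessary conditions for this construction to make sense.

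I would first address (ii). Since $\varphi_K\colon S\to S_{d,n}$ is generically $2$-to-$1$, its Stein factorization (equivalently, the morphism contracting the $(-2)$-curves of $S$) produces the canonical model $\bar S$ together with a finite double cover $\pi\colon \bar S\to \F_d$, where $\F_d$ is identified with $S_{d,n}$ via $|D+\frac{n-1+d}{2}F|$. Writing $\pi_*\OO_{\bar S}=\OO_{\F_d}\oplus L^{-1}$ with branch divisor $B\in|2L|$, the usual formulas give $K_{\bar S}=\pi^*(K_{\F_d}+L)$. Comparing with the expression $K_{\bar S}=\pi^*(D+\frac{n-1+d}{2}F)$ coming from the embedding of $\F_d$ into $\pp^n$, and using $K_{\F_d}=-2D-(d+2)F$, one solves $L=3D+\frac{n+3d+3}{2}F$, whence $B\in|6D+(n+3d+3)F|$.

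For (i): the linear system $|D+\frac{n-1+d}{2}F|$ makes sense only when $n-1+d$ is even, i.e., when $n-d$ is odd; equivalently, $L$ is a well-defined line bundle on $\F_d$ exactly when $n+3d+3$ is even. The inequality $n\geq d+3$ is built into the definition of a scroll of type (iii.a). The remaining bound $n\geq 2d-3$ I would derive from the reducedness of $B$: since $D^2=-d<0$, a divisor in $|6D+(n+3d+3)F|$ may contain $D$ as a fixed component, but reducedness of $B$ (forced by normality of $\bar S$) allows this multiplicity to be at most $1$. Writing $B=kD+B'$ with $k\in\{0,1\}$ and $B'$ not containing $D$, intersecting with $D$ gives $B'\cdot D=(n-3d+3)+kd\geq 0$, and the worst case $k=1$ yields precisely $n\geq 2d-3$.

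The main technical point is to show that $B$ has at most irrelevant singularities. Since $\bar S$ is the canonical model of a smooth surface of general type, its singularities are Du Val, and the classical correspondence between the singularities of the branch locus of a normal double cover and those of the cover itself translates this into the statement that $B$ has at most simple (``negligible'', in Horikawa's language) singularities. This local analysis of the double cover, requiring one to rule out multiple components and to classify the allowed planar singularities on $B$, is where the main effort would lie; the global formulas above then follow in a rather formal manner once the class of $B$ is determined and the nature of its singularities is controlled.
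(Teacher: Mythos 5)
This statement is quoted by the paper from Horikawa (\cite{Ho1}) and is given no proof in the text, so there is no internal argument to compare against; your reconstruction is, in substance, the classical one and I find no gap in it. The chain Stein factorization $=$ canonical model $\bar S$, the splitting $\pi_*\OO_{\bar S}=\OO_{\F_d}\oplus L^{-1}$, the comparison of $K_{\bar S}=\pi^*(K_{\F_d}+L)$ with $K_{\bar S}=\pi^*\bigl(D+\tfrac{n-1+d}{2}F\bigr)$ to get $L=3D+\tfrac{n+3d+3}{2}F$, and the derivation of the parity of $n-d$ and of $n\ge 2d-3$ from integrality of $L$ and from $(B-D)\cdot D=n+3-2d\ge 0$ all check out (the latter matches the paper's own remark following the theorem). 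Two small points you should make explicit rather than leave implicit: first, the double-cover formulas require $\pi$ to be flat, which holds because $\bar S$ has only Du Val singularities, hence is Cohen--Macaulay, and $\F_d$ is smooth (miracle flatness); second, the final step really is the classical dictionary ``\,$\bar S$ normal with at worst rational double points $\Longleftrightarrow$ $B$ reduced with at worst negligible singularities\,'' (as in \cite[III.7]{bhpv}), which you correctly flag as the only genuinely local piece of work. With those two references supplied, the argument is complete and agrees with Horikawa's original derivation.
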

\begin{rem}
The branch divisor $B$ is in general irreducible if $D\cdot B=n+3-3d\ge 0$. If $n+3-3d< 0$ then $D$ splits off $B$ with multiplicity 1 (recall that $n\ge 2d-3$ by assumption). In this case $B$ has in genral $\nu:=n+3-2d$ nodes occurring at the intersection of $D$ with $B-D$.
\end{rem}

We denote by  $\mathcal M_{n}^{{\rm Hor,1}}$ the image in the Gieseker moduli space  $\mathcal M_{2n-2, n+2}$ of the Horikawa surfaces  of type (d) with $p_g=n+1$.  Horikawa proves the following:

\begin{thm}\label{thm:Ho1b}
\begin{enumerate}
\item $\mathcal M_{n}^{{\rm Hor,1}}$ is a union of irreducible components of  \\ $\mathcal M_{2n-2, n+2}$ and for $n\ge 6$ it is equal to $\mathcal M_{2n-2, n+2}$;
\item if $K^2=2n-2$ is not divisible by 8, then $\mathcal M_{n}^{{\rm Hor,1}}$ is irreducible of dimension $7n+21$;\item if $K^2=2n-2=8k$ and $k\ge 2$, then $\mathcal M_{n}^{{\rm Hor,1}}$ has two disjoint irreducible  components, both of dimension $28(k+1)=7n+21$:  one, $\mathcal M_{4k+1}^{\rm Hor,1a}$,
 containing the  points corresponding to the surfaces of type $(d)$ with $d$ even and $0\leq d\leq 2k$, the other, $\mathcal M_{4k+1}^{\rm Hor,1b}$, containing the points corresponding to the surfaces of type $(2k+2)$. 
\end{enumerate}
\end{thm}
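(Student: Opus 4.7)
The plan is to exploit the description from Theorem \ref{thm:Ho1a}: a Horikawa surface of type $(d)$ is a double cover of $\F_d$ branched along a reduced divisor $B \in |6D + (n+3+3d)F|$ with at most negligible singularities. For each admissible $d$ (that is, $n-d$ odd, $d \leq n-3$, and $n \geq 2d-3$) I would form the irreducible parameter space $\Vc_d$ of such pairs $(\F_d, B)$ modulo $\Aut(\F_d)$; this is an open subset of the projective space $|6D + (n+3+3d)F|$, hence irreducible.

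The dimension count proceeds by Riemann--Roch: from $B^2 = 12n + 36$ and $B \cdot K_{\F_d} = -2n - 18$ one gets $\chi(B) = 7n + 28$. For $d \leq (n+4)/3$ the class $B$ has no fixed component and a standard vanishing argument gives $h^0(\F_d, B) = 7n + 28$, so that
$$\dim \Vc_d = (7n + 27) - \dim \Aut(\F_d) = 7n + 22 - d \qquad (d \geq 1),$$
and $\dim \Vc_0 = 7n + 21$. For the maximal admissible value $d_{\max} = (n+3)/2$ (which is admissible only when $n \equiv 1 \pmod 4$) one has $B \cdot D = -d_{\max} < 0$, so $|B| = D + |B'|$ with $B' \in |5D + (n+3+3d_{\max})F|$ disjoint from $D$; a direct section count via $\pi_* \OO_{\F_{d_{\max}}}(B')$ yields $h^0(B') = 30k + 36$ for $n = 4k+1$, hence $\dim \Vc_{d_{\max}} = (30k+35) - (2k+7) = 28(k+1) = 7n + 21$. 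Thus the strata of maximal dimension $7n+21$ are $\Vc_{d_{\min}}$ (with $d_{\min} = 0$ for $n$ odd and $d_{\min} = 1$ for $n$ even) together with $\Vc_{d_{\max}}$ when $n \equiv 1 \pmod 4$.

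Next I would invoke the classical deformation $\F_{d-2} \rightsquigarrow \F_d$ to show that every $\Vc_d$ with $d_{\min} \leq d < d_{\max}$ lies in the closure of $\Vc_{d_{\min}}$: a flat family of Hirzebruch surfaces with generic fiber $\F_{d-2}$ and special fiber $\F_d$ supports a compatible family of branch divisors, producing a flat specialization of double covers. Combined with \cite[Thm.~1.6(iv)]{Ho1}, which excludes the cone and Veronese cases for $n \geq 6$, this would prove (i) and show that all non-extremal strata fuse into a single irreducible component of dimension $7n + 21$, yielding (ii) when $n \not\equiv 1 \pmod 4$.

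For $n = 4k+1$ with $k \geq 2$, the disjointness of $\Vc_{d_{\max}}$ from the main component ought to follow from a rigidity argument. Since $B = D + B'$ with $B' \cap D = \emptyset$, the reduced preimage $\tilde D$ of $D$ in the double cover is a smooth rational curve with $\tilde D^2 = -(k+1)$ (using $\pi^* D = 2 \tilde D$ together with $D^2 = -(2k+2)$). For $k \geq 1$ such a $(-(k+1))$-curve is rigid under flat deformations of the surface, and no member of $\Vc_{d_{\min}}$ or its closure contains a $(-(k+1))$-rational curve; a deformation-theoretic argument then shows $\Vc_{d_{\max}}$ spans a separate component of the same dimension $28(k+1)$, proving (iii). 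The main obstacle I anticipate is precisely this last disjointness step: one must verify that $\tilde D$ gives a topological or infinitesimal obstruction preventing any flat family of Horikawa surfaces from connecting $\Vc_{d_{\max}}$ to the main stratum, most cleanly via an $H^1(T_S(-\log \tilde D))$ computation or by tracking the class $[\tilde D] \in H^2(S, \Z)$ through the putative deformation.
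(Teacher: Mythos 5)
The paper does not prove this statement: it is quoted from Horikawa's \cite{Ho1}, and the authors only sketch, in the final section, a modern shortcut (compute $H^1(T_S)$ and $H^2(T_S)^{\pm}$ for the general member of each stratum as in Theorem \ref{thm:TX}, deduce smoothness of the moduli space of dimension $7n+21$ there, conclude that the components are the closures of the top--dimensional strata, and separate the two components by semicontinuity of $h^0(-K_Y)$ on the quotients $Y=S/\iota$). Your stratification and dimension counts are correct and identify the right top--dimensional strata, but your two gluing/separating arguments both have genuine gaps.

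First, the iterated degeneration $\F_{d-2}\rightsquigarrow\F_d$ only handles the strata with $3d\le n+4$, where $h^0(6D+(n+3+3d)F)$ is constant in the family. In the intermediate range $n+5\le 3d\le n+3+2(d-2)$ the section $D$ splits off $B$, $h^0$ of the branch class jumps on the special fibre, and one computes $\dim \Vc_d=6n+2d+18$, which is \emph{increasing} in $d$; hence $\Vc_d\not\subset\ol{\Vc_{d-2}}$ for dimension reasons once $3(d-2)>n+4$, and the general branch divisor on $\F_d$ does not lie in the (proper) subsystem of limits coming from $\F_{d-2}$. So your two--step degeneration cannot place these strata in the closure of $\Vc_{d_{\min}}$; something like the unobstructedness argument (an analogue of Theorem \ref{thm:TX} giving $h^1(T_S)=7n+21$ and $H^2(T_S)^+=0$, whence the moduli space is smooth of dimension $7n+21$ at such points) is needed to force every lower--dimensional stratum into the closure of a top--dimensional one.

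Second, the separation of $\Vc_{d_{\max}}$ in (iii) cannot rest on the curve $\tilde D$ with $\tilde D^2=-(k+1)$. A negative rational curve with $h^0(N_{\tilde D/S})=0$ is not an obstruction to $S$ being a flat limit of surfaces not containing such a curve (negative curves routinely disappear under deformation), so its presence on the special fibre yields no contradiction; and tracking $[\tilde D]$, or any class, in $H^2(S,\Z)$ is hopeless for $n\equiv 1\pmod 8$, where the two families are homeomorphic and no topological or known differentiable invariant separates them (this is exactly the open problem discussed in \S 7.3 of the paper; Lemma \ref{lem:2} shows the parity of the intersection form only distinguishes them when $n\equiv 5\pmod 8$). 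The working argument is holomorphic: if the two closures met, a general type--$(d_{\min})$ surface would be a limit of type--$(2k+2)$ surfaces, and passing to the quotients by the canonical involution would give $9=h^0(-K_{\F_{d_{\min}}})\ge h^0(-K_{\F_{2k+2}})=2k+8$ by semicontinuity, a contradiction for $k\ge 2$. This is the mechanism the paper uses (end of \S\ref{ssec:proof1} and \S 7.1), and it, rather than a rigidity statement for $\tilde D$, is what you should substitute.
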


\subsection{Horikawa surfaces of the second  kind}\label{sec:2p_g-3}

Let us turn now to Horikawa surfaces of the second  kind. We have $p_g\geq 2$ and the irregularity is $q=0$. The cases $p_g=2$ and $p_g=4$ have been treated in detail in \cite {Pard, Ra}. If $p_g=n+1\geq 5$, Horikawa proves that the canonical system $|K|$ has a unique simple base point and the canonical map $\varphi_K: S\dasharrow \pp^n$ is a degree 2 map to  a surface of minimal degree $n-1$ containing a line, the image of the exceptional divisor over the base point of $|K|$. Hence this image cannot be the Veronese surface in $\pp^5$ so it is a rational normal scroll.   By imitating the proof of  \cite [Thm. 1.6, (iv)]{Ho1}, one sees that the image of a Horikawa surface of the second   kind can be a cone only if $p_g\leq 7$. Since we will not be interested in Horikawa surfaces with limited invariants, 
 we will focus on Horikawa surfaces of the second  kind whose canonical image is a smooth rational normal scroll  $S_{d,n}$ with $d\leq n-3$. Horikawa defines them to be \emph{of type $(d)$}. For these surfaces Horikawa proves the following:
\begin{thm}\label{thm:ho2.3} Let $S$ be a  Horikawa surface of the second  kind with $p_g=n+1 \geq 7$  of type $(d)$. Then:
\begin{enumerate}
\item $n -  d$ is odd and  $n\geq \max\{d+3,2d-2\}$;
\item the canonical map of $S$ is birationally  a double cover of  $\F_d$ branched along a  curve $B\in |6D+(n+5+3d)F|$ having two (proper or infinitely near) 4--uple points  along a line $F_0\in |F|$, that is a component of $B$, and irrelevant singularities elsewhere. 
\end{enumerate}
\end{thm}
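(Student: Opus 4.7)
The plan is to adapt Horikawa's proof of the first-kind analogue (Theorem~\ref{thm:Ho1a}) to the second-kind setting, handling the base point $p_0$ of $|K_S|$. Blowing up, $\epsilon\colon \tilde S\to S$ with exceptional $(-1)$-curve $E$, every section of $K_S$ vanishes simply at $p_0$ so its pullback vanishes on $E$, and the moving part of $|K_{\tilde S}|$ is the base-point-free system $|K_{\tilde S}-2E|$, of dimension $n$, with $(K_{\tilde S}-2E)^2 = K_S^2 - 1 = 2(n-1)$ and $(K_{\tilde S}-2E)\cdot E=1$. It defines a degree-$2$ morphism $\pi\colon\tilde S\to \F_d\cong S_{d,n}$ restricting to an isomorphism $E\to F_0$ onto a ruling fiber $F_0\in|F|$.

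The core of the argument is part (ii), via adjunction for double covers. On the canonical model $X=S_{\mathrm{can}}$ (with only Du Val singularities, since $S$ is minimal of general type), there is a finite degree-$2$ morphism $\pi_X\colon X\to\F_d$ (after blowing up the image of $p_0$ if needed) and a line bundle $L\in\Pic(\F_d)$ with $(\pi_X)_*\OO_X\cong\OO_{\F_d}\oplus L^{-1}$ and branch divisor $B\in|2L|$. Pulling back, $K_{\tilde S}=\pi^*(K_{\F_d}+L)$, and combining with $K_{\tilde S}=\pi^*\bigl(D+\tfrac{n-1+d}{2}F\bigr)+2E$ and the fact that $2E$ and $\pi^*F_0$ differ by a $\pi$-exceptional divisor gives $L\sim 3D+\tfrac{n+5+3d}{2}F$, hence $B\in|6D+(n+5+3d)F|$. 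That $F_0\subset B$ is immediate: $\pi$ has generic degree $2$ while $\pi|_E$ is an isomorphism onto $F_0$, so the cover ramifies along $F_0$. The two (proper or infinitely near) $4$-uple points of $B$ on $F_0$ appear from the discrepancy $(2E)^2=-4$ versus $(\pi^*F_0)^2=0$: writing $\pi^*F_0=2E+G$ with $G$ $\pi$-exceptional, one finds $G^2+4E\cdot G=4$, which forces $G$ to be a pair of disjoint $(-2)$-curves each meeting $E$ transversally (or an infinitely-near variant), corresponding under the double-cover resolution to exactly two $4$-uple singularities of $B$ on $F_0$. Remaining singularities of $B$ are irrelevant, producing at most Du Val on $X$.

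Part (i) then follows. The bound $n\geq d+3$ is the smoothness condition $d\leq n-3$ for the scroll $S_{d,n}$, already built into the definition of type $(d)$. Integrality of the coefficient $\tfrac{n+5+3d}{2}$ of $L$ in $\Pic(\F_d)$ forces $n-d$ odd; and $n\geq 2d-2$ follows from an effectivity argument on the residual branch $B-F_0\in|6D+(n+4+3d)F|$ with two $3$-uple points along $F_0$, paralleling the derivation of $n\geq 2d-3$ in the first-kind case.

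The main obstacle is the local analysis at $p_0$: showing that $G$ has the claimed shape and that the corresponding branch singularities are exactly two $4$-uple points (not, say, one $8$-uple point) requires Horikawa's explicit local normal forms for the canonical double cover near the base point of $|K|$.
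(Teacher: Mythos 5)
This statement is not proved in the paper at all: it is quoted from Horikawa \cite{Ho2} (``For these surfaces Horikawa proves the following''), so there is no internal argument to compare yours with, and what is really needed is a self-contained reconstruction of Horikawa's proof. Your outline follows the right route: blowing up the base point $p_0$, identifying the moving part $|K_{\tilde S}-2E|$ as a free system of self-intersection $2(n-1)$ mapping $E$ isomorphically to a line, recovering the class of $B$ from the double-cover adjunction formula, and getting $n-d$ odd from the integrality of $L=3D+\tfrac{n+5+3d}{2}F$. All of that numerology checks out.

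The genuine gap is exactly where you flag it, and unfortunately it is the entire content of part (ii). The identity $G^2+4E\cdot G=4$ does \emph{not} force $G$ to be two disjoint $(-2)$-curves meeting $E$ transversally: for example $G=G_1+2G_2+G_3$ with $G_1,G_2,G_3$ an $A_3$-chain of $(-2)$-curves over a single point of $F_0$ and $E\cdot G_2=1$ gives $G^2=-4$, $E\cdot G=2$, hence $G^2+4E\cdot G=4$, and one checks $(2E+G)\cdot G_i=0$ for all $i$, so this is a perfectly consistent numerical configuration that does not correspond to two $4$-uple points. The assertion that the non-negligible singularities of $B$ along $F_0$ are precisely two (proper or infinitely near) points of multiplicity exactly $4$ --- equivalently, the coefficients $-4E_1-4E_2$ in the branch class --- is the local analysis of the canonical map near $p_0$ that Horikawa actually carries out, and you defer to it without supplying it; nothing in your argument rules out, say, a single worse singularity of $B$ on $F_0$. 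Two secondary gaps: (a) when $n=d+3$ the scroll contains a second line, the minimal section $D$ of degree $\tfrac{n-d-1}{2}=1$, and you must exclude that $E$ maps there rather than to a ruling (the analogous dichotomy for the first kind is the content of cases (L$^\prime$), (Q), (Q$^\prime$) of Lemma \ref{lem:-4iota}); (b) the bound $n\ge 2d-2$ does not follow from bare effectivity of $B-F_0-D$, which only yields $(B-F_0-D)\cdot D=n+4-2d\ge 0$, i.e.\ $n\ge 2d-4$; one must additionally use that the residual curve has a node at $F_0\cap D$, leaving $n+2-2d\ge 0$ further intersection points with $D$, as in Proposition \ref{prop:countb}(iii).
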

 For fixed $n$, denote by   $\mathcal M_{n}^{{\rm Hor,2}}$ the subset of the moduli space of surfaces of general type consisting the Horikawa surfaces of the second   kind of type (d) with $p_g=n+1$.
\begin{thm}\label{thm:Ho23b}
\begin{enumerate}
\item $\mathcal M_{n}^{{\rm Hor,2}}$ is a union of irreducible components of the moduli space of surfaces of general type and for $n\ge 7$ it contains all the Horikawa surfaces of the second   kind with $p_g=n+1$;
\item if $n$ is not divisible by 4, then $\mathcal M_{n}^{{\rm Hor,2}}$ is irreducible of dimension $7n+19$
\item if $n=4k$,  with $k\geq 2$,  then $\mathcal M_{n}^{{\rm Hor,2}}$ has two irreducible  components, both of dimension $28k+19=7n+19$: one, $\mathcal M_{4k+1}^{\rm Hor,2a}$, containing the  points corresponding to the surfaces of type $(d)$ with $d$ odd and $0\leq d\leq 2k-1$, the other, $\mathcal M_{4k+1}^{\rm Hor,2b}$, containing points corresponding to the surfaces of type $(2k+2)$. 
\end{enumerate}
\end{thm}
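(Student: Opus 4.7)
The proof follows closely Horikawa's argument in \cite{Ho2}, parallel to the first--kind analogue Theorem \ref{thm:Ho1b}. The strategy is to stratify $\mathcal M_n^{\rm Hor,2}$ by the type $(d)$ appearing in Theorem \ref{thm:ho2.3}, to show that each stratum is irreducible of dimension $7n+19$, to prove that these strata exhaust an open union of irreducible components of the Gieseker moduli space, and finally to decide which strata merge into the same component.

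For each admissible $d$ (i.e.\ $n-d$ odd, $d\le n-3$, $n\ge 2d-2$), I would build a parameter space $V_d$ parametrising triples $(\F_d, F_0, B)$, where $F_0\in |F|$ is a fibre and $B\in |6D+(n+5+3d)F|$ is a reduced divisor containing $F_0$ with two (possibly infinitely near) $4$--uple singularities on $F_0$ and only irrelevant singularities elsewhere. Irreducibility of $V_d$ is standard: after fixing $F_0$, the residual branch divisor varies in the irreducible linear system $|6D+(n+4+3d)F|$, and the conditions on singularities cut out an irreducible locally closed subvariety. A direct $h^0$--computation on $\F_d$, subtracting $\dim\Aut(\F_d)$ and the codimension of the singularity conditions along $F_0$, yields $\dim V_d/\Aut(\F_d)=7n+19$ for every admissible $d$. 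The natural map $V_d/\Aut(\F_d)\to \mathcal M_{2n-1,n+2}$ is generically finite, since on a general Horikawa surface of the second kind the canonical involution is uniquely determined, so its image is an irreducible subset of dimension $7n+19$.

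That $\mathcal M_n^{\rm Hor,2}$ is a union of irreducible components and, for $n\ge 7$, contains every Horikawa surface of the second kind follows from the remark recalled before Theorem \ref{thm:ho2.3} (for $p_g\ge 8$ the canonical image cannot be a cone and so must be a scroll $S_{d,n}$), together with the upper semi--continuity of the type of the scroll containing the canonical image, which makes each stratum open in the union of components of $\mathcal M_{2n-1,n+2}$ that it meets. Since these strata already have the expected dimension at a Horikawa point, they fill entire components of the moduli space, proving (i).

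The final and most delicate step, which is the main obstacle, is to decide when two strata $V_d/\Aut(\F_d)$ and $V_{d'}/\Aut(\F_{d'})$ belong to the same irreducible component. The mechanism is the standard flat specialisation $\F_{d-2}\leadsto \F_d$: along this family the branch divisor can be deformed compatibly provided $h^0(\F_d,\,6D+(n+5+3d)F)$ does not jump between consecutive values of $d$ of the same parity. A careful $h^0$--bookkeeping shows that such a jump occurs precisely when $n$ is divisible by $4$, isolating the top admissible stratum from the smaller ones. Combined with the parity constraint $n-d\equiv 1\pmod 2$, this yields exactly one irreducible component of dimension $7n+19$ when $n\not\equiv 0\pmod 4$, and two components $\mathcal M_{4k+1}^{\rm Hor,2a}$, $\mathcal M_{4k+1}^{\rm Hor,2b}$ when $n=4k$, proving (ii) and (iii).
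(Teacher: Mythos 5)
This theorem is quoted from Horikawa's paper \cite{Ho2} and is not proved in the text; the authors only sketch, in the final section, how one could recover it by showing that the general surface in each stratum $\mathcal M_{n,d}^{\rm Hor,2}$ satisfies the conclusions of Theorem \ref{thm:TX} (so that the components are the closures of the top--dimensional strata) and by separating the components via semicontinuity of $h^0(-K)$ on the quotients by the canonical involution. Your proposal follows the same broad outline, but it contains a genuine error that derails the component count.

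The central problem is your claim that $\dim V_d/\Aut(\F_d)=7n+19$ for \emph{every} admissible $d$. This is false, and it contradicts Proposition \ref{prop:countb} of this very paper: the stratum of type $(d)$ has dimension $7n+19$ only for $d=0$ ($n$ odd), dimension $7n+20-d$ for $d\ge 1$ with $3d\le n+4$ (so equal to $7n+19$ only when $d=1$), and dimension $7n+21+\eta-d\le 7n+19$ when $3d>n+4$, with equality exactly when $n=2d-2$. If you redo the count you will see that imposing the two $4$--uple points along $F_0$ costs more than you are crediting back from $\dim\Aut(\F_d)$ once $d\ge 2$, and that for $3d>n+4$ the curve $D$ splits off the branch divisor, changing the parameter count again. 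Because of this, your mechanism for deciding which strata merge is also wrong: the strata are not equidimensional pieces glued along $\F_{d-2}\leadsto\F_d$ specializations subject to an $h^0$--jump criterion (and the jump of $h^0(\F_d,6D+(n+5+3d)F)$ occurs when $3d>n+6$, which has nothing to do with $n\equiv 0\pmod 4$). The correct picture is that exactly one stratum attains the top dimension $7n+19$ when $4\nmid n$ (namely $d=0$ or $d=1$ according to parity) and all other strata are strictly smaller and lie in its closure, whereas for $n=4k$ a second stratum, the one with $n=2d-2$, also attains dimension $7n+19$ and gives the second component; its disjointness from the first must then be proved separately (the paper does this by comparing $h^0(-K)$ of the quotient surfaces along a putative degeneration). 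Finally, your argument for (i) — that each stratum is "open in the union of components it meets" by semicontinuity of the scroll type — cannot work as stated, since the lower--dimensional strata are in the closure of the top one and hence are not open; establishing that $\mathcal M_n^{\rm Hor,2}$ is a union of components requires an actual deformation--theoretic input (unobstructedness plus $h^1(T)$ equal to the dimension of the top stratum), not just a dimension count.
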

Thus in  $\mathcal M_{n}^{{\rm Hor,2}}$ there are the points corresponding to the surfaces of a given type $(d)$, with  $n\geq\max\{d+3, 2d-2\}$ and $n-d$ odd. These form an irreducible constructible set  $\mathcal M_{n,d}^{{\rm Hor,2}}$  that we call the \emph{stratum of type $(d)$}. A surface corresponding to a general point in $\mathcal M_{n,d}^{{\rm Hor,2}}$ will be 
called \emph{general of type (d)}. 
The moduli space $\mathcal M_{n}^{{\rm Hor,2}}$ is the disjoint union  of the strata of type (d) for $0\le d\le \min\{n-3, \frac{n+2}{2}\}$.

Note that in the above setup we can never have $n+4=3d$, because $n-d$ is odd by assumption. 

We now determine the dimensions of the strata. 
\begin{prop}\label{prop:countb}  Let $\eta:=\max\{0, 3d-n-4\}$. Then  $\eta\le d-2$ and
\begin{enumerate}
\item for $n$ odd and $d= 0$ (hence $\eta=0$),   $\mathcal M_{n,0}^{{\rm Hor,2}}$ has dimension $7n+19$  and is dense  in $\mathcal M_{n}^{{\rm Hor,2}}$;
\item  if $d\geq 1$ and $\eta= 0$, then $\mathcal M_{n,d}^{{\rm Hor,2}}$ has dimension $7n+20-d$ and is dense in $\mathcal M_{n}^{{\rm Hor,2}}$ if and only if $d=1$ (and $n$ is even);
\item  if $\eta>0$,  then  $\mathcal M_{n,d}^{{\rm Hor,2}}$ has dimension $7n+21+\eta-d\le 7n+19$ and equality holds if and only if $\eta=d-2$, i.e.  for  $\mathcal M_{4k+1,2k+2}^{\rm Hor,2}=\mathcal M_{4k+1}^{\rm Hor,2b}$. 

\end{enumerate}
 In cases (i) and (ii) (or equivalently if $\eta=0$) the branch curve $B$ of the surface corresponding to the general point in $\mathcal M_{n,d}^{{\rm Hor,2}}$ is of the form $B=F_0+B'$, with $F_0\in |F|$ and $B'\in |6D+(n+4+3d)F|$  irreducible,  and  $B'$ has two ordinary triple points at two distinct points $P_1,P_2$ of $F_0$ 
 and there is 
 no other singularity of $B$. 
 
In case   (iii) (or equivalently  if $\eta>0$),   
the  branch curve $B$ of the surface corresponding to the general point in $\mathcal M_{n,d}^{{\rm Hor,2}}$ is of the form $B=F_0+D+B'$, with $F_0\in |F|$ and $B'\in |5D+(n+4+3d)F|$, and $B'$ has a node at the intersection point $P_1$ of $F_0$ and $D$,   an ordinary triple point at a point $P_2\in F_0$ distinct from $P_1$, 
and intersects $D$ transversally at $n+2-2d$ points off $P_1$ (that are nodes for $B$) and has no other singularity.
\end{prop}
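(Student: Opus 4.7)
The whole statement is a dimension count based on the double cover description of Theorem~\ref{thm:ho2.3}. A surface $[S]\in\mathcal M^{\rm Hor,2}_{n,d}$ corresponds to the datum of a branch curve $B\in |6D+(n+5+3d)F|$ on $\F_d$ containing a line $F_0\in|F|$ as a component and having two (proper or infinitely near) $4$-uple points on $F_0$, modulo the action of $\Aut(\F_d)$. So the basic identity to establish is
\[
\dim \mathcal M^{\rm Hor,2}_{n,d} \;=\; \dim \mathcal B_{\rm valid} \;-\; \dim \Aut(\F_d),
\]
where $\mathcal B_{\rm valid}$ is the parameter space of such triples $(B,F_0,\{P_1,P_2\})$. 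The inequality $\eta\le d-2$ is a direct consequence of the hypothesis $n\ge 2d-2$ from Theorem~\ref{thm:ho2.3}(i), since $3d-n-4\le d-2$.

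Write $B=F_0+B_1$ with $B_1\in|6D+(n+4+3d)F|$; a $4$-uple point of $B$ on $F_0$ corresponds to a point of multiplicity $3$ on $B_1$. When $\eta=0$ one has $n+4-3d\ge 0$, so $D$ is not a fixed component and the usual formula $h^0(\F_d,pD+qF)=\sum_{i=0}^p(q-id+1)$ yields $\dim|6D+(n+4+3d)F|=7n+34$. Since $P_1,P_2$ are generic distinct points on $F_0$, the two ordinary triple point conditions are independent of codimension $12$; adding the parameters for $F_0\in\mathbb P^1$ and for $P_1,P_2\in F_0$ gives $\dim\mathcal B_{\rm valid}=7n+25$. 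Subtracting $\dim\Aut(\F_0)=6$ for $d=0$ or $\dim\Aut(\F_d)=d+5$ for $d\ge 1$ yields cases~(i) and~(ii), and also shows that the generic $B$ has precisely the form described.

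When $\eta>0$ the inequality $D\cdot B_1=n+4-3d<0$ forces $D$ to be a fixed component of $|6D+(n+4+3d)F|$, and one checks that the multiplicity is exactly $1$: indeed $n+4-2d=D\cdot(5D+(n+4+3d)F)\ge 2$ follows from $n\ge 2d-2$, so the residual system $|5D+(n+4+3d)F|$ has no further fixed component along $D$. Thus generically $B=F_0+D+B_2$ with $B_2\in|5D+(n+4+3d)F|$. The $4$-uplicity of $B$ at $P_1=F_0\cap D$ translates into an ordinary node of $B_2$ at $P_1$ (codim $3$), and the $4$-uplicity at $P_2\in F_0\setminus D$ into an ordinary triple point of $B_2$ (codim $6$); moreover $B_2$ must meet $D$ transversally at $D\cdot B_2-2=n+2-2d$ further points. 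Computing $h^0(\F_d,5D+(n+4+3d)F)=6n+30+3d$ and adding the free parameters for $F_0$ and $P_2$, one obtains $\dim\mathcal B_{\rm valid}=6n+22+3d$, hence $\dim\mathcal M^{\rm Hor,2}_{n,d}=6n+17+2d=7n+21+\eta-d$, giving case~(iii). The equalities $\dim\mathcal M^{\rm Hor,2}_{n,d}=7n+19$ in the density statements are just the comparison of the three formulas with $\dim\mathcal M^{\rm Hor,2}_n=7n+19$ from Theorem~\ref{thm:Ho23b}.

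The hardest step will be controlling the singularity conditions. On the one hand one needs that the two multiple point conditions on $B_1$ (resp.\ the node and the triple point on $B_2$) cut out the expected codimension; this is standard once one exhibits one curve in the system with those singularities and no more, which can be done by an explicit construction on $\F_d$. On the other hand one must show that deformations with infinitely near $4$-uple points, or with more accidental singularities on $B$, form proper closed subsets, and that in the $\eta>0$ case no additional multiplicity of $D$ or of other fixed components appears generically; this requires an attentive analysis of the residual linear system $|5D+(n+4+3d)F|$ using the hypothesis $n\ge\max\{d+3,2d-2\}$. Once this is in place the remaining steps are routine cohomology computations on Hirzebruch surfaces.
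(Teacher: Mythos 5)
Your proposal follows essentially the same route as the paper's own (sketch of) proof: both count parameters for the branch datum $(B,F_0,P_1,P_2)$ in the relevant linear system on $\F_d$, subtract $\dim\Aut(\F_d)$, and split into the cases $\eta=0$ (where $D$ does not split off) and $\eta>0$ (where $B=F_0+D+B'$, with a node of $B'$ at $F_0\cap D$ and a triple point elsewhere on $F_0$); your bookkeeping of the multiple-point conditions ($12$ conditions plus $2$ moving points versus the paper's $10$ conditions for moving triple points) is equivalent and all the resulting dimensions agree. Like the paper, you defer the verification that the singularity conditions impose the expected codimension and that the general member has no extra singularities, so the two arguments match in both substance and level of detail.
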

\begin{proof}[Sketch of proof] 
The branch curve $B$ contains a line $F_0\in |F|$. The residual curve sits in the linear system $|6D+(n+4+3d)F|$. We compute 
$$
(6D+(n+4+3d)F)\cdot D=n+4-3d.
$$
So, if $\eta=0$ (i.e., in cases (i) and (ii)) the curve $D$ does not split off $|6D+(n+4+3d)F|$. One has
$$
\dim(|6D+(n+4+3d)F|)=7n+34.
$$
To impose two triple points to $|6D+(n+4+3d)F|$ along $F_0$ is $10$ conditions, and adding one parameter for the variation of $F_0$ in $|F|$, we get $7n+25$ parameters. Finally, subtracting the dimension of the automorphism group of $\F_d$ (i.e., 6 if $d=0$ and $d+5$ otherwise), one gets the results in (i) and (ii).

Next assume $3d>n+4$, namely $\eta\geq 1$ (case (iii)). Note that, being $n\geq 2d-2$, one has $\eta\leq d-2$. In this case the branch curve $B$ contains a line $F_0\in |F|$ and the curve $D$. The residual curve sits in $|5D+(n+4+3d)F|$,  which is a linear system of dimension $7n+33+\eta$.

Next we have to impose to $|5D+(n+4+3d)F|$ a double point at the intersection point of $F_0$ with $D$ and a triple point along $F_0$, and let $F_0$ move, which is 7 conditions. Finally we have to subtract the dimension of the automorphism group of $\F_d$ that is $d+5$. Eventually we find 
$$
7n+33+\eta-(d+12)=7n+21+\eta-d\leq 7n+19
$$
parameters, as wanted. The number of parameters is $7n+19$ if and only if $\eta= d-2$, i.e., if and only if $n+2=2d$. Since $n-d$ is odd, $d$ has to be odd, so we can set $d=2k+1$ (with $k\geq 1$) and accordingly $n=4k$ as wanted.

The final assertions are easy to prove and we can leave the standard details to the reader.   \end{proof} 

\begin{rem}\label{rem:due} The surface corresponding to the general point of $\mathcal M_{n,d}^{{\rm Hor,2}}$ is  smooth  if we are in cases (i)--(ii) of Proposition \ref {prop:countb}, whereas if we are in case (iii) it has exactly $n + 2 - 2d=d-2-\eta$ double points of type $A_1$ if we are in case (iii) of Proposition \ref {prop:countb}.  So for $3d>n+4$ the codimension of $\mathcal M_{n,d}^{{\rm Hor,2}}$ in $\mathcal M_{n}^{{\rm Hor, 2}}$ is equal to the number of $A_1$ points of the  general surface of $\mathcal M_{n}^{{\rm Hor,2}}$.

\end{rem}

\begin{rem}\label{rem:liup} It is useful, for our later purposes, to give an alternative  description of surfaces corresponding to points in $\mathcal M_{n,d}^{{\rm Hor,2}}$. 
Any such a surface is birationally a double cover $\phi: X\longrightarrow \F_d$ branched along a reduced curve $B\in |6D+(n+5+3d)F|$ (with $n\geq\max\{d+3, 2d-2\}$ and $n-d$ odd) having irrelevant singularities besides two (proper or infinitely near) 4--uple points $P_1,P_2$ along a ruling  $F_0\in |F|$ that is a component of $B$. To make things easier, let us suppose that the points $P_1,P_2$ are distinct, what corresponds to a generality assumption in $\mathcal M_{n,d}^{{\rm Hor,2}}$. 

Let us consider the blow-up $\varphi\colon \tilde \F_d \longrightarrow \F_d$ of $\F_d$ at the points $P_1,P_2$. We will denote by $\Phi$ the strict transform of $F_0$ on $\tilde \F_d$, that verifies $\Phi^2=-2$.  Consider the commutative diagram 
\begin{equation}
\xymatrix{
X''\ar[r] \ar[d]^{f } &  X'\ar[r] \ar[d]^{\phi' } & X \ar[d]^{\phi }\\
\tilde \F_d \ar[r]^{{\rm id} } & \tilde \F_d \ar[r]^{\varphi }&  \F_d}
\end{equation}
where the  rightmost  square is cartesian and the morphism $X''\longrightarrow X'$ is the normalization. Let us look at the double cover $f\colon X''\longrightarrow \tilde \F_d $. Its branch curve consists of the curve $B_0$, the strict transform of $B$, and $B_0=\Phi+B_0'$, with $B'_0\cap \Phi=\emptyset$. Moreover  $B_0'\in |\varphi^*(6D+(n+3d+4)F)-3E_1-3E_2|$  has only irrelevant singularities and it contains the strict transform $D'$ of the curve $D$  whenever $3d>n+4$.  Since $\Phi$ is contained in the branch locus of $f$ we have $f^*(\Phi)=2E$, where $E$ is a $(-1)$-curve.

 Let us now consider the commutative diagram 
\[\xymatrix{
X''\ar[r] \ar[d]^{f } &  \bar S \ar[d]^{g} \\
\tilde \F_d  \ar[r]^{\psi} & Z_d}
\] 
where  $X''\to \bar S$  is the contraction of $E$  to a smooth point $p$ and  $\psi\colon \tilde \F_d  \longrightarrow Z_d$ is the contraction of $\Phi$ to a singular point $x=g(p)$ of type $A_1$. Then the morphism  $g\colon  \bar S \longrightarrow Z_d$ is the double cover branched at the point $x$ and along the image $\mathfrak B$ of the curve $B'_0$.  Notice that   $\mathfrak B\cong B'_0$ since $B'_0$ and $\Phi$ are disjoint. Notice also that $\bar S$  can have at most canonical  singularities corresponding to the irrelevant singularities of $\mathfrak B$. Since the morphism $g$ is finite, the Hurwitz formula  gives $2K_{\bar S}= g^*(2K_{Z_d}+\mathfrak B)$. The pull back of $2K_{Z_d}+\mathfrak B$ to $\tilde{\F}_d$  is equal to  $\varphi^*(2D+(n+d)F)-E_1-E_2=\varphi^*(2D_0+(n-d)F)-E_1-E_2$. Since $n-d\ge 3$ by assumption, we may apply Lemma \ref{lem:ample}  and conclude that $\varphi^*(2D+(n+d)F)-E_1-E_2$ is nef and that $\Phi$ is the the only curve that has zero intersection with it. Therefore  $2K_{Z_d}+\mathfrak B$ is ample.

So $K_{\bar S}$ is ample and $\bar S$ is the canonical model of a   Horikawa surface of the second  kind $S$. The quotient of $\bar S$ by the canonical involution $\iota$ is the surface $\tilde\F_d$ and the cover $\bar S\to Z_d$ is branched on $\mathfrak B$ and on the singular point $x$.

Conversely, a double cover as above is a Horikawa surface of the second kind of type (d).
\end{rem}

\section{Minimal surfaces with $K^2=2p_g-4$ containing  a 2--Gorenstein T--chain}\label{sec:Tchain}
 In this section we will classify Horikawa surfaces of the  first kind with $p_g=n+1\geq 4$ of type $(d)$, described in Theorem \ref {thm:Ho1a},
possessing in addition a 2--Gorenstein T--chain  (cf. \S \ref{ssec:Tsurf}). This classification will  be used in \S \ref{sec:Hor-sing} to describe Horikawa surfaces of the second kind with a 2-Gorenstein T-singularity.

Let $\mathfrak C$ be a 2-Gorenstein T--chain. If $\kC$  is of type $[4]$ we denote by  $C$ its only curve, if it is of type [3,3] we write  $\kC=C_0+C_1$ and if it is of type $[3,2^k, 3]$ with $k\ge 1$ we write  $\kC=C_0+C_1+\dots +C_{k+1}$, where $C_0, C_{k+1}$ are (-3)--curves,   $C_1,\dots, C_k$ are (-2)--curves  and $C_i$ intersects only $C_{i-1}$ and $C_{i+1}$
 transversely in one point. 

We start with a simple observation: 
\begin{lem}\label{lem:-4iota}
 Let $S$ be a Horikawa surface of the first kind of type (d) with $p_g=n+1\ge 4$ possessing a 2--Gorenstein T--chain $\kC$  and denote by $\fie_K$ the canonical map and by $\iota$ the canonical involution of $S$. Then there are four  possibilities:
\begin{itemize} 
\item[(L)] $\fie_K(\kC)$ is a ruling of $S_{d,n}$. In this case $\iota (\kC)=\kC$ but $\kC$ is not fixed by $\iota$ pointwise.
\item[(L$^\prime$)] $\fie_K(\kC)$ is a line that is not a ruling  of $S_{d,n}$. In this case $p_g\le 7$,  $\iota (\kC)=\kC$ but $\kC$ is not fixed by $\iota$ pointwise.
\item[(Q)] $\fie_K(\kC)$ is a smooth conic. In this case  $\kC$ is of type $[4]$ and  either $\iota (\kC)\ne \kC$ and $p_g\le 10$, or $\kC$ is  fixed by $\iota$ pointwise and $p_g\le 14$. 
\item[(Q$^\prime$)] $\fie_K(\kC)$ is a reducible conic. In this case  $p_g\le 10$, $\kC$ is not of type $[4]$ and  either $\iota (\kC)\ne \kC$ 
or $C_0$ and $C_{k+1}$ are  fixed by $\iota$ pointwise and $\iota(\kC)=\kC$.
\end{itemize}
\end{lem}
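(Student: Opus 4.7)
The approach is to analyse $\fie_K(\kC)\subset S_{d,n}$ and the action of $\iota$ on $\kC$ starting from the observation that $K_S\cdot\kC=2$ in both 2--Gorenstein types: for $\kC=[4]$, adjunction gives $K_S\cdot C=-2-C^2=2$; for $\kC=[3,2^k,3]$ only the two $(-3)$--curves contribute ($K_S\cdot C_0=K_S\cdot C_{k+1}=1$), while the intermediate $(-2)$--curves are contracted by $\fie_K$. Since $\fie_K$ is generically $2:1$ onto the scroll, $\fie_K(\kC)$ is a reduced curve of degree $1$ or $2$ in $\pp^n$.

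I would then enumerate the possibilities. If $\kC=[4]$, then $\fie_K|_C$ is either a degree--$2$ morphism onto a line (cases (L) or (L$^\prime$) according as this line is a ruling of $S_{d,n}$) or a birational morphism onto an irreducible degree--$2$ curve, which must be a smooth conic since the normalisation is $C\cong\pp^1$ and an irreducible reduced plane conic has arithmetic genus $0$; this is case (Q). If $\kC=[3,2^k,3]$, the $(-2)$--curves collapse to a single point and $C_0,C_{k+1}$ map isomorphically onto two lines through that point: if the two lines coincide we are in (L) or (L$^\prime$); otherwise they form a reducible conic, case (Q$^\prime$). In particular case (Q) can only arise from $\kC=[4]$ and case (Q$^\prime$) only from $\kC\ne [4]$.

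The bounds on $p_g$ come from the classification of lines and conics on the smooth scroll $S_{d,n}$ combined with $n\ge\max\{d+3,2d-3\}$ from Theorem \ref{thm:Ho1a}. The only non--ruling line on $S_{d,n}$ is the section $D$, which has degree one only when $d=n-3$; this forces $n\le 6$ in case (L$^\prime$). A similar analysis of smooth or reducible conics $aD+bF$ and of the effectivity of $\fie_K^{-1}(\fie_K(\kC))$ in $|K_S|$ yields $p_g\le 10$ in case (Q$^\prime$) and in the subcase of (Q) where $\iota(C)\ne C$; the looser bound $p_g\le 14$ when $C$ is fixed pointwise reflects that in this situation $\fie_K(C)$ lies in the branch divisor $B\in|6D+(n+3+3d)F|$, so only the weaker constraint $B-\fie_K(C)\ge 0$ applies.

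Finally, since $\fie_K\circ\iota=\fie_K$, we have $\iota(\kC)\subset\fie_K^{-1}(\fie_K(\kC))$. In cases (L) and (L$^\prime$) a component--by--component comparison shows $\iota(\kC)=\kC$, and $\kC$ cannot be fixed pointwise because this would make $\fie_K|_\kC$ birational onto its image, contradicting that the image has degree $1$. In case (Q), $\fie_K|_C$ is already birational; if $\iota(C)=C$ and $\iota|_C$ were non--trivial, then $\fie_K|_C$ would factor as $C\to C/\iota\to\fie_K(C)$ of degree $2$, contradicting (Q); hence either $\iota(C)\ne C$ or $C$ is pointwise fixed. The analogous dichotomy on $C_0$ and $C_{k+1}$ gives the statement in case (Q$^\prime$). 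The main obstacle I expect is the sharpness of the numerical bounds in cases (Q) and (Q$^\prime$), in particular isolating the window $10<p_g\le 14$ in which the $(-4)$--curve $C$ is forced to lie inside the branch locus of $\fie_K$.
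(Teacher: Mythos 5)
Your overall strategy coincides with the paper's: compute $K_S\cdot$ on the components of $\kC$, classify the image as a ruling, a non--ruling line, or a (smooth or reducible) conic, extract the numerical bounds from the geometry of lines and conics on $S_{d,n}$ together with intersection numbers against the branch divisor, and read off the behaviour of $\iota$ from the degree--$2$ structure of $\fie_K$. Two steps, however, are not justified as written.

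First, your derivation of the bound in case (L$^\prime$) does not close. From ``$D$ has degree one only when $d=n-3$'' combined with $n\ge\max\{d+3,2d-3\}$ you only get $n\ge 2(n-3)-3$, i.e.\ $n\le 9$ and $p_g\le 10$, not $n\le 6$. The missing ingredient is exactly the branch--locus constraint you invoke later for (Q): in case (L$^\prime$) the chain is not fixed pointwise, so the line $\Gamma=D$ is \emph{not} a component of $B\in|6D+(n+3+3d)F|$, whence $D\cdot B=n+3-3d\ge 0$; together with $n=d+3$ this gives $d\le 3$ and $p_g\le 7$. Second, and more substantively, the assertion that ``a component--by--component comparison shows $\iota(\kC)=\kC$'' in cases (L), (L$^\prime$) (and in the pointwise--fixed subcase of (Q$^\prime$)) is not automatic when $\kC$ is of type $[3,2^k,3]$ with $k\ge 1$. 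The inclusion $\iota(\kC)\subset\fie_K^{-1}(\fie_K(\kC))$ only tells you that the middle $(-2)$--curves $C_1+\dots+C_k$ land inside the full fibre $\Delta$ over their image point, and $\Delta$ is an A--D--E configuration that may strictly contain $C_1+\dots+C_k$; a priori $\iota$ could move this chain to a different chain inside $\Delta$. The paper's argument here is that $\iota$ must exchange (or fix) $C_1$ and $C_k$ because it exchanges (or fixes) $C_0$ and $C_{k+1}$, and that the dual graph of $\Delta$ is a tree, so there is a \emph{unique} chain in $\Delta$ with endpoints $C_1$ and $C_k$; hence the whole chain is $\iota$--invariant. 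You need some such argument to conclude that $\iota$ preserves all of $\kC$ and not merely its end curves.
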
 
\begin{proof}
Recall that $\fie_K$ is a morphism of degree $2$ that contracts precisely the $(-2)$-curves of $S$. If $\kC=C$ is of type $[4]$, we have $K_S\cdot C=2$ and therefore $C$ is either mapped $2$-to-$1$ to a line or it is mapped isomorphically to a conic. In the former case $\iota(C)=C$ but $C$ is not contained in the fixed locus of $\fie_K$ (so we are in case (L) or (L$^\prime$)),  while in the latter case either $\iota(C)\ne C$ or $C$ is contained in the fixed locus of $\iota$ (so we are in case (Q)).

If $\kC$ is of type $[3,2^k,3]$, then $K_S\cdot C_0=K_S\cdot C_{k+1}=1$ and therefore $C_0$ and $C_{k+1}$ are mapped to lines. Assume that $C_0$ and $C_{k+1}$ are mapped to the same line $R$ and therefore $C_0+C_{k+1}$ is $\iota$-invariant: the only non obvious claim is that in this case all the T--chain $\kC$ is $\iota$-invariant when   $k>0$.  Let $P\in R$  be the image point of $C_1+\dots+C_k$ and let $\Delta$ be the preimage of $P$, which  is an A-D-E configuration of $(-2)$-curves. Since $\fie_K$ maps $C_0$ and $C_{k+1}$ isomorphically to $R$, $C_0$ intersects $\Delta$ only at the point $C_0\cap C_1$ and  $C_{k+1}$  intersects $\Delta$ only at the point $C_k\cap C_{k+1}$. Since $\iota$ switches $C_0$ and $C_{k+1}$, it must  switch also $C_1$ and $C_k$.  Since the dual graph of $\Delta$ is a tree, there is only one chain of curves contained in $\Delta$ that has $C_1$ and $C_k$ as end points. So all the T-chain $\kC$ is preserved by $\iota$, and we are in case (L) or (L$^\prime$) again. 

If the lines $\fie_K(C_0)$ and $\fie_K(C_{k+1})$ are distinct, then we are in case (Q$^\prime$) and the only non obvious claim is that if $C_0$ and $C_{k+1}$ are contained in the fixed locus of $\iota$, then $\iota(\kC)=\kC$. This can be proven as above.  

Finally, note that in cases (L$^\prime$) and (Q$^\prime$) the canonical image contains  a line $\Gamma$ that is not a ruling of $S_{d,n}$. Such a line $\Gamma$  is a minimal section of $S_{d,n}$, so we have 
$$1= \deg \Gamma= \Gamma\cdot (D+\frac{n+d-1}{2}F)=D \cdot (D+\frac{n+d-1}{2}F)=\frac{n-d-1}{2},$$
  hence $n=d+3$ and, if $d>0$, $D=\Gamma$. In case (L$^\prime$) the image of the T--chain is not contained in the branch locus, so we  have $n+3\ge 3d$, namely $d\le 3$ and $p_g=n+1\le 7$. 

 In case (Q), $S_{d, n}$ contains a conic $\Gamma$ that is a section of the ruling. If $n\ge 5$, then $\Gamma=D$ is a minimal section. 
 If $\iota(C)\ne C$, then $\Gamma$ is not contained in the branch locus and then  $D\cdot (6D+(n+3+3d)F)=n+3- 3d\geq 0$. On the other hand, one has  
 $$2= \deg \Gamma= \Gamma\cdot (D+\frac{n+d-1}{2}F)= D \cdot (D+\frac{n+d-1}{2}F)=\frac{n-d-1}{2},$$
 and therefore $n=d+5$. Hence $d+5=n\ge3d-3$, namely $d\le 4$ and $p_g\le 10$. If $\Gamma$ is contained in the branch locus instead, one has $D\cdot (5D+(n+3+3d)F)=n+3-2d\geq 0$. Since $n=d+5$, we have  $d+8\geq 2d$ whence $d\leq 8$,  $n=d+5=13$ and so  $p_g\leq 14$.
 
 Finally  in case (Q$^\prime$), if  $\iota(\mathfrak C)\ne \mathfrak C$ exactly the same argument as above gives $p_g\leq 10$. If instead $\iota(\mathfrak C)= \mathfrak C$, then the line $\Gamma$ is contained in the  branch locus and so  $D\cdot (5D+(n+3+3d)F)=n+3-2d\geq 0$.  But in this case  we have  
 $n=d+3$  hence $d\leq 6$ and therefore  $n\leq 9$ and  $p_g\leq 10$.
 \end{proof}

\subsection{Case (L) of Lemma \ref{lem:-4iota}}
Since we are interested in Horikawa surfaces with unbounded invariants, in view of Lemma \ref{lem:-4iota} it suffices to analyze  in detail  only the case when the T-chain is mapped to a ruling of the canonical image of $S$.
Before stating our  result we recall some facts and set some notation. The canonical map of a Horikawa surface $S$ of the  first kind of type (d) induces a finite double cover $\bar S\to S_{d,n}\cong \F_d$, branched on a divisor $B$ with irrelevant singularities (Theorem \ref{thm:Ho1a}). 
So $S$  is  obtained from  this cover  by taking base change with  a suitable blow-up $\epsi\colon Y\to\F_d$ and then normalizing.  It follows that the quotient  $S/\iota$ by the canonical involution $\iota$ is smooth, isomorphic to $Y$. More precisely, $S$ is constructed as follows: one considers the blow-up $\epsi_1\colon Y_1\to\F_d$ of the singularities of $B$ and lets $S_1\to Y_1$ be the double cover branched on $B_1:=\epsi_1^*B-2\Delta_1$, where $\Delta_1$ is the total exceptional divisor of $\epsi_1$. If $B_1$ is smooth, then $S=S_1$, otherwise one repeats this procedure until one obtains a smooth branch divisor.

\begin{prop}\label{prop:-4} Let $S$ be a Horikawa surface of the  first kind  of type $(d)$  with $p_g=n+1\geq 4$ that contains  a 2-Gorenstein T-chain $\kC$ that 
is mapped by the canonical map to a ruling  $R$  of $S_{d,n}$. Then:
\begin{enumerate}
\item  if $\kC$ is of type $[4]$ or of type $[3,3]$, then  $R$ contains exactly two centers $P_1$, $P_2$ of the blow-up $\epsi\colon Y= S/\iota\to \F_d$;
\item if $\kC$ is of type $[3,2^k, 3]$ with $k\ge 1$, let $P\in R$ be the image of $C_1+\dots+C_k$; then $R$ contains exactly three centers $P_1$, $P_2$, $P_3$ of $\epsi$ where  $P_3$ is equal (or infinitely near) to  $P$, and $P_1,P_2$ are not infinitely near to $P_3$. 
\item   let  $\epsi'\colon Y'\to \F_d$ be the blow-up at $P_1,P_2$ and  let $E_1$, $E_2$ the corresponding  exceptional curves. Then $\kC$ is the pull back  on $S$ of $\Phi:={\epsi'}^*R-E_1-E_2$ and the following cases occur:
\begin{enumerate}
\item  $\kC=C$ is of type $[4]$ and  
 $\Phi$   intersects ${\epsi'}^*B-2E_1-2E_2$ at two distinct points
 \item $\kC=C$ is of type $[3,3]$   and $R'$  and  $\epsi^*B-2E_1-2E_2$ is  tangent at one point that is smooth for $\epsi^*B-2E_1-2E_2$
 \item $\kC$ is of type $[3,2^k, 3]$ with $k\ge 1$ and  
$\Phi$  meets  ${\epsi}'^*B-2E_1-2E_2$  at a double point of type $A_k$. 
\end{enumerate}
\end{enumerate}
\smallskip
\noindent Conversely, if a ruling $R$ of  the canonical image  of $S$ not contained in the branch locus of the canonical  map satisfies  condition (i)  [resp. (ii)] then the pull-back on $S$  of $\Phi:={\epsi'}^*R-E_1-E_2$  is a 2--Gorenstein  T-chain of type $[4]$ or $[3,3]$ [resp. $[3,2^k, 3]$ with $k\ge 1$].
\end{prop}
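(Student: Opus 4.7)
My approach is to analyze the T--chain $\kC$ via the canonical involution $\iota$ and the induced double cover $\pi\colon S\to Y=S/\iota$, exploiting the fact that $Y$ is the blow-up $\epsi\colon Y\to\F_d$ which resolves the branch $B$ of the canonical map. The main computational tools are the pullback identity $(\pi^*\Gamma)^2=2\Gamma^2$ for divisors $\Gamma$ on $Y$, together with the standard local description of double covers along curves meeting the branch.

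First I would determine the action of $\iota$ on the components of $\kC$ (case (L) of Lemma \ref{lem:-4iota}). For type $[4]$, $\iota|_C$ is a non--trivial involution, so $\pi^*\pi(C)=C$ and $\pi(C)^2=\tfrac{1}{2}C^2=-2$; for types $[3,3]$ and $[3,2^k,3]$ with $k\ge 1$, $\iota$ swaps $C_i\leftrightarrow C_{k+1-i}$. Computing the images and using the pullback identity, $\pi(\kC)$ is a single smooth $(-2)$--curve in the first two cases, while in the third it consists of a $(-3)$--curve $\Phi_0$ (the common image of the two endpoints $C_0,C_{k+1}$) together with auxiliary curves in $Y$ that are all contracted to the single point $P\in R$ by $\epsi$. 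Combined with the requirement that the image maps birationally to the ruling $R$ (with $R^2=0$), this pins down the centers of $\epsi$ on $R$ and proves (i) and (ii); the distinctness and non--infinite--nearness of $P_1,P_2$ relative to $P_3$ is ensured by the fact that $\Phi_0$ is disjoint from the exceptional divisor of $\epsi$ over $P$.

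For (iii), I would consider the partial blow-up $\epsi'\colon Y'\to\F_d$ at only $P_1,P_2$ and set $\Phi:=(\epsi')^*R-E_1-E_2$, so that $\Phi^2=-2$. The key observation is that $\kC$ is recovered, after resolving the singularities of the intermediate double cover $S'\to Y'$ branched over $B':=(\epsi')^*B-2E_1-2E_2$, from the preimage of $\Phi$ in $S$. The three sub-cases then correspond to the possible local pictures of $\Phi\cap B'$: two distinct transverse smooth intersection points give a single $(-4)$-curve (case (a)); a tangent intersection at a single smooth point of $B'$ yields a splitting into two $(-3)$-curves meeting at one node, via the local model $z^2=v-u^2$ (case (b)); and an intersection at an $A_k$-singular point of $B'$ produces an $A_k$--Du~Val singularity in $S'$ whose minimal resolution inserts $k$ intermediate $(-2)$-curves between the two strict--transform $(-3)$--curves, yielding a $[3,2^k,3]$-chain (case (c)). The converse direction is obtained by running this construction in reverse. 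The main technical obstacle will be the local analysis in case (c), where one has to verify that the $A_k$--singularity of $B'$ at a single point of $\Phi$, together with the double cover structure, assembles after resolution into \emph{exactly} the T--chain $[3,2^k,3]$, rather than some other Du~Val or non--Du~Val configuration.
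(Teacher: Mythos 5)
Your proposal is correct and follows essentially the same route as the paper: pass to the quotient $Y=S/\iota$, use the degree-two pullback formula to show the image of the chain is a $(-2)$- or $(-3)$-curve (hence count the centers on $R$), and then read off the type of $\kC$ from the local intersection of $\Phi$ with the branch divisor (transverse, tangent, or through an $A_k$ point). The one step you flag as a technical obstacle — matching the $A_k$ point to exactly the chain $[3,2^k,3]$ — is handled in the paper by the same tree/uniqueness argument used in Lemma \ref{lem:-4iota}, so your plan closes without difficulty.
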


\begin{proof}  We do a case by case analysis according to the type of the T--chain. We denote by $R'$ the strict transform of $R$ via $\epsi$. 

  If $\kC=C$ is of type $[4]$ then the image of $C$ in $Y:=S/\iota$  is  $R'$. The projection formula gives  $(R')^2=-2$, namely $R$ contains exactly two of the centers of $\epsi$, say $P_1$ and $P_2$.
 Since the preimage of  $R'$ is the smooth rational  curve $C$, $R'$ meets the branch locus $B_Y$ of the double cover $S\to Y$ transversally at two distinct points or, equivalently, $\Phi$ and ${\epsi'}^*B-2E_1-2E_2$ meet transversally at two points. 
 \smallskip
 
 If $\kC$ is of type $[3,3]$ one shows as above that the image $R'$ of $\kC$ in $Y$ is a  $(-2)$-curve, so $R$ contains exactly two centers $P_1$ and $P_2$ of $\epsi$.
 Since the preimage of $R'$ is the union of two rational curves meeting at a point, $R'$ is tangent to the branch locus $B_Y$ of the double cover $S\to Y$ at one point or, equivalently, $\Phi$ and ${\epsi'}^*B-2E_1-2E_2$ are tangent   at one  point.
  \smallskip
 
 If $\kC$ is of type $[3,2^k, 3]$ with $k\ge 1$ the image $R'$ of $C_0+C_{k+1}$ in $Y$ is a (-3)-curve, so $R$ contains exactly three centers $P_1,P_2$ and $P_3$ of $\epsi$, and one of these coincides with  $P$.  Up to permuting the centers $P_1$, $P_2$, $P_3$ we may assume that neither $P_1$ nor $P_2$ is  infinitely near to $P_3$ and that $P_3$ is equal to (or infinitely near to) $P$.  The point $P_3\in \Phi$ is a singular  point of ${\epsi'}^*B-2E_1-2E_2$	 since it is a center of $\epsi$.  On the other hand $\Phi\cdot ({\epsi'}^*B-2E_1-2E_2)=2$ and $\Phi$ is not contained in  ${\epsi'}^*B-2E_1-2E_2$, so $P_3$ is double point of ${\epsi'}^*B-2E_1-2E_2$ that has no tangent in common with $\Phi$. If  $P_3$ is of type $A_m$, then the pull back of $\Phi$ to $S$ is a 2-Gorenstein T-chain   $\kC'$ of type $[3,2^m,3]$. So $\kC$ and $\kC'$ are T-chains with the same end curves and such that their chains of $(-2)$--curves are mapped to the same point of $\F_d$; arguing as in the proof of Lemma \ref{lem:-4iota}
one shows $m=k$ and $\kC=\kC'$.
\smallskip
 
The converse statement is easy and left to the reader. 
 \end{proof}
 \begin{rem}\label{rem:-4}
The simplest instance  of  case (iii.a) of Proposition \ref{prop:-4} is when $R$ is a ruling of $\F_d$ that  contains two ordinary double points $P_1$ and $P_2$ of $B$ and meets $B$ transversally at two additional points.

However, there are more complicated possibilities, such as  when $R$ contains an ordinary triple point $P_1$ of $B$,  is simply tangent  to one of the branches of $B$ at $P_1$ and meets $B$ transversally at two additional points. In this case the point $P_2$ infinitely near to $P_1$ in the direction of $R$ is a center of $\epsi$ lying on $R$ but it is not singular for (the strict transform of) $B$.

Similarly, the simplest instance of case (c) is when $R$ contains three double points of $B$, $P_1$ and $P_2$  of type $A_1$ and $P_3$  of type $A_k$.
\end{rem}

\subsection{Cases  (L$^\prime$), (Q), (Q$^\prime$)  of Lemma \ref{lem:-4iota}}

For completeness we discuss briefly the remaining cases of Lemma \ref{lem:-4iota}.  First we give some  examples to show that  all cases actually  occur and the bounds for $p_g$ are sharp. At the end of the section we  make some  remarks   on the KSBA moduli space of Horikawa  surfaces of the second kind with small $p_g$.
\medskip

In all the examples below we specify integers $d,n$ with $n-d$ odd and $n\ge\max\{ d+3, 2d-3\}$,  and a  divisor $B\in |6D+(n+3+3d)|$ on  $\F_d$ with negligible singularities.  We denote by $S$ the minimal resolution of the double cover $\bar S\to\F_d$ branched on $B$, which is a Horikawa surface of the first kind with $p_g=n+1$. Recall that $S$ is obtained by repeatedly blowing up $\F_d$, taking base change of $\bar S\to\F_d$ and normalizing.
\medskip

\noindent\underline{Case (L$^\prime$):}  If $d=2$, $n=5$ and $B$ is general,  then the negative section of $\F_2$  pulls back to a $(-4)$-curve on $S$, so the bound for $p_g$  given in Lemma \ref{lem:-4iota} is sharp. These examples 
give a family of dimension 55.

If $n=4$, $d=1$ and $B\in |6D+10F|$ has  a double point   $P\in D$ and is general otherwise, then the strict transform of $D$ on $S$ is a $(-4)$--curve. The construction depends on 47 moduli.
 \bigskip

\noindent\underline{Case (Q):} For  $0\le d\le 8	$ and $n=d+5$,  take $B=D+B_0$ with $B_0\in |5D+(4d+8)F|$.  The strict transform of $D$ on $S$ is a $(-4)$--curve, fixed pointwise by the canonical involution and mapped to a conic by the canonical map, so the bound for $p_g$  given in Lemma \ref{lem:-4iota} is sharp also in this case. For $d=8$ (and $n=13$) these surfaces   fill up the component $\mathcal M^{\rm Hor,1b}_{13}$ of the Gieseker moduli space, which has dimension 112 (cf. Theorem \ref{thm:Ho1b}). 

In case (Q) there is also the possibility that the $2$-Gorenstein T--chain is not fixed by the canonical involution.
   For $d=4$ and $p_g=n+1=10$ all the surfaces in $\mathcal M^{\rm Hor,1b}_{9}$ are examples of this situation: the preimage via the canonical map of the negative section of $\F_4$ is the disjoint union of two $(-4)$--curves. So the bound for $p_g$  given in Lemma \ref{lem:-4iota} is sharp also in this case.
 \bigskip
 
\noindent\underline{Case (Q$^\prime$):} 
We  give a couple of examples. For $d=5$ and $n=d+3$ we take a divisor $B=B_0+R$, where $R$ is a ruling and $B_0\in D+|5D+25F|$ is general.  The strict  transforms of $R$ and $D$ on $S$ are disjoint $(-3)$-curves connected by the $(-2)$--curve mapping to the intersection point of $R$ and $D$, so $S$ contains  a T--chain of type $[3,2,3]$ that is invariant under $\iota$.

For  $d=2$, $n=5$ and $B\in |6D+14 F|$ with an ordinary  double point $P$ on  $D$,  two additional double points on the ruling $R$ that contains $P$ and otherwise general,  $S$ contains  four disjoint $(-3)$--curves, two mapping to $R$ and two mapping to $D$. Each of these curves  intersects transversally at a point  the $(-2)$--curve mapping to $P$, so the corresponding Horikawa surface of the first kind  contains four  T--chains of type $[3,2,3]$ not invariant under $\iota$. 

\subsection{Smoothability and the canonical involution}

Let $S$ be  a Horikawa  surface  of the  first kind with a  2--Gorenstein T--chain $\kC$ and let $X$ be the stable Horikawa surface of the second kind obtained by contracting  $\kC$ and the $(-2)$--curves disjoint from it (cf. Remark \ref{rem: XS}).
We can  ask ourselves if such a surface  has a $\Q$-Gorenstein smoothing,  i.e., if it can be flatly deformed to a Horikawa surface of the second  kind in such a way that the total space of the deformation is $\Q$--Gorenstein (cf. \S \ref{ssec:KSBA}).

The canonical involution $\iota$ of $S$ descends to an involution of $X$  iff $\iota(\kC)=\kC$, and such an involution, when it exists,  acts trivially on the canonical system $|K_X|$.  Since   Horikawa surfaces of the second  kind also have a canonical involution, by \cite[Cor.~8.65]{kollar-families}  the existence of  the canonical involution is a necessary condition for the $\Q$--Gorenstein smoothability of $X$. So some of the examples above for case (Q) and for case (Q$^\prime$) give rise to non-smoothable stable Horikawa surfaces of the second kind.

The two   families of surfaces falling in case (L$^\prime$) that we described above give rise to families of stable Horikawa surfaces of the second kind whose only non-canonical singularity is a 2--Gorenstein T--singularity,  with $p_g=5$, respectively $p_g=6$, depending on 47, respectively 55, moduli.
On the other hand the number of moduli of  Horikawa surfaces of the second  kind with 
$p_g=5, K^2=7$ and $p_g=6, K^2=9$ are 47 and 54 respectively. From this we see that the above stable surfaces  fill up  irreducible components of the moduli space of stable surfaces and are generically not smoothable, though the contracted   $(-4)$--curve   is fixed by the canonical involution.

\section{Minimal surfaces with $K^2=2p_g-3$ with a 2-Gorenstein T--singularity}\label{sec:Hor-sing}

In this section we describe the stable Horikawa surfaces of the second kind  whose only non canonical singularity is a $2$--Gorenstein T-singularity ({\em $2$--Gorenstein $T$-singular  surfaces} for short).

We start by setting some notation. Given a stable    $2$--Gorenstein T--singular  Horikawa surface $X$ of the second kind with $p_g=n+1\ge 4$, we denote by  $f\colon S\to X$  the minimal desingularization and by $\kC$  the T--chain contracted by $f$. The surface $S$ is a minimal Horikawa surface of the first kind with $p_g=n+1$ (cf. Remark \ref{rem: XS}) and $f^*K_X=K_S+\frac 12 \kC$ (cf. \S \ref {ssec:Tsurf}).   We say that $X$ is of type (d) if $S$ is. One may also rephrase this definition by saying, as in the case of smooth surfaces, that the canonical image of $X$ is isomorphic to $\F_d$. In fact the pull back map $f^*\colon H^0(K_X)\to H^0(K_S)$ is an isomorphism, hence there is a commutative diagram

\[\xymatrix{
S\ar[r]^{f }  \ar[dr]_{\fie_{K_S}}&  X\ar@{.>}[d]^{\fie_{K_X}} \\
&  \pp^n}
\] 

For all $d$ such that $n-d$ is odd and $n\ge\max\{ d+3, 2d-3\}$, we define $\mathcal D_{n,d}\subset \mathcal M_{2n-1,n+2}^{\rm KSBA}$ to be the locus corresponding to the 2-Gorenstein T-singular surfaces of type (d) and  we set $$\mathcal D_n:=\sqcup_d\mathcal D_{n,d},$$ where  $d$ ranges over all the admissible values. 
 Recall (cf. \S \ref{sec:gen}) that, as soon as $n\ge 6$, all Horikawa surfaces of the first kind are of type (d), so $\mathcal D_n$ parametrizes  all the  2--Gorenstein T--singular stable Horikawa surfaces of the second kind with $p_g=n+1$.

In order to state the main result of this section, we need a little more notation. We denote by $\mathcal D_{n,d}^0\subset \mathcal D_{n,d}$, $\mathcal D_n^0\subset \mathcal D_n$ the subsets consisting of the surfaces with a $\frac 14(1,1)$ singularity.
Finally, for $d>0$ we decompose $\mathcal D_{n,d}=\mathcal D_{n,d}'\sqcup \mathcal D_{n,d}''$,  where a class $[X]\in \mathcal D_{n,d}$ belongs to $\mathcal D_{n,d}'$ [resp.  $\mathcal D_{n,d}''$] if the $f$-exceptional  T--chain  on $S$ meets [resp. does not meet] the  strict transform on  $S$ of the negative section $D$ of $\F_d$.
Assume that the T--chain falls  in case (L) of   Lemma \ref{lem:-4iota} and label $P_1, P_2$ in such a way that $P_1$ is a proper   point of $\F_d$ (i.e., not an infinitely near one) and $P_2\notin D$;  then $[X]$ belongs to $\mathcal D_{n,d}'$ if  $P_1\notin D$ and belongs to   $\mathcal D_{n,d}''$ otherwise (note that $\mathcal D_{2d-3, d}''=\emptyset$ since $D$ and $B-D$ are disjoint in this case). If $d=0$ we set  $\mathcal D_{n,0}'=\mathcal D_{n,0}$,  $\mathcal D_{n,0}''=\emptyset$. We denote by $\overline W$ the closure of a subset $W\subset \mathcal M_{2n-1,n+2}^{\rm KSBA}$.
\begin{thm}\label{thm:Dn}
 Notation as above.  Fix  $n\ge 14$   and let  $d$ be an integer such  that $n-d$ is odd and $n\ge\max\{ d+3, 2d-3\}$. 
 Then \begin{enumerate}
 \item $\mathcal D_{n,d}'$ and $\mathcal D_{n,d}''$ are constructible
 \item  $\mathcal D_{n,d}^0$ is dense in $\mathcal D_{n,d}$. 
\item  $\mathcal D_{n,d}'$  and $\mathcal D_{n,d}''$ are irreducible. Their dimensions are listed   in Table \ref{tab:Dn}\footnote{A check mark in the  third [resp. fifth] column of Table \ref {tab:Dn}  indicates that ${\mathcal D_{n,d}'}$ [resp. ${\mathcal D_{n,d}''}$] is a component of $\mathcal D_{n,d}$. The symbol \xmark\, in the fifth column indicates that ${\mathcal D_{n,d}''}$ is not a component of $\mathcal D_{n,d}$. }, together with the indication of whether or not they are  irreducible components of ${\mathcal D_{n,d}}$.
\end{enumerate}
 \begin{table}[h!]
 \caption{}
 \label{tab:Dn}
  \begin{center}
    {\rm
    \begin{tabular}{|c|c|c|c|c|c|}       \hline
       & $\dim \mathcal D_{n,d}'$  & irr. comp.?       &  $\dim \mathcal D_{n,d}''$ &  irr. comp.?\\
       \hline
      $d=0$    & 7n+18 &\cmark   & -1 &\xmark\\
       \hline

       $n\ge 3d-1, d\ge 1$  & $7n+19-d$&  \cmark & $7n+18-d$  &\xmark\\
      \hline
     $ n=3d-3$   & $7n+19-d$& \cmark & $7n+19-d$  &\cmark\\
      \hline
       $ 2d-2\le n< 3d-3$  & $6n+2d+15$ & \cmark  & $6n+2d+16$  & \cmark\\
      \hline
          $ 2d=n+3$ & $7n+18$  & \cmark & -1 & \xmark  \\
      \hline

    \end{tabular}}
  \end{center}
\end{table}

 In particular, the 2-Gorenstein T-singular surfaces with a singularity of type $[3,2^k,3]$, $k\ge 0$,  appear in dimension at most $7n+17$ in $\mathcal M_{2n-1,n+2}^{\rm KSBA}$.
 \end{thm}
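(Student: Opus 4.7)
The plan is to parametrize $\mathcal D_{n,d}$ using the bijection (Remark \ref{rem: XS} and Proposition \ref{prop:XS}) between $2$--Gorenstein T--singular stable surfaces and pairs $(S,\kC)$, where $S$ is a minimal Horikawa surface of the first kind of type $(d)$ with $p_g=n+1$ and $\kC\subset S$ is a $2$--Gorenstein T--chain contracted by the desingularization $f\colon S\to X$. Since $n\ge 14$, Lemma \ref{lem:-4iota} rules out cases (L$^\prime$), (Q), and (Q$^\prime$), so $\kC$ falls in case (L) and the canonical map sends $\kC$ to a ruling $R$ of $S_{d,n}\cong \F_d$. By Proposition \ref{prop:-4}, a point of $\mathcal D_{n,d}$ is then determined, up to $\Aut(\F_d)$, by a triple $(B,R,(P_i))$ consisting of a branch divisor $B\in|6D+(n+3+3d)F|$, a ruling $R\in|F|$, and marked centers $P_1,P_2\in R$ (plus a further center $P_3\in R$ when $\kC$ is of type $[3,2^k,3]$ with $k\ge 1$) at which $B$ has the singular configuration prescribed by Proposition \ref{prop:-4}.

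\textbf{Step 1 (density and the final bound).} For $\kC$ of type $[4]$, Proposition \ref{prop:-4}(iii.a) merely requires $B$ to have two ordinary double points on $R$; for type $[3,3]$ the ruling must in addition be tangent to $B$ at a further smooth point; for type $[3,2^k,3]$ with $k\ge 1$ an additional $A_k$--double point of $B$ must lie on $R$. Each of these extra conditions strictly drops the dimension of the parameter space, so $\mathcal D_{n,d}^0$ is dense in $\mathcal D_{n,d}$, proving (ii), and the union of the non--$[4]$ strata has dimension at most $\dim\mathcal D_{n,d}-1\le 7n+17$, which is the final assertion.

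\textbf{Step 2 (dimension count).} Standard cohomology gives $\dim|6D+(n+3+3d)F|=7n+27$ when $n\ge 3d-3$; when $n<3d-3$, the curve $D$ splits off $B$ with multiplicity one, so one works with the residual system $|5D+(n+3+3d)F|$ of dimension $6n+23+3d$. Generically, imposing the two--node configuration at marked points on a ruling is codimension $6$ on the relevant linear system and contributes $3$ free parameters for $(R,P_1,P_2)$; subtracting $\dim\Aut(\F_d)$ (equal to $6$ for $d=0$ and $d+5$ for $d\ge 1$) produces the entries of the first three rows of Table \ref{tab:Dn}. For $\mathcal D_{n,d}''$ the extra requirement $P_1\in D$ removes the parameter for $P_1$ on $R$: when $D\not\subset B$ this yields a net loss of one dimension, but when $D\subset B$ one gains instead, since the node condition at $P_1$ now costs only one condition on $B-D$ (the other local branch being provided by $D$ itself). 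The remaining rows correspond to the boundary cases $d=0$ (where $D$ is not uniquely defined) and $2d=n+3$ (where $D$ is disjoint from $B-D$), in both of which $\mathcal D_{n,d}''$ is empty.

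\textbf{Step 3 (irreducibility, constructibility, main obstacle).} Irreducibility of each of $\mathcal D_{n,d}'$ and $\mathcal D_{n,d}''$ follows from the irreducibility of the corresponding parameter space of tuples $(B,R,P_1,P_2)$, projecting onto $|F|$ with fibers parametrizing $(P_1,P_2)\in R\times R$ together with a linear subsystem of $|B|$ of constant dimension; passage to the $\Aut(\F_d)$--quotient preserves irreducibility, giving (iii), while constructibility in (i) is clear. The main technical obstacle is the careful book--keeping of codimensions across the regimes of Table \ref{tab:Dn}: the treatment of infinitely near centers (Remark \ref{rem:-4}), the limit case $n=3d-3$ (where $D$ meets $B-D$ at one degenerate point and the two strata attain equal dimension) and $n=2d-3$ (where $D$ detaches from $B-D$ and $\mathcal D_{n,d}''$ disappears), and the subtle interaction between the condition $P_1\in D$ and the splitting of $D$ off $B$ when $n<3d-3$.
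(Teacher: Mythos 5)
Your parametrization by triples $(B,R,P_1,P_2)$ modulo $\Aut(\F_d)$ is essentially the paper's approach (it works on the blow-ups $Y_{ij}$ with the systems $|L_{ij}|$ of Appendix \ref{sec:appendix1}), and your dimension counts agree with Table \ref{tab:M}. However, there is a genuine gap: you never address the check marks and cross marks in Table \ref{tab:Dn}, i.e.\ the determination of which of $\ol{\mathcal D_{n,d}'}$, $\ol{\mathcal D_{n,d}''}$ are actually irreducible \emph{components} of $\ol{\mathcal D_{n,d}}$. This is part of assertion (iii) and is the hardest point of the proof. Two separate arguments are needed. First, when $n\ge 3d-1$ one must show that $\mathcal D_{n,d}''$ (which has dimension one less than $\mathcal D_{n,d}'$) lies in the closure of $\mathcal D_{n,d}'$; a dimension inequality alone does not give containment in the closure, and the paper proves it by an explicit degeneration of branch divisors (Lemma \ref{lem:01}). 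Second, and more seriously, in the range $2d-2\le n<3d-3$ one has $\dim\mathcal D_{n,d}''>\dim\mathcal D_{n,d}'$, so to justify the check mark for $\mathcal D_{n,d}'$ you must prove that $\mathcal D_{n,d}'$ is \emph{not} contained in $\ol{\mathcal D_{n,d}''}$. The paper does this by identifying $X/\iota$ with the surface $\ol{Y_{ij}}$ obtained by contracting $R'$ to an $A_1$ point, and then showing that a stable degeneration from $\mathcal D_{n,d}''$ to $\mathcal D_{n,d}'$ would violate semicontinuity because $h^0(-K_{\ol Y_{00}})=d+4$ while $h^0(-K_{\ol Y_{10}})=d+5$. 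Nothing in your proposal supplies this obstruction, and without it the row $2d-2\le n<3d-3$ of the table is unproved.

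A secondary but real issue is that to speak of $\mathcal D_{n,d}$ as a constructible set of the stated dimension you need a morphism from your parameter space to $\mathcal M^{\rm KSBA}_{2n-1,n+2}$, hence an actual \emph{stable} family over (a cover of) the open subset of $|L_{ij}|$. The paper builds this in Lemma \ref{lem:stable}: one passes to a $\Z_2^{r}$-cover of $|L_{ij}|$ to make the universal branch divisor $2$-divisible, forms the double cover and the relative canonical model via $\Proj$ of $\oplus_m f_*(m\mathcal L)$ with $\mathcal L=2K_{\Sca/V}+\mathcal C$, and verifies flatness and stability using Lemma \ref{lem:can-ring} and Koll\'ar's numerical criterion (Theorem \ref{thm:numerical}). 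Your proposal treats this as automatic; it is not, and the identification of isomorphism classes of fibers with $\Aut(\F_d)$-orbits also needs the observation that the double cover $\bar S\to Y_{i0}$ is recovered intrinsically from $X$ via the canonical involution.
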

 
\begin{rem} \label{rem:top-dimension-strata} Recall that $\mathcal M^{\rm Hor,2}_n$ has dimension $7n+19$  (cf. Theorem \ref{thm:Ho23b}). By Table \ref{tab:Dn}, $\dim\mathcal D_{n,d}\le 7n+18$ for all $d$  and there is  always at least  one stratum  $\mathcal D_{n,d}$  of dimension $ 7n+18$, namely either $\mathcal D_{n,0}$ or $\mathcal D_{n,1}$ according to whether $n$ is odd or even. Depending on the class of $n$ modulo 4 there is an additional codimension 1 stratum:   $\mathcal D_{4k+1, 2k+2}=\mathcal D_{4k+1, 2k+2}'$   for $n=4k+1$ and $\mathcal D_{4k, 2k+1}''$  
for  $n=4k$ (cf. line 5, respectively line 4 for $n=2d-2$, of Table \ref{tab:Dn}). 
 \end{rem}

\subsection{Proof of Theorem \ref{thm:Dn}}\label{ssec:proof1}

    If   $X$ is a surface in $\mathcal D_n$ with  minimal desingularization $ \eta\colon S\to X$  and $\mathfrak C$ is the $\eta$--exceptional divisor, then   the pair $(S,\mathfrak C)$ arises  as in case (L) of Lemma \ref{lem:-4iota} since $n\ge 14$ by assumption.

We use freely the notation introduced in  Appendix \ref{sec:appendix1}. Fix a ruling $R$ of $\F_d$  and for $i,j\in\{0,1\}$  consider the system $|L_{ij}|$ on the blow-up $Y_{ij}\to \F_d$; assume furthermore that the $4$-tuple   $(n,d,i,j)$ does not fall in case (i) of Proposition \ref{prop:P1P2}, namely that the general curve of $|L_{ij}|$ is reduced. By Proposition \ref{prop:P1P2} the general curve of $|L_{ij}|$ does not contain the strict  transform $R'$ of $R$ and has at most nodes as singularities. So the open set $U_{ij}\subset |L_{ij}|$ consisting of divisors with at most negligible singularities and not containing $R'$  is nonempty.
Given any $\bar B\in U_{ij}$, let $\bar S\to Y_{ij}$ be the double cover branched on $\bar B$ and let $S$ be the minimal resolution of $\bar S$. The surface $S$ is a Horikawa surface of the first kind and the pull-back $\kC$ of $R'$ is a 2--Gorenstein T--chain on $S$. The surface $X$ obtained from $S$ by contracting $\kC$ and all the $(-2)$--curves disjoint from $\kC$ is in $\mathcal D_{n,d}$ and by Proposition \ref{prop:-4} and Proposition \ref{prop:XS} every surface in $\mathcal D_{n,d}$ arises in this way for a suitable choice $i,j\in\{0,1\}$.

 Denote by $\pi\colon \pp^{r_{ij}}\to |L_{ij}|$ the $\Z_2^{r_{ij}}$-cover branched on $r_{ij}+1$ hyperplanes in general position, where $r_{ij}:=\dim |L_{ij}|$.  Choosing suitable homogeneous coordinates on $\pp^{r_{ij}}$ and $|L_{ij}|$ the morphism $\pi$ can be written as $[z_0,\dots z_{r_{ij}}]\mapsto [z_0^2,\dots z_{r_{ij}}^2]$, so $\pi^*M=M^{\otimes 2}$ for all $M\in \Pic(|L|)$.

\begin{lem} \label{lem:stable} Set $V_{ij}:=\pi\inv(U_{ij})$.
There is a   stable family $g_{ij}\colon \Xca_{ij} \to V_{ij}$ such that for all $v\in V_{ij}$ the fiber $g_{ij}\inv(v)$ is the T--singular  stable Horikawa surface of the second kind corresponding to  $\pi(v)$.
 \end{lem}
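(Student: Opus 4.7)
My approach is to construct the family in two stages: first produce a flat family $\bar{\Yca}\to V_{ij}$ whose fibres are the canonical models of the Horikawa surfaces of the first kind parametrized by $U_{ij}$, realized as a relative double cover of $Y_{ij}\times V_{ij}$; then obtain $\Xca_{ij}$ from $\bar{\Yca}$ by the fibrewise contraction of the T-chains and the disjoint $(-2)$-curves, implemented as a relative $\Proj$ using a globally defined line bundle on $\bar{\Yca}$.

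For the first stage, let $\OO_{|L_{ij}|}(1)$ be the tautological bundle on the projective space $|L_{ij}|$; the universal element of $|L_{ij}|$ is the zero locus in $Y_{ij}\times |L_{ij}|$ of the canonical section of $p_1^*\OO(L_{ij})\otimes p_2^*\OO(1)$. Pulling back via $\mathrm{id}\times\pi$ yields a section over $Y_{ij}\times\pp^{r_{ij}}$ of $p_1^*\OO(L_{ij})\otimes p_2^*\OO_{\pp^{r_{ij}}}(2)$. Since $n-d$ is odd, $L_{ij}$ is of the form $2M$ in $\Pic(Y_{ij})$ (it descends from the $2$-divisible class $6D+(n+3+3d)F$ on $\F_d$ with only even multiplicities along the exceptional divisors of $Y_{ij}\to\F_d$), so the pulled-back section is the square of a section of $\mathcal{N}:=p_1^*M\otimes p_2^*\OO_{\pp^{r_{ij}}}(1)$. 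This section determines a flat relative double cover $\bar{\Yca}\to Y_{ij}\times V_{ij}$; for $v\in V_{ij}$, the fibre $\bar S_v$ is the double cover of $Y_{ij}$ branched on $\pi(v)\in U_{ij}$, which, having only negligible branching, has only canonical singularities and is the canonical model of a Horikawa surface of the first kind.

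For the second stage, let $h\colon\bar{\Yca}\to V_{ij}$ be the structure map and $\mathfrak{R}\subset\bar{\Yca}$ the scheme-theoretic preimage of $R'\times V_{ij}$; this is Cartier on $\bar{\Yca}$ because it pulls back a Cartier divisor on the smooth scheme $Y_{ij}\times V_{ij}$. Set $\mathcal{H}:=\omega_{\bar{\Yca}/V_{ij}}^{\otimes 2}(\mathfrak{R})$, a line bundle on $\bar{\Yca}$. On each fibre $\mathcal{H}|_{\bar S_v}=2K_{\bar S_v}+\bar{\kC}_v$ (where $\bar{\kC}_v$ is the image of the T-chain $\kC_v\subset S_v$ in $\bar S_v$), whose pull-back to the minimal resolution equals $L_v=2K_{S_v}+\kC_v$. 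Lemma \ref{lem:can-ring} then gives $\Proj(\bigoplus_m H^0(\bar S_v,\mathcal{H}^{\otimes m}|_{\bar S_v}))\cong X_v$ and the constancy of $h^0(\bar S_v,\mathcal{H}^{\otimes m}|_{\bar S_v})=\chi(\mathcal{H}^{\otimes m}|_{\bar S_v})$ in $v$. Cohomology and base change thus imply that each $h_*\mathcal{H}^{\otimes m}$ is locally free and commutes with base change, so
$$
\Xca_{ij}:=\Proj_{V_{ij}}\Bigl(\bigoplus_{m\ge 0}h_*\mathcal{H}^{\otimes m}\Bigr)
$$
is a proper flat family $g_{ij}\colon \Xca_{ij}\to V_{ij}$ whose fibre over $v$ is $X_v$.

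The fibres $X_v$ are stable surfaces by Proposition \ref{prop:XS}(ii), and $K_{X_v}^2=2n-1$ is constant on $V_{ij}$, so Theorem \ref{thm:numerical} immediately promotes $g_{ij}$ to a stable family. The main technical point is the second stage: building the fibrewise contraction as a global $\Proj$ requires the base-change property for $h_*\mathcal{H}^{\otimes m}$, which in turn rests on Lemma \ref{lem:can-ring}(i). A naive alternative — resolving $\bar{\Yca}$ to a family of smooth Horikawa surfaces and then contracting the resulting T-chains fibrewise — would not in general yield a flat family, since the relative minimal resolution does not behave well across strata where the T-chain changes type, and this is precisely what the canonical-ring formulation of Lemma \ref{lem:can-ring} sidesteps.
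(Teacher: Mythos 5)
Your proof follows the same strategy as the paper's: use the cover $\pi$ to make the branch class $2$-divisible over the parameter space, form the relative flat double cover of $Y_{ij}\times V_{ij}$, realize the fibrewise contraction as $\Proj_{V_{ij}}$ of the section algebra of the relative version of $2K+\kC$ using Lemma \ref{lem:can-ring} to get local freeness and base change, and conclude stability from the constancy of $K^2$ via Theorem \ref{thm:numerical}. The only slip is the phrase ``the pulled-back section is the square of a section of $\mathcal N$'': it is the line bundle, not the section, that is a tensor square (a square section would force the branch divisor to be everywhere non-reduced); with that rewording your argument coincides with the paper's.
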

 \begin{proof}  In the proof we drop the indices ${i,j}$ from the notation for simplicity.
 
Let $\Bca$ be the universal divisor on $Y\times |L|$ and let $\pr_1\colon Y\times |L|\to Y$ and  $\pr_2\colon Y\times |L|\to |L|$ be the projections. The divisor $\Bca-2({\pr}_1^*(\epsi^*(3D+\frac{n+3+3d}{2}F)-E_1-E_2)$ is linearly equivalent to zero on the fibers of $\pr_2$,  so there is a line bundle $M$ on $|L|$ such that $\Bca-2({\pr}_1^*(\epsi^*(3D+\frac{n+3+3d}{2}F)-E_1-E_2)=\pr_2^*M$.  Taking  base change with  $\pi$   we obtain a  family $Y\times \pp^r\to \pp^r$ such that  $\Bca$ pulls back to a divisor $\widetilde{\Bca}$ that is divisible by 2 in $\Pic(Y\times \pp^r)$. 

We then take a flat double cover of $Y\times \pp^r$ branched on $\widetilde{\Bca}$ and restrict it to $Y\times V$.  We denote by $f\colon \Sca\to V$ the family thus obtained: $\Sca$ is Gorenstein, $f$ is flat and the
 fibers  $S_{v}$ are surfaces  with at most canonical singularities such that $K_{S_v}$ is  nef and $K_{S_{v}}^2=2n-2$. In other words, the minimal resolution of  $S_v$  is a Horikawa surface of the first kind. 
  Denote by   $\mathcal C$  the pull back on $\Sca$ of $R'\times V\subset Y\times V$. The restriction of $\mathcal C$ to a fiber $S_v$ of $f$ pulls back to a 2--Gorenstein T--chain $\kC_v$ on the minimal resolution of $S_v$ (cf. Proposition \ref{prop:-4}).

 Now let $\mathcal L:=2K_{\Sca/V}+\mathcal C$ and set $\Xca:=\Proj_V(\oplus_{m\ge 0}f_*(m\mathcal L))$: by Lemma \ref{lem:can-ring},  $f_*(m\mathcal L)$ is locally free for all $m\ge 0$ and the fiber over $v\in V$ of the induced map $g\colon \Xca\to V$ is the stable Horikawa  surface of the second kind $X_v$ obtained from the minimal resolution of $S_v$ by contracting    the T--chain $\kC_v$ and the $(-2)$-curves disjoint from it.  The tautological sheaf $\OO_{\Xca}(1)$ restricts to $2K_{X_v}$ on $X_v$ for all $v\in V$ and so  $g_*(\OO_X(m))$ is locally free, therefore \cite[Thm.~3.20]{kollar-families} implies that $g$ is flat. 
 Finally $K_{X_v}^2=2n-1$ is constant, independent of $v$, so $g$ is stable by \cite[Thm.~5.1]{kollar-families}.
 \end{proof}
  By Lemma \ref{lem:stable} there is a morphism $V_{ij}\to \mathcal M^{\rm KSBA}_{n+2,2n-1}$ that descends to a morphism $\phi_{ij}\colon U_{ij}\to \mathcal M^{\rm KSBA}_{n+2,2n-1}$. We have $\mathcal D'_{n,d}=\phi_{00}(U_{00})\sqcup \phi_{01}(U_{01})$ and $\mathcal D''_{n,d}=\phi_{10}(U_{10})\sqcup \phi_{11}(U_{11})$, 
     with the exception of   the cases $n=2d-3$ and $d=0$, when $\mathcal D''_{n,d}=\emptyset$, of  $n=2d-2$, when $\mathcal D''_{n,d}=\phi_{10}(U_{10})$. This proves (i).
  
    Whenever the general curve of $|L_{ij}|$ is reduced, the curves in $U_{ij}$  that meet $R'$ transversally at two points form a nonempty open set $U_{ij}^0$ by Proposition \ref{prop:P1P2}. The image of $U_{ij}^0$ via $\phi_{ij}$ is contained in $\mathcal D_{n,d}^0$, so $\mathcal D_{n,d}^0$ is dense in $\mathcal D_{n,d}$ and also claim (ii) is  proven. 
  
\begin{lem}\label{lem:01} 
 \begin{enumerate}
 \item For $i=0,1$ and $d>0$,  the set $\phi_{i1}(U_{i1})$  is contained  in the closure of $\phi_{i0}(U_{i0})$.
 \item For $n\ge 3d-1$, the set $\phi_{10}(U_{10})$  is contained  in the closure of $\phi_{00}(U_{00})$.
 \end{enumerate}
   \end{lem}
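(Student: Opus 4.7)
The plan is to establish both containments by constructing explicit one-parameter degenerations of the branch divisor on $\F_d$, then applying the family construction of Lemma~\ref{lem:stable} and invoking the projectivity of $\mathcal M^{\rm KSBA}_{n+2,2n-1}$ to conclude that the resulting limit point lies in the closure.

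For part $(i)$, fix $[X_0]\in\phi_{i1}(U_{i1})$, corresponding to a branch divisor $\bar B_0$ on the blow-up $Y_{i1}\to\F_d$ at $P_1$ and an infinitely near point $P_2$ in the direction of the fixed ruling $R$. I would choose a smooth germ $(T,0)$ and a section $t\mapsto P_2^{(t)}$ of $\F_d\times T\to T$ with $P_2^{(0)}$ equal to the length-$2$ curvilinear subscheme at $P_1$ along $R$, and $P_2^{(t)}\in R\setminus\{P_1\}$ a proper point for $t\ne 0$. Blowing up $\F_d\times T$ along the constant section at $P_1$ and along this variable section yields a smooth family $\mathcal Y\to T$ whose fibre is $Y_{i0}$ for $t\ne 0$ and $Y_{i1}$ at $t=0$. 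The two singularity types required of the branch curve at those points (two ordinary nodes on $R$ versus a triple point with tangent $R$) are flat specialisations of one another of the same colength, so the pushforward of the line bundle cutting out $|L_{ij}|$ fibrewise is locally free on $T$ and $\bar B_0$ extends to a flat family $\{\bar B_t\}$ with $\bar B_t\in U_{i0}$ for $t\ne 0$. The relative version of Lemma~\ref{lem:stable} then produces a stable family of Horikawa surfaces of the second kind over $T$ whose special fibre is $X_0$ and whose general fibre lies in $\phi_{i0}(U_{i0})$; projectivity of $\mathcal M^{\rm KSBA}_{n+2,2n-1}$ gives $[X_0]\in\overline{\phi_{i0}(U_{i0})}$.

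Part $(ii)$ is completely parallel, but this time with $P_1$ as the moving point. I would pick a smooth section $t\mapsto P_1^{(t)}$ of $\F_d\times T\to T$ with $P_1^{(0)}\in D$ and $P_1^{(t)}\notin D$ for $t\ne 0$, keeping $P_2$ as a fixed proper point on the same ruling as $P_1^{(0)}$. The hypothesis $n\ge 3d-1$ enters exactly here: since $(6D+(n+3+3d)F)\cdot D = n+3-3d$, after subtracting the contribution of the double point at $P_1^{(0)}\in D$ required for membership in $|L_{10}|$, the residual divisor still has non-negative intersection with $D$ iff $n\ge 3d-1$. This prevents $D$ from being forced as a fixed component of the limiting branch curve and guarantees that $\dim|L_{00}|=\dim|L_{10}|$, so that the family of branch curves on the blow-up of $\F_d\times T$ along the two sections is again flat. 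The rest of the argument runs parallel to part $(i)$.

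The main obstacle I expect is the verification that the constructed family of branch curves over $T$ is genuinely flat, i.e.\ that the singularity conditions glue into a single coherent Cartier condition on the total space of $\mathcal Y\to T$ without cohomology jump on the special fibre. This reduces to a rank comparison for the linear systems $|L_{ij}|$ already worked out in Appendix~\ref{sec:appendix1}; in case $(ii)$ it is precisely the numerical hypothesis $n\ge 3d-1$ that forces these ranks to agree across the specialisation, and without it one would pick up an extra component of $D$ in the limit and lose the desired containment.
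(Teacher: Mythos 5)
Your argument is correct and follows essentially the same route as the paper: degenerate $Y_{i0}$ to $Y_{i1}$ (resp.\ $Y_{00}$ to $Y_{10}$) in a smooth family over a curve, use the equality of dimensions of the systems $|L_{ij}|$ from Proposition~\ref{prop:P1P2} to extend the branch divisor flatly from the special fibre, and conclude via the construction of Lemma~\ref{lem:stable} and properness of the moduli space --- and the role you assign to the hypothesis $n\ge 3d-1$ in (ii) is exactly the paper's stated key point. The only slip is describing the $j=1$ condition as a ``triple point with tangent $R$'': membership in $|L_{i1}|$ imposes a double point at $P_1$ together with a double point at the infinitely near point $P_2$ (a tacnodal condition along $R$), but this is immaterial since your flatness claim is ultimately justified by the rank comparison from Appendix~\ref{sec:appendix1}, not by that description.
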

  \begin{proof} (i) It is enough to show that a  surface $X$  in $\phi_{i1}(U_{i1})$ is a limit of surfaces in $\phi_{i0}(U_{i0})$.   It is not hard to construct explicitly a smooth family $h\colon \Yca\to T$ of surfaces  such that:
  \begin{itemize}
  \item  $T$ is  a smooth curve and
  \item  there is $\bar t\in T$ such that $Y_{\bar t}=Y_{i1}$ and $Y_{t}$ is isomorphic to $Y_{i0}$ if $t\ne \bar t$.
    \item $\Yca$ carries a line bundle $\mathcal L$ whose restriction to $Y_t$ is isomorphic to $L_{i0}$ if $t\ne \bar t$ and is isomorphic to $L_{i1}$ if $t=\bar t$.
    \end{itemize}
  By Proposition \ref{prop:P1P2} $h^0(\mathcal L|_{Y_t})$ is constant, hence $h_*{\mathcal L}$ is a vector bundle. Therefore, up to shrinking $T$, the divisor $\bar B\in U_{i1}$ on $Y_{i1}$ that maps to $[X]\in \mathcal D_{n,d}$ can be extended to a divisor $\mathcal B$ on $\Yca$ such that the restriction of $\mathcal B$ to $Y_t$ is in $U_{i0}$ for $t\in  T$, $t\ne \bar t$. Arguing as in the proof of Lemma \ref{lem:stable}, one shows that, up to further shrinking $T$ and taking base change with a finite cover, there is a  stable family $\mathcal X\to T$ of  Horikawa surfaces of the second kind  such that $X_{\bar t}=X$ and $[X_t]\in \phi_{i0}(U_{i0})$ for $t\ne \bar t$.  
  \medskip
  
 (ii)  The proof is very similar to that of (i) and so we leave it to the reader. The key point is that for $n\ge 3d-1$ the systems $|L_{00}|$ and $|L_{10}|$ have the same dimension. \end{proof}
     
Lemma \ref{lem:01} implies that $\mathcal D_{n,d}'$ and $\mathcal D_{n,d}''$ are irreducible, 
 of dimension  equal to the dimension of $\phi_{00}(U_{00})$, respectively $\phi_{10}(U_{10})$. To compute these dimensions we note first of all that $\bar B_1, \bar B_2\in U_{i0}^0$ give rise to isomorphic surfaces $\bar X_1$, $\bar X_2$ if and only if there is an automorphism of $Y_{i0}$ mapping  $\bar B_1$   to $\bar B_2$. In fact,    denoting  by  $\bar S_j\to Y_{i0}$ the double cover branched on $\bar B_j$, $j=1,2$, the surface $\bar S_j$ is the minimal resolution of the (only) non--canonical singularity of $X_j$ and the covering map is induced by the canonical involution. 
 Hence $\dim \mathcal D'_{n,d}=\dim |L_{00}|-\dim\Aut(Y_{00})$ and $\dim \mathcal D''_{n,d}=\dim |L_{10}|-\dim\Aut(Y_{10})$. So the dimensions can be computed using Table \ref{tab:M} and recalling that:
 \begin{itemize}
 \item  $\dim\Aut(Y_{00})=3$ if $d=0$ and $\dim\Aut(Y_{00})=d+2$ otherwise
 \item  $\dim\Aut(Y_{10})=d+3$ for $d>0$.
 \end{itemize}

The last step in the proof consists in determining whether $\ol{\mathcal D_{n,d}'}$ and $\ol{\mathcal D_{n,d}}''$ are irreducible components of $\ol{\mathcal D_{n,d}}$. 

  Lemma \ref{lem:01} implies that   if  $n\ge 3d-1$ then  $\ol{\mathcal D_{n,d}}$ is equal to $\ol{\mathcal D'_{n,d}}$  and therefore is irreducible.

 For $n=3d-3$ the sets $\mathcal D_{n,d}'$ and $\mathcal D_{n,d}''$ have the same dimension and therefore the closures of both are components of $\ol{\mathcal D_{n,d}}$. 
 
If  $ 2d-2\le n< 3d-3$ we have  $\dim\mathcal D_{n,d}''> \dim \mathcal D_{n,d}'$, so the question is whether $\mathcal D_{n,d}'$ lies in the closure of $\mathcal D_{n,d}''$. 
We are going to show that the  answer is no. In order to do this we need the following observation:
 \begin{lem} Let $X$ be a  surface in $\phi_{ij}(U_{ij}^0)$ and let $\iota$ be the canonical involution of $X$. The surface $X/\iota$
is isomorphic to the surface $\ol{Y_{ij}}$ obtained from $Y_{ij}$ by contracting  $R'$ to a singular point of type $A_1$
\end{lem}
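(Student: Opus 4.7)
The plan is to lift $\iota$ to the minimal resolution $\eta\colon S\to X$, identify $S/\iota$ explicitly there, and then descend.

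First I would check that $\iota$ descends to an involution of $X$. Since $\bar B\in U_{ij}^0$ meets $R'$ transversally at two points and $R'\not\subset \bar B$, the $(-4)$-curve $C=\kC$ is a $2{:}1$ cover of $R'$, so $\iota(C)=C$, and $\iota|_C$ is the hyperelliptic involution ramified over $R'\cap \bar B$. The remaining curves contracted by $\eta$ are $(-2)$-curves lying above singular points of $\bar B$, and these are automatically $\iota$-invariant.

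Next I would identify $S/\iota$. By the standard description of the minimal resolution of a double cover with ADE branch divisor, $S/\iota$ is canonically the blow-up $\hat Y\to Y_{ij}$ minimally resolving $\bar B$; under this identification $C\to R'$ is the hyperelliptic quotient, and each remaining $(-2)$-curve of $S$ maps $2{:}1$ onto an exceptional component of $\hat Y\to Y_{ij}$. Since $\eta$ is $\iota$-equivariant, it descends to a morphism $\hat Y=S/\iota\to X/\iota$.

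The main step — and the anticipated main obstacle — is a local analysis at the $\frac 14(1,1)$ singularity of $X$. Using Remark~\ref{rem:sing-inv} I would show that the involution induced by $\iota$ on this singularity is precisely $\bar\tau$: $\iota|_C$ is the hyperelliptic involution with two fixed points lying over $R'\cap \bar B$, and lifting to the index $A_1$ cover of $\frac 14(1,1)$ recovers the involution $\tau\colon(x,y,z)\mapsto (x,y,-z)$ of the remark (whose quotient on $\frac 14(1,1)$ is the $A_1$ singularity $w^2=uv$). An analogous local check at each of the other fixed points of $\iota$ on $X$ (the canonical singularities arising from the $\iota$-invariant $(-2)$-curves of $S$ disjoint from $C$) shows that these are quotients with smooth target, so they map to smooth points of $X/\iota$.

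Assembling the pieces, the descended morphism $\hat Y\to X/\iota$ contracts $R'$ to an $A_1$ point and each exceptional component of $\hat Y\to Y_{ij}$ to a smooth point. It therefore factors as $\hat Y\to Y_{ij}\to \ol{Y_{ij}}$, where the first arrow is the blow-down to $Y_{ij}$ and the second contracts $R'$, yielding the desired isomorphism $X/\iota\cong \ol{Y_{ij}}$.
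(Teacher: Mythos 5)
Your global architecture (descend $\eta\colon S\to X$ to a birational morphism $S/\iota=\hat Y\to X/\iota$ and recognize $X/\iota$ as a contraction of $\hat Y$) can be made to work, but the step you yourself flag as the main one contains a genuine gap. The claim that the involution induced by $\iota$ at the $\frac 14(1,1)$ point ``lifts to the index one cover as $\tau$'' does not follow from the only input you invoke, namely that $\iota$ preserves the exceptional $(-4)$-curve $C$ and acts on it with two fixed points. Writing $\frac 14(1,1)=\C^2/\langle \mathrm{diag}(i,i)\rangle$, the involution induced by $\mathrm{diag}(\zeta_8,-\zeta_8)$ also acts on the exceptional curve of the minimal resolution with exactly two fixed points, yet it is not conjugate to $\bar\tau$: its quotient is the cyclic singularity $\frac 18(1,5)$, not $A_1$. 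What distinguishes $\bar\tau$ is that its fixed locus is divisorial near the singular point (while the other involution has an isolated fixed point there), so to carry out your local identification you would additionally have to use that the ramification curve of $S\to \hat Y$ passes through the two fixed points of $\iota|_C$; as written, the local statement is asserted rather than proved.

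The local analysis is in any case unnecessary. Within your own setup, once you know that $\hat Y\to X/\iota$ is a proper birational morphism onto a normal surface contracting exactly $R'$ and the exceptional components of $\hat Y\to Y_{ij}$ (which follows from degree considerations and normality of the quotient, with no knowledge of the singularity types of $X/\iota$), uniqueness of normal contractions already yields $X/\iota\cong\ol{Y_{ij}}$; knowing beforehand that the image of the $\frac 14(1,1)$ point is an $A_1$ point is not needed for the factorization. The paper is even more economical: the composite $\bar S\to Y_{ij}\to \ol{Y_{ij}}$ contracts $C$, hence factors through the contraction $\bar S\to X$ and induces a finite degree $2$, $\iota$-invariant morphism $X\to\ol{Y_{ij}}$; the resulting map $X/\iota\to\ol{Y_{ij}}$ is finite and birational between normal surfaces, hence an isomorphism. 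Note that the identification of the local action at the $\frac 14(1,1)$ point with $\bar\tau$ is deduced in the paper (in the proof of Theorem \ref{thm:smooth}) as a consequence of this lemma, via the structure of $S_2$ double covers; using it as an ingredient of the proof, as you do, puts the cart before the horse unless you supply the missing local argument.
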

\begin{proof}  The surface $X$ is obtained from a double cover  $\bar S\to Y_{ij}$ by contracting the preimage $C$ of $R'$, which is a $(-4)$-curve of $S$. The composition $\bar S\to Y_{ij}\to \ol{Y_{ij}}$ contracts $C$ and therefore induces a degree 2 morphism $\pi \colon X\to \ol{Y_{ij}}$. The morphism $\pi$ is $\iota$-invariant, so it induces a birational morphism $X/\iota\to \ol{Y_{ij}}$. Since both surfaces are normal, this is actually an isomorphism. 
\end{proof}
Assume now by contradiction that  given a general surface $X$ in  $\mathcal D_{n,d}'$ there is a
 stable  family $\Xca\to T$, with $T$ a smooth curve, such that $X_{\bar t}=X$ for some $\bar t\in T$ and $X_t$ is general in  $\mathcal D_{n,d}''$ for all $t\in T$ with $\bar t\ne t$. Taking the quotient by the canonical involution we obtain a family $\Yca\to T$ such that $Y_{\bar t}=\ol{Y_{00}}$ and $Y_t$ is isomorphic to $\ol{ Y_{10}}$ for $t\ne \bar t$. By \cite[\S 2.1]{kollar-families}, there is  a relative canonical sheaf $K_{\mathcal Y/T}$ which restricts to $K_{Y_t}$  on each fiber $Y_t$. In particular   $K_{\mathcal Y/T}$ is invertible (hence flat over $T$) because all  fibers $Y_t$ are Gorenstein. 
  On the other hand  $d\geq 6$, because we are assuming $n\ge 14$  and $n< 3d-3$,  hence standard 
computations, similar to those in Appendix \ref{sec:appendix1}, give $h^0(-K_{{\ol Y}_{00}})=h^0(-K_{Y_{00}})=d+4$ and  $h^0(-K_{\ol{Y}_{10}})=h^0(-K_{Y_{10}})=d+5$,  contradicting to the Semicontinuity Theorem. So $\ol{\mathcal D_{n,d}}$ has two components in this case. 

Finally, for $n=2d-3$ the subset $\mathcal D_{n,d}''$ is empty by definition, so there is nothing to prove and the proof is complete.

\begin{rem}\label{rem:Dn-dim}
Let $X$ correspond to a general point in an irreducible component of $\mathcal D_{n,d}$: then $X$ can have some singularities of type $A_1$ besides the $1/4(1,1)$ singularity.  Denoting by $\nu$ the number of these singularities Theorem \ref{thm:Dn} and its proof show:
\begin{itemize}
\item if  $n\ge 3d-3$ and for $n=2d-3$ one has  $\nu=0$ 
\item if $2d-2\le n\le 3d-5$ then $\nu = n+3-2d$ if $[X]$ belongs to   $\mathcal D_{n,d}'$ and $\nu = n+2-2d$  if $[X]$ belongs to   $\mathcal D_{n,d}''$. 

\end{itemize}
So, looking at Table \ref{tab:Dn}, we see that if $\nu>0$  $[X]$ belongs to a family of  equisingular deformations depending on $7n+18-\nu$ moduli. 
\end{rem}

\section{Explicit degenerations and smoothings of Horikawa surfaces}\label{sec:explicit}

Here we are going to describe explicitly degenerations of Horikawa surfaces of the second kind with canonical singularities to stable surfaces with a $\frac14(1,1)$-singularities and $\Q$--Gorenstein  smoothings of the general surface in (one of the components of) $\mathcal D_{n,d}$. It is worth noting (cf. Proposition \ref{prop:explicit}) that  this  method  works only  for some of  the components of the strata of $\mathcal D_{n,d}$ described in Section \ref{sec:Hor-sing}.  In particular it does not work in the case $n=2d-3$, that gives  a top dimensional stratum of $\mathcal D_n$.   In the next section we are going to prove by deformation theoretic methods that all the surfaces in $\mathcal D_n$ admit a $\Q$--Gorenstein smoothing.

We use freely the notation introduced in the previous sections and in Appendix \ref{sec:appendix1}.

 As we saw in Remark \ref {rem:liup}, for fixed $n,d$ such that $n-d$ is odd and $n\ge\max\{d+3, 2d-2\}$  the general surface in 
         $\mathcal M_{n,d}^{\rm Hor,  2}$ is a double cover $g\colon \bar S\to\ol{ Y}_{i0}$ where:
 \begin{itemize}
 \item  $i=0$  if $n\ge 3d-3$ and $i=1$ if $2d-2\le n\le 3d-5$. (However in the degeneration we will also consider surfaces with $i=1$ and $n=3d-3$).
 \item $\ol{ Y_{i0}}$ is obtained from $Y_{i0}$ by contracting $R'$ to  a point $x$ of type $A_1$. 
 \item $g$ is branched at $x$ and along a curve $\mathfrak B$ with irrelevant singularities not containing $x$. More precisely, $\mathfrak B$ is  the image of a general element of $|N_{i0}|:=|\epsi_{i0}^*(6D+(n+4+3d)F)-3E_1-3E_2|$.  
 \end{itemize} 
Note that $|N_{i0}|$ contains the subsystem $R'+|L_{i0}|$. Choose $B_0\in |L_{i0}|$ general and choose a general pencil containing  $B_0+R'$ in $|N_{i0}|$. Using such a pencil and arguing as in the proof of Lemma \ref{lem:stable}   one can construct a  double cover  $\mathcal X_{0} \to Y_{i0}\times T$, with $T$ a smooth curve, branched on a  divisor $\mathcal B$ that restricts to $B_0+2R'$ on $Y_{i0}\times\{\bar t\}$ for some  $\bar t\in T$ and restricts to $B_t+R'$ with $B_t\in |N_{i0}|$ general for all $t\ne \bar t$; in particular, for $t\ne \bar t$ the divisor $B_t$ has negligible singularities and does not contain $R'$. 

We can restrict $\mathcal X_0$ to $(Y_{i0}\setminus R')\times T=(\ol{Y}_{i0}\setminus\{x\})$ and then take its $S_2$-closure  $\mathcal X\to \ol{Y}_{i0}\times T$. The resulting family $\mathcal X\to T$ is flat and $\mathcal X$ is Cohen-Macaulay by Lemma \ref{lem:CM}. So the fibers $X_t$ of $\mathcal X\to T$ are $S_2$ and therefore they are  double covers of $\ol{Y}_{i0}$ branched on $B_t$ and the singular point $x$ for $t\ne \bar t$ and they are Horikawa surfaces of the second kind of type (d) by Remark \ref{rem:liup}.

 Consider now the fiber $X_{\bar t}$ over $\bar t$. It is the double cover of $\ol{Y}_{i0}$ branched on the image $\bar B_0$ of $B_0$ in $\ol{Y}_{i0}$. There is a commutative diagram, obtained by base change plus normalization:
 \begin{equation}\label{diag:explicit}
\xymatrix{
\bar S\ar[r]^{f }  \ar[d]_{\pi}&  X_{\bar t}\ar[d]^{\bar \pi} \\
Y_{i0} \ar[r]_{\eta}&  \ol{Y}_{i0}}
\end{equation}
where $\pi$ is a double cover branched on $B_0$, $\bar\pi$ is a double cover branched on the image $\bar B_0$ of $B_0$ and $\eta$ is the contraction of $R'$ to an $A_1$ point. 
The surface $\bar S$ is the canonical model of a Horikawa surface of the first kind of type (d) and the preimage $C$ of $R'$ is a $(-4)$--curve. The morphism $f$ contracts the preimage $C$ of $R'$ and is an isomorphism on the complement of $C$. Since $X_{\bar t}$ is normal, this shows that $X_{\bar t}$ is a stable 2--Gorenstein T--singular  Horikawa surface 
of the second kind of type (d) (cf. Proposition \ref{prop:XS}).
More precisely, $X_{\bar t}$ corresponds to a general point of $\mathcal D'_{n,d}$ if $n>3d-4$ and to a general point of $\mathcal D_{n,d}''$ if $2d-2\le n<3d-4$.
So all the fibers of $\mathcal X\to T$ are stable surfaces and $K_{X_t}^2$ is constant, so $\mathcal X\to T$ is a stable family by  \cite[Thm.~5.1]{kollar-families}.

Since $B_0$ is chosen general, one can arrange that $X_{\bar t}$ is any surface in $\phi_{00}(U_{00})$ if $n>3d-4$ and corresponds to  a general point of $\phi_{10}(U_{10})$ if $2d-2\le n<3d-4$ (cf. \S \ref{ssec:proof1}). The above discussion is summarized and completed in the following:

\begin{prop}\label{prop:explicit}
 Let $n,d\in \mathbb N$ be such that $n-d$ is odd and $n\ge \max\{d+3, 2d-2\}$; denote by $\ol{\mathcal  M_{n,d}^{\rm Hor,2}}$ the closure of $\mathcal  M_{n,d}^{\rm Hor,2}$. Then 
\begin{enumerate}
\item if $n\ge 3d-3$, then $\ol{\mathcal  M_{n,d}^{\rm Hor,2}}$ contains $\mathcal D_{n,d}$;
\item if $2d-2\le n\le 3d-5$, then $\ol{\mathcal  M_{n,d}^{\rm Hor,2}}$ contains $\mathcal D_{n,d}''$ and does not contain $\mathcal D_{n,d}'$
\end{enumerate} In particular $\ol{\mathcal D_n}\cap \ol{\mathcal  M_{n,d}^{\rm Hor,2}}$ has at least a codimension 1 component.
\end{prop}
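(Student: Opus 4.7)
The plan is to extract both inclusions from the explicit pencil construction carried out in the paragraphs immediately preceding the statement, and then to settle the one point that is not addressed there, namely the non-inclusion $\mathcal D'_{n,d}\not\subset\ol{\mathcal M_{n,d}^{\rm Hor,2}}$ in the range $2d-2\leq n\leq 3d-5$.

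For the inclusions I would set $i=0$ if $n\geq 3d-3$ and $i=1$ if $2d-2\leq n\leq 3d-5$, fix a general $B_0\in|L_{i0}|$, and move $B_0+R'$ in a general pencil of $|N_{i0}|$ over a smooth affine curve $T$. The construction before the statement---double cover of $Y_{i0}\times T$, $S_2$-closure on $\ol{Y_{i0}}\times T$, Cohen--Macaulayness via Lemma \ref{lem:CM}---produces a flat family $\mathcal X\to T$ whose generic fibers are points of $\mathcal M_{n,d}^{\rm Hor,2}$ by Remark \ref{rem:liup} and whose special fiber lies in $\phi_{i0}(U_{i0})\subset\mathcal D_{n,d}$ by diagram \eqref{diag:explicit} and Proposition \ref{prop:XS}. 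Constancy of $K^2$ plus Theorem \ref{thm:numerical} upgrade $\mathcal X\to T$ to a stable family, and the induced morphism $T\to\mathcal M^{\rm KSBA}_{2n-1,n+2}$ witnesses $\mathcal D_{n,d}\subset\ol{\mathcal M_{n,d}^{\rm Hor,2}}$ in case (i) and $\mathcal D''_{n,d}\subset\ol{\mathcal M_{n,d}^{\rm Hor,2}}$ in case (ii); generality of $B_0$ ensures that as it varies the special fibers sweep a dense open subset of the relevant irreducible component of $\mathcal D_{n,d}$.

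For the non-inclusion I would recycle the semicontinuity argument that concludes the proof of Theorem \ref{thm:Dn}. Suppose for contradiction that a stable family $\mathcal X\to T$ over a smooth curve $T$ satisfies $X_{\bar t}\in\mathcal D'_{n,d}$ general and $X_t\in\mathcal M_{n,d}^{\rm Hor,2}$ general for $t\neq\bar t$. Propagating the canonical involution in families via \cite[Cor.~8.65]{kollar-families} and passing to the quotient produces a flat Gorenstein family $\mathcal Y\to T$ with $Y_{\bar t}\cong\ol{Y_{00}}$ and $Y_t\cong\ol{Y_{10}}$ for $t\neq\bar t$. In the specified range the assumption $n\geq 14$ forces $d\geq 6$, and cohomology bookkeeping of the type in Appendix \ref{sec:appendix1} yields $h^0(-K_{\ol{Y_{00}}})=d+4<d+5=h^0(-K_{\ol{Y_{10}}})$, contradicting semicontinuity of the relative anticanonical cohomology. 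The final codimension-$1$ assertion is then immediate from Table \ref{tab:Dn} and Proposition \ref{prop:countb}: in each admissible case the stratum of $\mathcal D_{n,d}$ that lands in $\ol{\mathcal M_{n,d}^{\rm Hor,2}}$ has dimension exactly one less than $\mathcal M_{n,d}^{\rm Hor,2}$.

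I expect the main obstacle to be this last step, where one has to confirm both that the canonical involutions of the fibers glue to an involution of the total space of an arbitrary stable degeneration (so that the quotient $\mathcal Y\to T$ is a genuine flat Gorenstein family on which semicontinuity applies) and that the cohomological gap between $\ol{Y_{00}}$ and $\ol{Y_{10}}$ persists throughout the parameter range.
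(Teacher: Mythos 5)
Your proposal follows the paper's proof almost verbatim: the inclusions come from the explicit pencil degeneration of $B_t+R'$ to $B_0+2R'$ constructed in the paragraphs preceding the statement, the non-inclusion from semicontinuity of $h^0(-K)$ applied to the quotient of a putative stable family by the canonical involution (exactly the argument that closes the proof of Theorem \ref{thm:Dn}), and the final claim from the dimension counts. The one point you miss is the boundary case $n=3d-3$ of assertion (i): there $\mathcal D_{n,d}$ has \emph{two} irreducible components $\mathcal D'_{n,d}$ and $\mathcal D''_{n,d}$ of equal dimension (Table \ref{tab:Dn}), so $\mathcal D''_{n,d}$ is not contained in $\ol{\mathcal D'_{n,d}}$ (Lemma \ref{lem:01}(ii) only applies for $n\ge 3d-1$), while your degeneration with $i=0$ only sweeps $\phi_{00}(U_{00})\subset\mathcal D'_{n,d}$. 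To reach $\mathcal D''_{n,d}$ you must also run the construction with $i=1$ when $n=3d-3$, as the paper notes parenthetically in its setup; the general member of the pencil in $|N_{10}|$ is then a non-generic but still admissible branch divisor (it acquires $D'$ as a component), so the generic fibre still lies in $\mathcal M^{\rm Hor,2}_{n,d}$ and the special fibre sweeps $\phi_{10}(U_{10})$. With that addition the argument is complete; in particular your handling of the involution in families via \cite[Cor.~8.65]{kollar-families} and the computation $h^0(-K_{\ol{Y}_{00}})=d+4<d+5=h^0(-K_{\ol{Y}_{10}})$ coincide with the paper's.
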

\begin{proof} The first assertion of part (ii) and part  (i) have already been proven. To complete the proof  of (ii), we just have to show that for  $n\le 3d-5$ the general surface in $\mathcal D_{n,d}'$ cannot be a stable limit of surfaces in $\mathcal  M_{n,d}^{\rm Hor,2}$. This can be done by using the argument at the end of Section \ref{ssec:proof1}.

The final claim follows by Theorem \ref{thm:Ho23b} and Theorem  \ref{thm:Dn}.
\end{proof}

\section{Local structure of $\mathcal M_{2n-1,n+2}^{\rm KSBA }$   near a general point of $\mathcal D_{n,d}$}\label{sec:infinitesimal}

We keep the notation from the previous sections.

\subsection{Statement of the results}\label{ssec:results} 
The main results of this section are the following:
\begin{thm} \label{thm:smooth}
Let $n$ and $d$ be nonnegative integers such that $n-d$ is odd and  $n\ge\max\{2d-3, d+3\}$.
Let $X$ be general in an irreducible  component of $\mathcal D_{n,d}$.
Then:
\begin{enumerate} 
\item  $[X]$ belongs to  the closure $\ol{\mathcal M_{2n-1,n+2}}$  of $\mathcal M_{2n-1,n+2}$  inside $\mathcal M_{2n-1,n+2}^{\rm KSBA}$ 
and $\mathcal M_{2n-1,n+2}^{\rm KSBA}$ is smooth at $[X]$ 
\item  $\ol{\mathcal D_n}$  is a  smooth divisor at $[X]$.
 \end{enumerate}
\end{thm}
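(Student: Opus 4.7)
The plan is deformation-theoretic, based on the local-to-global sequence \eqref{eq: local-to-global}. By Remark \ref{rem:Dn-dim}, a surface $X$ general in an irreducible component of $\mathcal D_{n,d}$ has exactly one $\frac 14(1,1)$-singularity and some number $\nu\ge 0$ of additional nodes. Since the local obstruction sheaf $\Tc^2_{QG}$ vanishes on T--singularities and the stalk of $\Tc^1_{QG}$ is one-dimensional at each of them (cf.\ \S\ref{ssec: Tdef}), one has $h^0(\Tc^1_{QG})=\nu+1$ and \eqref{eq: local-to-global} reduces matters to the cohomology of $T_X$. By Proposition \ref{prop:Tlog}, $H^i(T_X)\cong H^i(T_S(-\log E))$, where $f\colon S\to X$ is the minimal resolution and $E=C+\sum_{j=1}^\nu A_j$ is the sum of the $(-4)$-curve $C$ and the $(-2)$-curves $A_j$ contracted by $f$.

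The main step is to compute $H^i(T_S(-\log E))$ by exploiting the canonical involution $\iota$ on $S$ and the presentation of $S$ as a (birational) double cover of a blow-up $Y=Y_{ij}$ of $\F_d$ branched on the divisor recalled in Remark \ref{rem:liup}. Since $E$ is $\iota$-invariant, $T_S(-\log E)$ carries a $\Z/2$-action whose invariant summand is a sheaf on $Y$ controlling deformations of the pair $(Y,B)$, while the anti-invariant summand encodes deformations of the line bundle defining the double cover. Both pieces can be pushed down to $\F_d$ and handled via standard vanishings on Hirzebruch surfaces; the hypothesis $n\ge\max\{2d-3,d+3\}$ enters precisely to provide the positivity needed to kill the relevant $H^2$-terms.

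The aim is to deduce $H^2(T_S(-\log E))=0$ and $\dim H^1(T_S(-\log E))=7n+18-\nu$. Feeding these into \eqref{eq: local-to-global} yields $\T^2_{QG}=0$, so $\mathcal M^{\rm KSBA}_{2n-1,n+2}$ is smooth at $[X]$, together with $\dim \T^1_{QG}=7n+19$ and surjectivity of the connecting map $\T^1_{QG}\to H^0(\Tc^1_{QG})$. Combined with the bound $\dim \mathcal D_n\le 7n+18$ from Theorem \ref{thm:Dn}, this forces the unique component of $\mathcal M^{\rm KSBA}_{2n-1,n+2}$ through $[X]$ not to be contained in $\mathcal D_n$, hence to meet the Gieseker locus of canonical Horikawa surfaces of the second kind; thus $[X]$ lies in $\ol{\mathcal M_{2n-1,n+2}}$, yielding part (i). For part (ii), the stalk of $\Tc^1_{QG}$ at the $\frac 14(1,1)$-point is exactly the smoothing direction (cf.\ \S\ref{ssec: Tdef}), so by surjectivity above the locus $\ol{\mathcal D_n}$ is cut out at $[X]$ by the vanishing of a single nonzero linear form on $\T^1_{QG}$ and is therefore a smooth divisor there.

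The hard part will be the vanishing $H^2(T_S(-\log E))=0$ and the precise count $\dim H^1(T_S(-\log E))=7n+18-\nu$. These rest on the short exact sequence
\begin{equation*}
 0\to T_S(-\log E)\to T_S\to \bigoplus_i \OO_{E_i}(E_i)\to 0,
\end{equation*}
on the analysis of $H^i(T_S)$ through the canonical double cover $\pi\colon S\to Y$ -- where the $\iota$-anti-invariant part is governed by the line bundle defining $\pi$ and thus involves a twist by $\epsi^*(3D+\tfrac{n+3+3d}{2}F)-E_1-E_2$ on $Y$ -- and on the extraction of $\iota$-eigenspaces. Once this input is in hand, identifying the normal direction to $\ol{\mathcal D_n}$ with the $\frac 14(1,1)$-smoothing component of $\T^1_{QG}$ is a formal consequence.
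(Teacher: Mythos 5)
Your overall framework -- reducing to $H^i(T_S(-\log E))$ via Proposition \ref{prop:Tlog}, splitting into $\iota$-eigensheaves on $Y$, and feeding the result into the local-to-global sequence \eqref{eq: local-to-global} -- is the same as the paper's. But the key vanishing you plan to prove, $H^2(T_S(-\log E))=0$, is false. The anti-invariant summand of $f_*T_S(-\log E)$ is $T_Y(-\log\Delta)(-L)$, and its $H^2$ is Serre-dual to $H^0(\Omega^1_Y(\log\Delta)(K_Y+L))$, which has dimension $n-3$ (this is Lemma \ref{lem:T-}, resting on $h^0(\Omega^1_{\F_d}(H))=n-3$ for $H=D+\tfrac12(n+d-1)F$). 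So $h^2(T_X)=n-3\neq 0$ for all $n$ in the relevant range, and the naive conclusion ``$H^2=0$ hence $\T^2_{QG}=0$ hence smooth'' is not available. Only the $\iota$-invariant part of $H^2(T_X)$ vanishes.

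The missing idea is to run the local-to-global sequence equivariantly and to combine two facts: (a) $H^2(T_X)$ is purely \emph{anti}-invariant while $H^1(T_X)$ is purely invariant (Theorem \ref{thm:TX}); and (b) $H^0(\Tc^1_{QG})$ is purely \emph{invariant}, which requires the explicit local description of how $\iota$ acts at the $\frac14(1,1)$-point and at the nodes (Remark \ref{rem:sing-inv}). From the invariant sequence one gets $(\T^2_{QG})^+=0$, so $\Def_{QG}(X,\iota)$ is smooth and $\T^1_{QG}\to H^0(\Tc^1_{QG})$ is surjective; from the anti-invariant sequence and (b) one gets $(\T^1_{QG})^-=0$. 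Then $\Def_{QG}(X,\iota)\to\Def_{QG}(X)$ is bijective on tangent spaces and injective on obstruction spaces, hence \'etale, and smoothness of $\Def_{QG}(X)$ follows even though $\T^2_{QG}\cong H^2(T_X)^-$ is nonzero. The surjectivity onto $H^0(\Tc^1_{QG})$ then gives both the smoothability in (i) (more directly than your dimension-count, which by itself only shows the component is not contained in $\mathcal D_n$, not that it meets the Gieseker locus) and the description of $\ol{\mathcal D_n}$ as the smooth divisor cut out by the composition $\T^1_{QG}\to \Tc^1_{QG,x}$ in (ii). Without step (b) and the eigenspace bookkeeping, your argument cannot close.
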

Combining Theorem \ref{thm:smooth} with the results in the previous sections we obtain:
\begin{thm} \label{thm:smoothD}
 Let $n\ge 14$ be an integer and let $\mathcal D_n\subset {\mathcal M^{\rm KSBA}_{2n-1,n+2}}$ be the locus of 2-Gorenstein  T--singular stable surfaces. Then
\begin{enumerate}
\item $\ol{\mathcal D}_n$ is a generically Cartier reduced  divisor contained  in $\ol{\mathcal M_{2n-1,n+2}}$.
\item $\ol{\mathcal D}_n$ is irreducible  if $n\equiv 2,3$ modulo $4$ and it has two irreducible components otherwise.   When $\ol{\mathcal M_{2n-1,n+2}}$ has two irreducible components (i.e., when $n\equiv 0$ modulo $4$), then $\ol{\mathcal D}_n$ intersects each of them  in an irreducible divisor.
\item If $X$ is general in a component of $\ol{\mathcal D}_n$, then the only non canonical singularity of $X$ is of type $\frac 14(1,1)$.
\end{enumerate}
\end{thm}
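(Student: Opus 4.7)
The plan is to derive Theorem \ref{thm:smoothD} as a direct corollary of Theorem \ref{thm:smooth}, combined with the structural description of $\mathcal D_n$ provided by Theorem \ref{thm:Dn} and Table \ref{tab:Dn}, the Horikawa component count of Theorem \ref{thm:Ho23b}, and the explicit smoothings of Proposition \ref{prop:explicit}. Essentially nothing new has to be proved; the work is to verify that all the pieces fit.

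Parts (1) and (3) follow with little further effort. For (3), apply Theorem \ref{thm:Dn}(ii) to a top-dimensional stratum $\mathcal D_{n,d}$ in each irreducible component of $\ol{\mathcal D_n}$: since $\mathcal D_{n,d}^0$ is dense in $\mathcal D_{n,d}$, the general surface in each component has $\frac 14(1,1)$ as its unique non-canonical singularity (additional $A_1$ points may occur, as recorded in Remark \ref{rem:Dn-dim}, but these are canonical). For (1), Theorem \ref{thm:smooth}(i) places the general $[X]$ of any component of $\mathcal D_n$ in $\ol{\mathcal M_{2n-1,n+2}}$ and asserts that $\mathcal M_{2n-1,n+2}^{\rm KSBA}$ is smooth at $[X]$, while Theorem \ref{thm:smooth}(ii) asserts that $\ol{\mathcal D_n}$ is a smooth divisor at the same point; a smooth codimension-one subscheme of a smooth scheme is Cartier and reduced at that point, which gives the generically Cartier reduced divisor statement.

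For (2), the argument is a case analysis on $n \bmod 4$. By Remark \ref{rem:top-dimension-strata} and Theorem \ref{thm:Dn}(iii), the strata of $\mathcal D_n$ of maximal dimension $7n+18$ are: $\mathcal D_{n,0}$ when $n$ is odd, $\mathcal D_{n,1}'$ when $n$ is even, and in addition one further stratum, either $\mathcal D_{4k+1,2k+2}'=\mathcal D_{4k+1,2k+2}$ if $n=4k+1$ or $\mathcal D_{4k,2k+1}''$ if $n=4k$. Hence for $n\equiv 2,3\pmod 4$ there is a unique top-dimensional stratum and $\ol{\mathcal D_n}$ is irreducible, while for $n\equiv 0,1\pmod 4$ it has exactly two irreducible components. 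When moreover $n=4k$, Theorem \ref{thm:Ho23b} splits $\ol{\mathcal M_{2n-1,n+2}}$ into two components, the first containing the Horikawa strata of small odd type, the second the extremal one; Proposition \ref{prop:explicit}(i) applied to $d=1$ places $\ol{\mathcal D_{n,1}'}$ in the closure of $\mathcal M_{n,1}^{{\rm Hor,2}}$ and hence in the first component, and Proposition \ref{prop:explicit}(ii) applied to the extremal $d=2k+1$ (for which $n=2d-2$) places $\ol{\mathcal D_{n,2k+1}''}$ in the closure of $\mathcal M_{n,2k+1}^{{\rm Hor,2}}$ and hence in the second. Each component of $\ol{\mathcal M_{2n-1,n+2}}$ therefore meets $\ol{\mathcal D_n}$ in exactly one irreducible divisor.

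The main obstacle is nothing more than the combinatorial bookkeeping that matches the top-dimensional strata $\mathcal D_{n,d}'$ and $\mathcal D_{n,d}''$ to the Horikawa components across the residues modulo $4$ and across whether $d$ is small or extremal; one must in particular check that for $n=4k$ the extremal value $d=2k+1$ lies in the second component and that Proposition \ref{prop:explicit}(ii), not (i), is the relevant statement there. All the substantive content, namely the smoothness of the KSBA moduli at a general point of $\mathcal D_n$ and the existence of the $\Q$-Gorenstein smoothings used in Proposition \ref{prop:explicit}, is already packaged into the theorems invoked above.
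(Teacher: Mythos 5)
Your proposal is correct and follows essentially the same route as the paper: part (1) from Theorem \ref{thm:smooth}, part (3) from Theorem \ref{thm:Dn}, and part (2) by identifying the top-dimensional strata via Remark \ref{rem:top-dimension-strata} and matching them to the Horikawa components via Proposition \ref{prop:explicit}. Your version merely spells out a few of the bookkeeping steps (e.g.\ that $n=2d-2$ for the extremal $d=2k+1$ when $n=4k$) that the paper leaves implicit.
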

\begin{proof}
Recall that for $n\ge 7 $ we have  $\mathcal M_n^{\rm Hor, 2}=\mathcal M_{2n-1,n+2}$ (cf. Theorem \ref{thm:Ho23b}) and that for $n\ge 14$ the locus $\mathcal D_n$ coincides with $\mathcal D_n$.

 (i) Follows directly from Theorem \ref{thm:smooth} since $\mathcal D_n=\sqcup_d \mathcal D_{n,d}$, where $d-n$ is odd and $n\ge \{d+3, 2d-3\}$.

(ii)  The  irreducible components of $\ol{\mathcal D_n}$ correspond to the values of $d$ such that $\ol{\mathcal D_{n,d}}$   has a component of dimension $7n+18$, and have been determined in Remark \ref{rem:top-dimension-strata}. By Proposition \ref{prop:explicit} if $n=4k$, then $\ol{\mathcal M_n^{\rm Hor, 2a}}$ contains $\mathcal D_{n,1}$ and $\ol{\mathcal M_n^{\rm Hor, 2b}}$
contains $\mathcal D_{n,2k+1}$.

(iii) Follows by Theorem \ref{thm:Dn}.
\end{proof}

The proof of Theorem \ref{thm:smooth} is based on the following crucial technical result:
\begin{thm} \label{thm:TX}
In the assumptions of Theorem \ref{thm:smooth}, consider the natural action  of the canonical involution $\iota$ on the tangent sheaf $T_X=\Hom(\Omega^1_X,\OO_X)$. Let $\nu$ be the number of $A_1$-points of $X$. Then:
\begin{enumerate}
\item   $H^1(T_X)$ is $\iota$-invariant and has dimension
  $h^1(T_X)=7n+18-\nu$.
  \item $H^2(T_X)$ is anti-invariant and   has dimension
  $h^2(T_X)=n-3$.
\end{enumerate}
\end{thm}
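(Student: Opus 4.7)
The plan is to compute $H^i(T_X)$ by combining the minimal desingularization of $X$ with the double cover structure coming from the canonical involution, computing the $\iota$-invariant and $\iota$-anti-invariant parts separately on the quotient surface.

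First, Proposition \ref{prop:Tlog} identifies $H^i(T_X)\cong H^i(T_S(-\log E))$, where $f\colon S\to X$ is the minimal desingularization and $E=C+A_1+\cdots+A_\nu$ is the reduced exceptional divisor, with $C$ the $(-4)$-curve over the $\frac14(1,1)$-point and $A_k$ the $(-2)$-curves over the $A_1$-points. The involution $\iota$ lifts to $S$, preserves $E$, and thus acts on each $H^i(T_S(-\log E))$, decomposing it into an $\iota$-invariant and an $\iota$-anti-invariant eigenspace.

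Next I would put into play the double cover structure on $S$. By \S \ref{sec:Hor-sing} and Remark \ref{rem:liup}, there is a flat double cover $\pi\colon S\to Y$ of smooth surfaces with $\pi_*\Oc_S=\Oc_Y\oplus \cLal^{-1}$, where $Y$ is an iterated blow-up of $\F_d$ containing the strict transform $R'$ of a ruling and a smooth branch divisor $B_Y$ with $R'\not\subset B_Y$. Here $C$ is the $2$-to-$1$ cover of $R'$ (ramified at the two intersection points with $B_Y$), while each $A_k$ is a component of the ramification divisor $R_\pi$ of $\pi$ mapping isomorphically to a $(-4)$-component of $B_Y$. From $\Omega^1_S(\log R_\pi)\cong \pi^*\Omega^1_Y(\log B_Y)$ one obtains after dualizing and pushing forward
\[
\pi_*T_S(-\log R_\pi)\;=\;T_Y(-\log B_Y)\,\oplus\,\bigl(T_Y(-\log B_Y)\otimes \cLal^{-1}\bigr),
\]
the first summand $\iota$-invariant and the second $\iota$-anti-invariant.

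The bridge from $R_\pi$ to $E$ is done by iterating the standard short exact sequences $0\to T_S(-\log D')\to T_S(-\log D)\to \bigoplus \Oc_{D\setminus D'}(-D')\to 0$, adding the log pole along $C$ (since $C\subset E\setminus R_\pi$) and removing the components of $R_\pi\setminus E$. Combined with the previous step this yields a direct sum decomposition
\[
\pi_*T_S(-\log E)\;=\;\Fc^+\oplus \Fc^-,
\]
in which $\Fc^+$ is an explicit $\iota$-invariant sheaf on $Y$ built from $T_Y(-\log(R'+B'))$ (for a suitable $B'\subset B_Y$) together with twists by the exceptional divisors of $Y\to\F_d$, and $\Fc^-$ is the anti-invariant twist of a similar sheaf by $\cLal^{-1}$. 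The two crucial cohomological vanishings are
\[
H^2(\Fc^+)=0 \qquad\text{and}\qquad H^1(\Fc^-)=0,
\]
so that $H^1(T_X)=H^1(\Fc^+)$ is entirely $\iota$-invariant and $H^2(T_X)=H^2(\Fc^-)$ is entirely $\iota$-anti-invariant. The vanishing of $H^2(\Fc^+)$ reduces, via iterated blow-up exact sequences, to cohomology of line bundles on $\F_d$ (or follows from a logarithmic Akizuki--Nakano vanishing on the rational surface $Y$ with boundary $R'+B'$); the vanishing of $H^1(\Fc^-)$ is a Kodaira-type vanishing, exploiting the fact that $\cLal$ is essentially half of the branch class and so grows linearly with $n$, dominating $K_Y$ in the range $n\ge 14$. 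Riemann--Roch on $Y$ then gives $h^1(\Fc^+)=7n+18-\nu$ and $h^2(\Fc^-)=n-3$; a reassuring cross-check is the Euler characteristic identity
\[
\chi(T_S(-\log E))=\chi(T_S)+3+\nu=-6n-21+\nu,
\]
obtained from the normal crossings sequence $0\to T_S(-\log E)\to T_S\to \bigoplus_j \Oc_{E_j}(E_j)\to 0$ together with the Hirzebruch--Riemann--Roch value $\chi(T_S)=2\chi(\Oc_S)+K_S^2-c_2(S)=-6n-24$.

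The main obstacle is the precise identification of the summands $\Fc^\pm$ in the bridging step, because of the mismatch between $E$ and $R_\pi$: $C$ is not in $R_\pi$ but must be added as a log pole, whereas the components of $R_\pi$ not contracted by $f$ must be removed, and one has to track the action of $\iota$ on each piece (two isolated fixed points on $C$, but each $A_k$ fixed pointwise). Once this bookkeeping is carried out and the two vanishing theorems above are established, the statement of Theorem \ref{thm:TX} follows from Riemann--Roch.
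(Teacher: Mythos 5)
Your overall strategy is the same as the paper's: reduce to $H^i(T_S(-\log E))$ via Proposition \ref{prop:Tlog}, push the log tangent sheaf down to the quotient $Y$ of $S$ by the canonical involution, split into eigensheaves, kill $H^2$ of the invariant part and $H^1$ of the anti-invariant part, and finish with Riemann--Roch (your Euler-characteristic cross-check $\chi(T_S(-\log E))=-6n-21+\nu$ is correct and consistent with the claimed answer). However, the two steps you defer are exactly where the content lies, and as sketched they would not go through. The correct decomposition (equation \eqref{eq:decTlog} in the paper, a generalization of \cite[Prop.~4.1, a)]{rita-abel}) is
\begin{equation*}
f_*T_S(-\log(C+G))=T_Y\bigl(-\log(B+\Delta)\bigr)\oplus T_Y(-\log\Delta)(-L),\qquad \Delta=R'+A_1+\dots+A_\nu,
\end{equation*}
where $Y$ is $\F_d$ blown up at $P_1,P_2$ \emph{and} at the nodes $Q_i$ of $\bar B$, and $B$ is the smooth strict transform of $\bar B$. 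Two features of your picture conflict with this. First, the $(-2)$-curves over the $A_1$-points of $X$ are \emph{not} components of the ramification divisor mapping isomorphically to $(-4)$-components of the branch: they are double covers of the exceptional $(-1)$-curves $A_i$ over the nodes, branched at the two points $A_i\cap B$, and the $A_i$ must therefore appear as log poles in $\Delta$, not as branch components. Second, and crucially, the anti-invariant summand carries log poles only along $\Delta$ and \emph{not} along the branch divisor: stripping the log poles along the ramification from $T_S(-\log R_\pi)$ removes $-\log B$ from the anti-invariant eigensheaf only, while the invariant one retains it. Your $\Fc^-$, described as $\cLal^{-1}$ times ``a similar sheaf'' to $\Fc^+$ (i.e., still with log poles along part of $B$), is the wrong sheaf and has different cohomology.

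More seriously, the vanishing $H^1(\Fc^-)=0$ is not a Kodaira- or Akizuki--Nakano-type statement. Serre-dually one must show $H^1(\Omega^1_Y(\log\Delta)(K_Y+L))=0$, and $K_Y+L=\epsi^*(D+\tfrac12(n+d-1)F)$ is only nef, being trivial on all $\nu+2$ exceptional curves; no amount of positivity of $\cLal$ (which is irrelevant after adding $K_Y$) rescues this, and indeed $H^1(\Omega^1_Y(K_Y+L))$ \emph{without} the log poles is nonzero of dimension $\nu+2$. The vanishing holds only because the coboundary map of the residue sequence for $\Delta$,
\begin{equation*}
\bigoplus_{i=1}^{\nu}H^0(\OO_{A_i})\oplus H^0(\OO_{R'}(1))\longrightarrow H^1(\Omega^1_Y(K_Y+L)),
\end{equation*}
is an isomorphism of $(\nu+2)$-dimensional spaces; proving surjectivity requires an explicit computation with the classes $[A_i],[E_1],[E_2]$ and the relation $[R']=[\epsi^*F]-[E_1]-[E_2]$, together with multiplication by suitably chosen sections of $\epsi^*H$ vanishing on one exceptional curve but not another (Lemmas \ref{lem:Ei} and \ref{lem:T-} of the paper). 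Until the eigensheaves are correctly identified and this coboundary argument is supplied, the dimensions and the assertion that $H^1(T_X)$ is purely invariant and $H^2(T_X)$ purely anti-invariant remain unproved.
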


We prove now Theorem \ref{thm:smooth}, granting Theorem \ref{thm:TX}, whose proof occupies the next section.

\begin{proof}[Proof of Theorem \ref{thm:smooth}]

If $V$ is a vector space on which $\iota$ acts we denote by $V^+$ the invariant subspace and by $V^-$ the anti-invariant one.

The involution $\iota$ acts naturally on the local-to-global sequence
\ref{eq: local-to-global} associated to the functor of $\Q$--Gorenstein  deformations $\Def_{QG}(X)$. This action,  taking into account Theorem \ref{thm:TX}, gives two exact sequences:
\begin{equation} \label{eq:+}
0\to H^1(T_X)^+\to(\T_{QG}^1)^+\to H^0(\Tc_{QG}^1)^+\to H^2(T_X)^+(= 0)\to  (\T_{QG}^2)^+ \to 0,
\end{equation}
and
\begin{equation} \label{eq:-}
0\to H^1(T_X)^-(=0)\to (\T_{QG}^1)^-\to H^0(\Tc_{QG}^1)^-\to H^2(T_X)^-\to (\T^2_{QG})\to 0,
\end{equation}
Sequence \eqref{eq:+} shows  that $(\T^2_{QG})^+=0$, so the functor $\Def_{QG}(X, \iota)$ of $\Q$--Gorenstein  deformations preserving the action of $\iota$ is smooth. 
 Next we look at $\Tc^1_{QG}$: as explained in Section \ref{ssec: Tdef}, it is a skyscraper sheaf concentrated at the singular points of $X$ and has length 1 at each point, because $X$ is general in $\mathcal D_{n,d}$ by assumption. Consider the double cover  $\pi\colon X\to \bar Y:=X/\iota$ (cf. diagram \eqref{diag:explicit}). The morphism $\pi$ represents the $\frac 14(1,1)$--singularity as a double cover of an $A_1$--singularity $w^2-uv=0$ branched on the intersection with $w=0$: since an $S_2$ double cover with Cartier branch locus is determined \'etale locally by the branch divisor (\cite[Lem.~1.2, Prop.~1.6]{non-normal}) the action of $\iota$ is \'etale  locally isomorphic to that of the involution $\bar \tau$ described in Remark \ref{rem:sing-inv}. Similarly, the $A_1$ singularities of $X$ map to smooth points of $\bar Y$ that are ordinary double points of the branch locus, so in local coordinates $X$ is given by $z^2-xy=0$, $\pi$ is given by $(x,y,z)\mapsto (x,y)$ and $\iota$ acts by $(x,y,z)\mapsto(x,y,-z)$. So in both cases  the local  situation is the one considered in Remark \ref{rem:sing-inv} and 
   $H^0(\Tc_{QG}^1)$ is $\iota$-invariant. Therefore $(\T_{QG}^1)^-=0$ by sequence \eqref{eq:-}. So  $0=(\T^2_{QG})^+\to \T^2_{QG}$ is injective,  $(\T^1_{QG})^+\to (\T_{QG}^1)$ is an isomorphism, and therefore the map $\Def_{QG}(X,\iota)\to \Def_{QG}(X)$ is smooth (and thus  \'etale) by a well known criterion (cf. \cite[Prop.~2.3.6]{sernesi}) and $\Def_{QG}(X)$ is also smooth.  
 
The map   $\T_{QG}^1\to H^0(\Tc^1_{QG})$ is surjective, so $X$ has a $\Q$--Gorenstein smoothing,  the tangent space to $\mathcal D_n$ at $[X]$ is the kernel of the nonzero map $\T^1_{QG}\to \Tc^1_{QG,x}$, where $x\in X$ is the only non-canonical singularity.  So, by (i) of Theorem \ref{thm:TX}, this kernel has dimension $7n+18$   and $\mathcal D_n$ is a smooth Cartier divisor at the point $[X]$. 
\end{proof}

\begin{rem} The assumption ``$X$ general in $\mathcal D_{n,d}$'' in the statement of Theorem \ref{thm:TX} (hence in all the results of  section \ref{ssec:results}) can be made explicit, as follows. The divisor $\bar B\in |6D+(n+3+3d)F|$ corresponding to $X$ must satisfy the following conditions:
\begin{itemize}
\item the intersection of $\bar B$ with a fixed ruling $R$ of $\F_d$ consists of four distinct points: two of these, called $P_1$, $P_2$ in the proof are ordinary double points of $\bar B$, while the remaining two are smooth points of $\bar B$; 
\item if $n> 3d-3$ or $n=3d-3$ and  $P_1,P_2\notin D$,  or $n=2d-3$, then the points $P_1$, $P_2$ are the only singularities of $B$;
\item if $2d-3<n<3d-3$ or $n=3d-3$ and  $P_1$ or $P_2$ lies on $D$, then $\bar B=D+\bar B_0$ where $\bar B_0$ is irreducible and the only singularities of $B$ besides $P_1,P_2$ are ordinary double points occurring  at the intersection of $D$ and $\bar B_0$.
\end{itemize}
\end{rem}

\subsection{Proof of Theorem \ref{thm:TX}}

We keep the notation of the previous sections.

As we have seen in Section \ref{sec:Hor-sing} the surface $X$, being general, is obtained as follows. 
We let $R$ be a ruling of $\F_d$,   $P_1, P_2\in R$  distinct points with $P_2\notin D$  and we let $\bar B\in|6D+(n+3+3d)F)|$ be a divisor general among those with a double point at $P_1$ and $P_2$ (i.e., in the notation of Appendix \ref{sec:appendix1} the divisor $\bar B$ corresponds to a general element of $|L_{i0}|$ for $i=0$ or $1$).  We denote by $Q_1,\dots, Q_{\nu}\in \F_d$  the singularities of $\bar B$ distinct from $P_1, P_2$, if any.  Since $X$ is general, $R'$ intersects $B$ at two distinct points and the $Q_i$ are ordinary double points that  belong to $D$. 
We take the blow-up $\epsi\colon Y\to \F_d$ at  $P_1, P_2$ and $Q_1, \dots, Q_{\nu}$ and let $f\colon S\to Y$ be the double cover branched on the strict transform $B$ of $\bar B$. The surface $S$ is a smooth Horikawa surface of the first kind.
 We denote by $E_1, E_2$ the exceptional curves above $P_1$ and $P_2$   and by $A_1,\dots, A_{\nu}$ the exceptional curves  over $Q_1,\dots, Q_{\nu}$.  The preimages $G_1,\dots, G_{\nu}$ of $A_1, \dots, A_{\nu}$ are $(-2)$--curves,  the preimage $C$ of the strict transform $R'$ of $R$ is a $(-4)$--curve $C$ and  $X$ is the surface obtained from $S$ by contracting $C$, $G_1,\dots, G_{\nu}$.

By Proposition \ref{prop:Tlog}, there are isomorphisms $H^i(T_X)\cong H^i(T_S(-\log(C+G))$, $i=0,1,2$, where $G:=G_1+\dots+G_{\nu}$.  We are going to compute the latter groups by exploiting the double cover $f\colon S\to Y$, whose associated involution is the canonical involution $\iota$.

Set $\Delta:= R'+ A_1+\dots+A_{\nu}$; the divisors $\Delta$,  $\Delta+B$ and $f\inv(\Delta)=G+C$  are simple normal crossings. 
The following decomposition, where the first summand is $\iota$-invariant and the second one is $\iota$-antiinvariant, is a straightforward  generalization of  \cite[Prop.~4.1, a)]{rita-abel}:
\begin{gather}\label{eq:decTlog}
f_*T_S(-\log(C+G))=T_Y(-\log(B+\Delta))\oplus T_Y(-\log\Delta)(- L), 
\end{gather}
where $L=\frac12 B=\epsi^*(3D+\frac12(n+3+3d)F)-(E_1+E_2+A_1+\dots +A_{\nu})$.
To prove the theorem we compute the cohomology of the sheaves in the right hand side of \eqref{eq:decTlog}.
We will repeatedly use the following simple observation: 
\begin{lem}\label{lem:simple}
Let $\eta\colon Z\to W$ be a birational morphism of smooth surfaces and let $\mathcal F$ be a locally free sheaf on $W$. Then $\eta^*\colon H^i(\mathcal F)\to H^i(\eta^*\mathcal F)$ is an isomorphism for every $i$.
\end{lem}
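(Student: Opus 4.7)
The plan is to reduce the statement to the well-known fact that a birational morphism of smooth surfaces has trivial higher direct images of the structure sheaf, and then feed this into the projection formula together with the Leray spectral sequence.

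First I would invoke the projection formula, which for $\eta$ proper and $\mathcal F$ locally free gives
\[
R^j\eta_*(\eta^*\mathcal F)\;\cong\;\mathcal F\otimes R^j\eta_*\mathcal O_Z
\]
for every $j\ge 0$. Next, since $\eta\colon Z\to W$ is birational between smooth surfaces, by the structure theorem it factors as a finite sequence of blow-ups at smooth points; in particular $W$ (being smooth and hence having rational singularities, trivially) satisfies $\eta_*\mathcal O_Z=\mathcal O_W$ and $R^j\eta_*\mathcal O_Z=0$ for $j>0$. Substituting, we obtain $\eta_*(\eta^*\mathcal F)=\mathcal F$ and $R^j\eta_*(\eta^*\mathcal F)=0$ for $j>0$.

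The final step is to apply the Leray spectral sequence
\[
E_2^{p,q}=H^p(W,R^q\eta_*(\eta^*\mathcal F))\;\Longrightarrow\;H^{p+q}(Z,\eta^*\mathcal F).
\]
The vanishing of $R^q\eta_*(\eta^*\mathcal F)$ for $q>0$ makes the spectral sequence degenerate at $E_2$, yielding canonical isomorphisms $H^i(W,\mathcal F)\cong H^i(Z,\eta^*\mathcal F)$ for all $i$, and one checks that the isomorphism is induced by the pullback map $\eta^*$ via the adjunction unit $\mathcal F\to \eta_*\eta^*\mathcal F$ (which is an isomorphism by the above).

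There is no real obstacle here — the only thing to be careful about is the identification of the spectral sequence edge map with $\eta^*$, and the appeal to $R^j\eta_*\mathcal O_Z=0$ for $j>0$, which follows either from the factorization into point blow-ups and a direct computation, or from the fact that smooth varieties have rational singularities.
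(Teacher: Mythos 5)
Your argument is correct and follows exactly the same route as the paper: factor $\eta$ into point blow-ups to get $\eta_*\OO_Z=\OO_W$ and $R^j\eta_*\OO_Z=0$ for $j>0$, apply the projection formula to conclude $\eta_*(\eta^*\mathcal F)=\mathcal F$ and $R^j\eta_*(\eta^*\mathcal F)=0$ for $j>0$, and let the Leray spectral sequence degenerate at $E_2$. Your extra remark about identifying the edge map with $\eta^*$ via the adjunction unit is a small point of care the paper leaves implicit.
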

\begin{proof} 
The morphism $\eta$ is a sequence of blow-ups, hence $\eta_*\OO_Z=\OO_W$, $R^i\eta_*\OO_Z=0$ for $i>0$ and the projection formula for locally free sheaves gives $\eta_*(\eta^*\mathcal F)=\mathcal F$, $R^i\eta_*(\eta^*\mathcal F)=0$ for $i>0$. So the Leray spectral sequence for $\eta^*\mathcal F$ degenerates at $E_2$ and the claim follows. 
\end{proof}

We start with a (probably well known) general lemma that describes the cotangent sheaf of a blown up surface.
\begin{lem}\label{lem:Ei}
 Let $W$ be a smooth surface, let  $\epsilon\colon \widehat W\to W$ be the blow-up at distinct points $p_1,\dots, p_k$, and let $E_1,\dots, E_k$ be the corresponding exceptional curves. Then:
\begin{enumerate}
\item there is an exact sequence $0\to \epsilon^*\Omega^1_W\to\Omega^1_{\widehat W}\to \oplus_1^k\OO_{E_i}(-2)\to 0$ \\and 
 the induced sequence \\ $0\to H^1(\epsilon^*\Omega^1_W)\to H^1(\Omega^1_{\widehat W})\stackrel{\rho}{\to} \oplus_1^kH^1(\OO_{E_i}(-2))\to 0$ is exact;
\item for every $i=1,\dots, k$ there is a nonzero $\gamma_i\in H^1(\OO_{E_i}(-2))$ such that the class $[E_i]\in H^1(\Omega^1_{\widehat W})$ maps to $(0,\dots, \gamma_i,\dots 0)\in \oplus_1^kH^1(\OO_{E_i}(-2))$.

\end{enumerate}
\end{lem}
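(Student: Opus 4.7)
The strategy is to deduce both statements from the relative cotangent sequence for $\epsilon$ together with the standard behavior of the first Chern class of a divisor under restriction.

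For the sheaf sequence in (i), I would start from the relative cotangent sequence
$$\epsilon^*\Omega^1_W \to \Omega^1_{\widehat W} \to \Omega^1_{\widehat W/W} \to 0.$$
The first map is injective: both sides are locally free of rank $2$ on the smooth surface $\widehat W$ and the map is an isomorphism away from $E$, so any kernel would be a torsion-free subsheaf of $\epsilon^*\Omega^1_W$ vanishing on a dense open set, hence zero. Since the centers $p_1,\dots,p_k$ are distinct, the computation of $\Omega^1_{\widehat W/W}$ is local at each $p_i$, and working in affine coordinates on the two standard charts of the blow-up of a point on a smooth surface (or invoking the Euler sequence on $E_i\cong\mathbb{P}^1$) one identifies $\Omega^1_{\widehat W/W}$ with $\oplus_i \OO_{E_i}(-2)$; the computation of transition functions between the two charts is what produces the twist $-2$.

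For (ii), I would use functoriality of the Chern class. After identifying the direct summand $\OO_{E_j}(-2)$ of $\Omega^1_{\widehat W/W}$ with $\Omega^1_{E_j}$, the $j$-th component of the cohomology map $\rho$ fits in a commutative square with the restriction $\Pic(\widehat W)\to \Pic(E_j)$ and the first Chern class maps on $\widehat W$ and $E_j$. Consequently $[E_i]=c_1(\OO(E_i))$ is sent to $c_1(\OO(E_i)|_{E_j})\in H^1(\Omega^1_{E_j})$. Since $E_i\cdot E_j=-\delta_{ij}$ and the degree map $\Pic(\mathbb{P}^1)\to H^1(\Omega^1_{\mathbb{P}^1})\cong \mathbb{C}$ is an isomorphism, this Chern class is nonzero exactly when $i=j$, which produces the required $\gamma_i$.

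Finally, the cohomology short exact sequence in (i) follows from the long exact sequence of the short exact sequence of sheaves: injectivity on the left is immediate since $H^0(\OO_{E_i}(-2))=0$, and for surjectivity on the right I would observe that (ii) already exhibits classes $[E_1],\dots,[E_k]\in H^1(\Omega^1_{\widehat W})$ whose images form a basis of $\oplus_i H^1(\OO_{E_i}(-2))$, so $\rho$ is surjective (equivalently, the coboundary into $H^2(\epsilon^*\Omega^1_W)$ vanishes). The only mildly delicate step is the local identification $\Omega^1_{\widehat W/W}\cong \oplus \OO_{E_i}(-2)$; everything else reduces to standard Chern class functoriality and cohomology of line bundles on $\mathbb{P}^1$.
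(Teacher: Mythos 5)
Your proof is correct, but the way you obtain part (ii) and the surjectivity of $\rho$ is genuinely different from the paper's. At the level of sheaves the two arguments coincide in substance: you build the sequence from the relative cotangent sequence and identify the cokernel $\Omega^1_{\widehat W/W}\cong\oplus_i\OO_{E_i}(-2)$ by a chart computation, while the paper starts from the restriction-of-forms surjection $\Omega^1_{\widehat W}\to\oplus_i\omega_{E_i}$ and checks locally that its kernel is $\epsilon^*\Omega^1_W$; this is the same verification read in two directions. The divergence is in the cohomological step. The paper proves surjectivity of $\rho$ first: by Lemma \ref{lem:simple} the kernel $H^1(\epsilon^*\Omega^1_W)=\epsilon^*H^1(\Omega^1_W)$ is orthogonal, for the intersection form, to the $k$-dimensional span $V$ of the classes $[E_i]$, so $\rho|_V$ is injective and hence an isomorphism by a dimension count; part (ii) is then deduced from the case $k=1$ together with the remark that the points may be blown up in any order. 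You instead prove (ii) first: the $j$-th component of $\rho$ is pullback of forms to $E_j$, hence commutes with $c_1$, so $\rho_j([E_i])=c_1(\OO(E_i)|_{E_j})$ has degree $E_i\cdot E_j=-\delta_{ij}$, and surjectivity of $\rho$ follows because the images of the $[E_i]$ already form a basis of $\oplus_iH^1(\OO_{E_i}(-2))$. Your route yields a completely explicit formula for $\rho([E_i])$ (in particular the vanishing of the off-diagonal components, which the paper only sketches), and it does not need the identification of the kernel with the orthogonal complement of $V$; the paper's route gets surjectivity without invoking Chern-class functoriality. Both arguments are sound.
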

\begin{proof} (i) 
Consider the natural   surjective map $r\colon \Omega^1_{\widehat W}\to \oplus_1^k\omega_{E_i}=\oplus_1^k\OO_{E_i}(-2)$ given by restricting 1-forms. The sheaf  $\epsilon^*\Omega^1_W$ is contained in the kernel of $r$ since the $E_i$ are contracted by $\epsi$,  and an easy computation in local analytic or \'etale coordinates shows that it actually coincides with $\ker r$. So we have the  exact sequence of sheaves in the statement.

 Taking cohomology we get an exact sequence: 
$$0\to H^1(\epsilon^*\Omega^1_W)\to H^1(\Omega^1_{\widehat W})\stackrel{\rho}{\to} \oplus_1^kH^1(\OO_{E_i}(-2)).$$
By Lemma \ref{lem:simple},  $H^1(\epsilon^*\Omega^1_W)=\epsilon^*H^1(\Omega^1_W)$ is orthogonal for the intersection form on $H^2(\widehat W,\C)$  to the subspace $V$ spanned by the classes $[E_1],\dots, [E_k]$. Since $V$ has dimension $k$, $\rho$ is surjective by dimension reasons and induces an isomorphism $V\to \oplus_1^kH^1(\OO_{E_i}(-2))$. \medskip

(ii)
For  $k=1$, the claim follows immediately from (i). Then the  general case follows easily since the points $p_1,\dots, p_k$ can be blown up in any order. 
\end{proof}

Next we prove some auxiliary results. 
We define  $H:=K_{\F_d}+3D+\frac 12(n+3+3d)F=D+ \frac 12(n+d-1)F$,  so that    $K_Y+L=\epsi^*(H)$. 

\begin{lem}\label{lem:Omega1H}
 $h^0(\epsi^*\Omega^1_{\F_d}( K_Y+L))=n-3$ and $h^i(\epsi^*\Omega^1_{\F_d}( K_Y+L))=0$ for   $i>0$.
\end{lem}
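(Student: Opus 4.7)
The plan is to push the computation down from $Y$ to $\F_d$ using Lemma \ref{lem:simple}, and then to $\pp^1$ using the structure morphism. Since $K_Y + L = \epsi^* H$ with $H = D + mF$ and $m := \frac12(n+d-1)$ (an integer, since $n-d$ is odd), we have
\[
\epsi^*\Omega^1_{\F_d}(K_Y+L) = \epsi^*\bigl(\Omega^1_{\F_d}(H)\bigr),
\]
so Lemma \ref{lem:simple} reduces the claim to computing $H^i(\F_d,\Omega^1_{\F_d}(H))$.

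Next I would use the relative cotangent sequence associated to the ruling $\pi\colon \F_d\to \pp^1$,
\[
0 \to \pi^*\Omega^1_{\pp^1} \to \Omega^1_{\F_d}\to \Omega^1_{\F_d/\pp^1}\to 0.
\]
Here $\pi^*\Omega^1_{\pp^1}=\OO(-2F)$ and, comparing $K_{\F_d} = \Omega^1_{\F_d/\pp^1}\otimes \pi^*\Omega^1_{\pp^1}$ with $K_{\F_d}=-2D-(d+2)F$, one gets $\Omega^1_{\F_d/\pp^1}=\OO(-2D-dF)$. Twisting by $H=D+mF$ yields the short exact sequence
\[
0\to \OO\bigl(-D+(m-d)F\bigr)\to \Omega^1_{\F_d}(H)\to \OO\bigl(D+(m-2)F\bigr)\to 0.
\]

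Now I would compute the cohomology of the two outer terms via Leray on $\pi$. Since $\OO(-D)$ restricts to $\OO(-1)$ on every fiber, both $\pi_*$ and $R^1\pi_*$ of $\OO(-D+kF)$ vanish, hence $H^i(\F_d,\OO(-D+(m-d)F))=0$ for all $i$. On the other hand, using $\pi_*\OO(D)=\OO\oplus \OO(-d)$ and $R^1\pi_*\OO(D)=0$ (the fiberwise restriction is $\OO(1)$), the projection formula gives $\pi_*\OO(D+kF)=\OO(k)\oplus\OO(k-d)$, with no higher direct image. Specializing to $k=m-2$, the two summands have degrees $m-2$ and $m-2-d$; the assumption $n\ge d+3$ (together with $n+d\ge 3$) ensures both are $\ge -1$, so $H^1$ vanishes and
\[
h^0\bigl(\F_d,\OO(D+(m-2)F)\bigr) = (m-1)+(m-1-d) = 2m-d-2 = n-3.
\]

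Plugging these into the long exact cohomology sequence of the short exact sequence above yields $h^0(\Omega^1_{\F_d}(H))=n-3$ and $h^i(\Omega^1_{\F_d}(H))=0$ for $i>0$, which by Lemma \ref{lem:simple} gives the stated values for $\epsi^*\Omega^1_{\F_d}(K_Y+L)$ on $Y$. I do not foresee a genuine obstacle here; the only point requiring care is the parity check that makes $m$ an integer and the inequality $n\ge d+3$ which guarantees positivity of the degrees on $\pp^1$.
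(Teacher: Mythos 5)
Your argument is correct and follows essentially the same route as the paper: reduce to $\F_d$ via Lemma \ref{lem:simple}, use the relative cotangent sequence of the ruling twisted by $H$, and compute via pushforward to $\pp^1$. (One minor slip: in your twisted sequence the sub and quotient are interchanged --- $\pi^*\Omega^1_{\pp^1}(H)=\OO(D+(m-2)F)$ is the subsheaf and $\Omega^1_{\F_d/\pp^1}(H)=\OO(-D+(m-d)F)$ is the quotient --- but since the latter has vanishing cohomology in all degrees, the long exact sequence gives the same conclusion either way.)
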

\begin{proof} 
Consider the relative differentials sequence on $\F_d$ and twist it  by $H$:
\begin{equation}\label{eq:rel-diff}
0\to \OO_{\F_d}(H-2F)\to \Omega^1_{\F_d}(H)\to \OO_{\F_d}(K_{\F_d}+H+2F)\to 0.
\end{equation}
The sheaf  $\OO_{\F_d}(K_{\F_d}+H+2F)$ has no cohomology because it has degree $-1$ on the rulings of $\F_d$, so  $h^i(\Omega^1_{\F_d}( H))=h^i(\OO_{\F_d}(H-2F))$ for all $i$.

To compute $h^i(\OO_{\F_d}(H-2F))$ we consider the projection map $p\colon \F_d\to \pp^1$ and observe that $R^ip_*\OO_{\F_d}(H-2F)=0$ for $i>0$. So 
$h^i(\OO_{\F_d}(H-2F))=h^i(p_*\OO_{\F_d}(H-2F))=h^i\left(\OO_{\pp^1}(\frac12(n+d-5)) \oplus \OO_{\pp^1}(\frac12(n-d-5)\right)$
 for all $i$. Since $\frac12(n-d-5)\ge -1$ in view of the assumption $n\ge d+3$, we get $h^i(\OO_{\F_d}(H-2F))=0$ for $i>0$ and $h^0(\OO_{\F_d}(H-2F))=n-3$.
To finish the proof recall that, by Lemma \ref{lem:simple}, $h^i(\epsi^*\Omega^1_{\F_d}(K_Y+L))=h^i(\Omega^1_{\F_d}(H))=0$ for  all $i$.
 \end{proof}

Next we deal with the anti-invariant summand $T_Y(-\log \Delta)(-L)$.
\begin{lem}\label{lem:T-}
 $h^i(T_Y(-\log \Delta) (-L))=0 $ for $i<2$, $h^2(T_Y(-\log \Delta)(-L))=n-3$.
 \end{lem}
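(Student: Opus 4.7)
The plan is to apply Serre duality to convert the problem into one about $\Omega^1_Y(\log\Delta)$, where the residue sequence and the preceding lemmas can be brought to bear. Since $T_Y(-\log\Delta)$ is the rank-$2$ dual of $\Omega^1_Y(\log\Delta)$ and $\det\Omega^1_Y(\log\Delta)=K_Y+\Delta$, Serre duality yields
\[
h^i(T_Y(-\log\Delta)(-L))=h^{2-i}(\Omega^1_Y(\log\Delta)(\epsi^*H)),
\]
using $K_Y+L=\epsi^*H$. It therefore suffices to show $h^0(\Omega^1_Y(\log\Delta)(\epsi^*H))=n-3$ and $h^1=h^2=0$.

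The components of $\Delta=R'+\sum_i A_i$ are pairwise disjoint smooth rational curves (for $X$ general, $R$ meets the singularities of $\bar B$ only at $P_1,P_2$, so $R'\cap A_i=\emptyset$), hence the residue sequence
\[
0\to \Omega^1_Y(\epsi^*H)\to \Omega^1_Y(\log\Delta)(\epsi^*H)\to \OO_{R'}(1)\oplus\bigoplus_i\OO_{A_i}\to 0
\]
is exact, with $\epsi^*H\cdot R'=H\cdot R=1$ and $\epsi^*H\cdot A_i=0$ by the projection formula. Tensoring the sequence in Lemma~\ref{lem:Ei}(i) by $\OO_Y(\epsi^*H)$ and applying Lemma~\ref{lem:simple} together with Lemma~\ref{lem:Omega1H} gives $h^0(\Omega^1_Y(\epsi^*H))=n-3$, $h^1=2+\nu$, and $h^2=0$. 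The long exact sequence of the residue sequence immediately yields $h^2(\Omega^1_Y(\log\Delta)(\epsi^*H))=0$, and reduces the remaining claims to the statement that the connecting homomorphism
\[
\delta\colon H^0(\OO_{R'}(1))\oplus\bigoplus_i H^0(\OO_{A_i})\longrightarrow H^1(\Omega^1_Y(\epsi^*H))
\]
is an isomorphism between $(2+\nu)$-dimensional vector spaces.

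Establishing that $\delta$ is an isomorphism is the main obstacle. The strategy is to identify the target with $\bigoplus_{j=1,2}H^1(\OO_{E_j}(-2))\oplus\bigoplus_i H^1(\OO_{A_i}(-2))$ via Lemma~\ref{lem:Ei}, which is an actual isomorphism here because $H^1(\epsi^*\Omega^1_{\F_d}(H))=0$. Under this identification, the two summands in the source should map into complementary subspaces of the target: one expects $\delta$ to send $1\in H^0(\OO_{A_i})$ to the class $[A_i]$, hence to the generator $\gamma_{A_i}$ by Lemma~\ref{lem:Ei}(ii); and to send $H^0(\OO_{R'}(1))\cong\C^2$ to a linear combination of $[E_1],[E_2]$ via the relation $[R']=[\epsi^*R]-[E_1]-[E_2]$ (with $[\epsi^*R]$ vanishing in the quotient), the coefficients being the values of a section at the two points $R'\cap E_j$. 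This yields an isomorphism onto $\C\gamma_{E_1}\oplus\C\gamma_{E_2}$, and together with the $A_i$-part it furnishes the desired isomorphism. The subtle point is justifying the compatibility of the twisted residue map $\delta$ with the classical Chern class connecting map; this is done by observing that tensoring the untwisted residue sequence by the line bundle $\OO_Y(\epsi^*H)$ preserves the underlying Ext class, so that $\delta(s)$ is computed by cup product with the divisor class $[\Delta_i]$ in a suitable sense, reducing the iso claim to the explicit intersection-theoretic description of the classes $[R']$ and $[A_i]$ on~$Y$.
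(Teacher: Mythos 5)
Your argument is correct and follows essentially the same route as the paper: Serre duality to pass to $\Omega^1_Y(\log\Delta)(K_Y+L)$ with $K_Y+L=\epsi^*H$, the computation of the cohomology of $\Omega^1_Y(\epsi^*H)$ via Lemmas \ref{lem:simple}, \ref{lem:Ei} and \ref{lem:Omega1H}, and the reduction to showing that the connecting map $\delta$ is an isomorphism onto $V_1=\bigoplus H^1(\OO_{E_j}(-2))\oplus\bigoplus H^1(\OO_{A_i}(-2))$. The ``subtle point'' you flag — compatibility of the twisted coboundary with the Chern-class coboundary — is exactly what the paper makes precise via the commutative diagram of multiplication by global sections of $K_Y+L$ (using that restriction $H^0(\epsi^*H)\to H^0(\OO_{R'}(1))$ is surjective), and your description of the resulting map on $H^0(\OO_{R'}(1))$ by evaluation at the two points $R'\cap E_j$ is the correct outcome of that computation.
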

 
\begin{proof}
By Serre duality, $H^i(T_Y(-\log \Delta)(- L))$ is the dual vector space of $H^{2-i}(\Omega^1_Y(\log\Delta)(K_Y+L))$, so we will compute the dimension of the latter group.

Since $K_Y+L=\epsi^*H$ is trivial on the curves contracted by $\epsi$, by Lemma \ref{lem:Ei} we have an exact sequence:
$$0\to \epsi^*\Omega^1_{\F_d}(K_Y+L)\to\Omega^1_Y(K_Y+L)\to \bigoplus_{i=1}^{\nu} \OO_{A_i}(-2)\oplus\OO_{E_1}(-2)\oplus\OO_{E_2}(-2) \to 0.
$$
Lemma \ref{lem:Omega1H}  gives $h^0(\Omega^1_Y(K_Y+L))=n-3$, $h^2(\Omega^1_Y(K_Y+L))=0$ and  an isomorphism  $\psi\colon H^1(\Omega^1_Y(K_Y+L))\to V_1$, where $V_1:=\bigoplus_i H^1(\OO_{A_i}(-2))\oplus H^1(\OO_{E_1}(-2))\oplus H^1(\OO_{E_2}(-2))$.
Consider  the residue sequence for $\Delta$ twisted by $K_Y+L$:
\begin{equation}\label{eq:twist}
0\to\Omega^1_Y(K_Y+L)\to \Omega_Y^1(\log \Delta)(K_Y+L)\to  \bigoplus_{i=1}^{\nu} \OO_{A_i}\oplus \OO_{R'}(1)\to 0.
\end{equation}
Set $V_0=\bigoplus_{i=1}^{\nu}H^0( \OO_{A_i})\oplus H^0( \OO_{R'}(1))$. 
We are going  to  complete the proof by showing  that  the coboundary map $\partial\colon V_0\to H^1(\Omega^1_Y(K_Y+L))$
 is an isomorphism.   
 Since the map $\psi$ is an isomorphism, it is equivalent to show instead that $\psi\circ \partial\colon  V_0\to V_1$ is an isomorphism. Finally, since $\psi\circ\partial $ is a linear map of vector spaces of the same dimension $\nu+2$, it suffices to show that $\psi\circ \partial$ is surjective.
 
Set  ${W_0}:=\bigoplus_{i=1}^{\nu}H^0( \OO_{A_i})\oplus H^0( \OO_{R'})$,  and denote by $\partial_0\colon {W_0}\to H^1(\Omega^1_Y)$  the coboundary map in the residue sequence for $\Delta$. Recall that for any component $\Gamma$ of $\Delta$ the map $\partial_0$ sends $1\in H^0(\OO_{\Gamma})$ to $[\Gamma]\in H^1(\Omega^1_Y)$.

 Given  $s\in H^0(K_Y+L)$ there is a commutative diagram 
\begin{equation}\label{diag:coboundary}
\xymatrixcolsep{5pc}
\xymatrix{
 {W_0} \ar[r]^{\partial_0 }  \ar[d]_{\cdot s}&  H^1(\Omega^1_Y)\ar[d]^{\cdot s}  \ar[r]^{\rho} & V_1 \ar[d]^{\cdot s} \\
V_0 \ar[r]^{\partial}& H^1(\Omega^1_Y(K_Y+L)) \ar[r] ^{\psi} & V_1
}
\end{equation}
where $\rho$ is the map in Lemma \ref{lem:Ei}, (i). 
The line bundle   $K_Y+L=\epsi^*(H)$ restricts to the trivial bundle on any $\epsi$-exceptional curve $\Gamma$, so the restriction to $\Gamma$ of  the multiplication by $s$ is either $0$ or an isomorphism  depending on whether $s$ vanishes on $\Gamma$ or not.
 Since $H$ is free, for $i=1,\dots, \nu$
we can  pick a section $s_i\in H^0(K+L)$ that does not vanish on $A_i$; denoting  $\alpha_i:=1\in H^0(\OO_{A_i})$, by the commutativity of diagram \eqref{diag:coboundary} we obtain $\psi(\partial (s_i\alpha_i))=s_i \rho(\partial_0(\alpha_i))= s_i\rho([A_i])$, which is a non-zero element of $H^1(\OO_{A_i}(-2))$ (cf. Lemma \ref{lem:Ei}, (ii)).  
 Since $R'$ is linearly equivalent to $\epsi^*F-E_1-E_2$, for any $s\in H^0(K_Y+L)$ one has $s[R']=-s[E_1]-s[E_2]$, since $s[\epsi^*F]\in H^1(\epsi^*(\Omega^1_Y(K_Y+L)=0$ by Lemma \ref{lem:Omega1H}.  Set $\phi:=1\in H^0(\OO_{R'})$ and choose $t_1\in H^0(K_Y+L)$ vanishing along $E_2$ and not along  $E_1$ and $t_2$ vanishing along  $E_1$ and not along $E_2$. So  $\psi(\partial(t_i\phi))=\psi(t_i\partial_0(\phi))=\psi(t_i[R'])=\psi(t_i[-E_i])=t_i\psi([-E_i])$ spans the subspace $H^1(\OO_{E_i}(-2))\subset V_1$. 
 Summing up,  $\partial$ is surjective, as claimed.
\end{proof}

We are now ready to compute the cohomology of the  invariant summand. 

\begin{lem}\label{lem:T+} 
$h^1(T_Y(-\log( \Delta+B))=7n+18-\nu$ and $h^i(T_Y(-\log( \Delta+B))=0$ for $i\ne 1$.
\end{lem}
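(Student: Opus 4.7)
The plan is to compute $\chi(T_Y(-\log(B+\Delta)))$ via Riemann--Roch, and then to establish the two vanishings $h^0=h^2=0$; the stated value $h^1=7n+18-\nu$ will follow.

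For the Euler characteristic, I will use the residue sequence
\begin{equation*}
0\to T_Y(-\log(B+\Delta))\to T_Y\to N_{R'/Y}\oplus\bigoplus_i N_{A_i/Y}\oplus N_{B/Y}\to 0,
\end{equation*}
together with the computations $\chi(T_Y)=2-2\nu$, $g(B)=5n+8-\nu$ and $\chi(\OO_B(B))=7n+21-3\nu$, all obtained from routine intersection numerics on the blow-up $Y$ of $\F_d$ (using $K_Y^2=6-\nu$, $c_2(Y)=\nu+6$, $B^2=12n+28-4\nu$ and $K_Y\cdot B=2\nu-2n-14$). These yield $\chi(T_Y(-\log(B+\Delta)))=-7n-18+\nu$, consistently with the decomposition $f_*T_S(-\log(C+G))=T_Y(-\log(B+\Delta))\oplus T_Y(-\log\Delta)(-L)$ combined with Lemma~\ref{lem:T-}.

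The vanishing $h^0=0$ is immediate, since $T_Y(-\log(B+\Delta))$ is the $\iota$-invariant direct summand of $f_*T_S(-\log(C+G))$ and any global section would yield a nonzero section of $T_S$, contradicting that the Horikawa surface $S$ is of general type. The crux is $h^2=0$, for which Serre duality reduces us to $H^0(\Omega^1_Y(\log(B+\Delta))\otimes K_Y)=0$. I will prove this via the residue sequence
\begin{equation*}
0\to\Omega^1_Y\otimes K_Y\to\Omega^1_Y(\log(B+\Delta))\otimes K_Y\to\OO_{R'}(K_Y|_{R'})\oplus\bigoplus_i\OO_{A_i}(K_Y|_{A_i})\oplus\OO_B(K_Y|_B)\to 0.
\end{equation*}
The leftmost term has $H^0=h^2(T_Y)^\vee=0$; adjunction gives $K_Y|_{R'}\cong\OO_{\pp^1}$ and $K_Y|_{A_i}\cong\OO_{\pp^1}(-1)$, and $\deg K_Y|_B=2\nu-2n-14<0$ since $n\ge 14$, whence $h^0(K_Y|_B)=0$. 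So any global section of $\Omega^1_Y(\log(B+\Delta))\otimes K_Y$ injects into $H^0(\OO_{R'})\cong\C$, and residue compatibility at the two transverse crossings of $R'\cap B$ forces the constant residue to vanish at two points, thus identically.

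The main obstacle is the last vanishing: both the degree bound $\deg K_Y|_B<0$ (which relies critically on $n\ge 14$) and the residue compatibility at $R'\cap B$ must be handled carefully. Granted these, $h^1=-\chi=7n+18-\nu$ is immediate from the Euler characteristic.
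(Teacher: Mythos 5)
Your Euler characteristic computation and your argument for $h^0=0$ (a section of the invariant summand of $f_*T_S(-\log(C+G))$ would give a nonzero vector field on the general type surface $S$) are both fine — the latter is even slicker than the paper's, which instead uses the component of $B+\Delta$ of positive genus. The gap is in the step $h^2=0$, and it is twofold. First, the claim that $\deg K_Y|_B<0$ implies $h^0(\OO_B(K_Y|_B))=0$ fails whenever $B$ is reducible, i.e.\ whenever $n<3d-3$ (including the case $n=2d-3$, which occurs for $n\ge 14$). There $B=D'+ B_0$ with the two pieces disjoint on $Y$, and while $\deg K_Y|_{B_0}<0$, the other component satisfies $\deg K_Y|_{D'}=-2-D'^2=d+\nu-1$ or $d+\nu-2$ (or $d-2$ when $n=2d-3$), which is strictly positive since $d\ge 6$ in these cases; so $h^0(\OO_{D'}(K_Y|_{D'}))>0$ and the quotient in your twisted residue sequence has a nontrivial $H^0$ beyond the $\OO_{R'}$ summand. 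Second, the mechanism you invoke to kill the constant residue along $R'$ does not exist: in the residue sequence for $\Omega^1_Y(\log(B+\Delta))\otimes K_Y$ there is no compatibility condition at a normal crossing relating the residues along the two branches (locally a log form is $f\,dx/x+g\,dy/y+\eta$ and $f(p)$, $g(p)$ are independent). The actual obstruction to lifting a residue datum is the coboundary map into $H^1(\Omega^1_Y\otimes K_Y)\cong H^1(T_Y)^\vee$, which is a nonzero space here, and you would have to prove injectivity of that coboundary on the relevant summands — a genuine computation that your proposal does not address.

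The paper avoids the twisted coboundary entirely by observing that $\dim|-K_Y|\ge 1$ (all blown-up points lie on $R\cup D$), so that $\Omega^1_Y(\log(B+\Delta))(K_Y)$ injects into $\Omega^1_Y(\log(B+\Delta))$; it then suffices to show $H^0(\Omega^1_Y(\log(B+\Delta)))=0$, which follows because the untwisted coboundary $H^0(\OO_B)\oplus H^0(\OO_\Delta)\to H^1(\Omega^1_Y)$ sends the generator of each component to its class in $\NS(Y)_\C$, and the classes of $R'$, the $A_i$ and the component(s) of $B$ are visibly linearly independent. If you want to salvage your route you would need either this reduction or an explicit injectivity argument for the twisted coboundary analogous to the one the paper carries out in Lemma \ref{lem:T-} for the anti-invariant summand.
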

\begin{proof}
We start by computing $h^2(T_Y(-\log( \Delta+B))=h^{0}(\Omega^1_Y(\log (\Delta+B)(K_Y))$.
We consider again the residue sequence:
\begin{equation}\label{eq:res3}
0\to \Omega^1_Y\to \Omega^1_Y(\log (B+\Delta))\to \OO_B\oplus\OO_{\Delta}\to 0.
\end{equation}
Since $H^0(\Omega^1_Y)=0$, $H^0(\Omega^1_Y(\log(B+\Delta))$ is the kernel of the coboundary map $\partial \colon H^0(\OO_B)\oplus H^0(\OO_{\Delta})\to H^1(\Omega^1_Y)$. Using the fact that a basis of  $H^1(\Omega^1_Y)$ is given  by the classes of $\epsi^*D$, $\epsi^*F$, $E_1$, $E_2$, $A_1,\dots A_{\nu}$,  one  checks  that $\partial$ is injective in all cases.
So  we conclude that $H^0(\Omega^1_Y(\log(B+\Delta))=0$.
We observe now that in  our situation $\dim|-K_Y|\ge 1$, since all blown up points of $\F_d$ belong to  the fiber $R$ or to the infinity section $D$.  Therefore $h^0(\Omega^1_Y(\log(B+\Delta)(K_Y))\le h^0(\Omega^1_Y(\log(B+\Delta))=0$

Next we consider $h^0(T_Y(-\log(B+\Delta))\le h^0(T_Y(-\log A))$, where $A$ is the unique component of $B+\Delta$ not isomorphic to $\pp^1$. We are going to show that $h^0(T_Y(-\log A))=0$.

Assume $B=A$ irreducible and consider the sequence:
$$0\to T_Y(-A)\to T_Y(-\log A)\to T_A\to 0.$$
We have $H^0(T_A)=0$, because $A$ is a smooth curve of genus larger than $0$. In addition, the isomorphism $T_Y\cong \Omega^1_Y(-K_Y)$ induces an isomorphism  $H^0(T_Y(-A))\cong H^0(\Omega^1_Y(-K_Y-A))$. The latter group is zero, since $K_Y+A$ is effective and $H^0(\Omega^1_Y)=0$. Summing up, we have proven that $\Omega^1_Y(\log (B+\Delta))(K_Y)$ has cohomology only in degree 1. Twisting \eqref{eq:res3} by $K_Y$ and taking cohomology gives:
\begin{gather} h^1( \Omega^1_Y(\log B+\Delta)(K_Y))=-\chi(\Omega^1_Y(K_Y))-\chi(K_Y|_B)-\chi(K_Y|_{\Delta})
\end{gather}
One has:
\begin{gather*} -\chi(\Omega^1_Y(K_Y))=-\chi(T_Y)=10\chi(\OO_Y)-2K_Y^2=2\nu-2, \\ -\chi(K_Y|_{\Delta})=-\chi(\OO_{R'})=-1.
\end{gather*}
If $n+3\ge 3d$, then $B$ is irreducible, $-\chi(K_Y|_B)=\chi(B|_B)=h^0(B)-1=7n+21$ and $h^1(T_Y(-\log(B+\Delta))=7n+18$. 

If $n+3=2d$, writing  $B=D'+B_0$ we get $-\chi(K_Y|_B)=\chi(D'|_{D'})+\chi(B_0|_{B_0})=-d+1+15d-1$ and $h^1(T_Y(-\log(B+\Delta))=14d-3=7n+18$. 

If $3d>n+3>2d$, we write  $B=D'+B_0$ and we get $-\chi(K_Y|_B)=\chi(D'|_{D'})+\chi(B_0|_{B_0})$.

 If $P_1\in D$, then $\chi(D'|_{D'})=-d-\nu$, $\chi(B_0|_{B_0})=3d+6n+19-\nu$
 and $h^1(T_Y(-\log(B+\Delta))=2d+6n+16=7n+18-\nu$. 
If $P_1\notin D$, then $\chi(D'|_{D'})=-d-\nu+1$, $\chi(B_0|_{B_0})=3d+6n+17-\nu$
 and $h^1(T_Y(-\log(B+\Delta))=2d+6n+15=7n+18-\nu$.
 \end{proof}

\section{Remarks and questions}

\subsection{Irreducible components of  $\mathcal M_{n}^{\rm Hor,1}$ and  $\mathcal M_{n}^{\rm Hor,2}$}

Horikawa's  papers \cite{Ho1}, \cite{Ho2}, containing the classification of surfaces of general type with $K^2=2p_g-4$ and with $K^2=2p_g-3$, predate Gieseker's  proof of the existence of the moduli  space of surfaces of general type,  hence there the author  writes of deformation types of surfaces rather than connected components of the moduli space. However he determines number and dimension  of the components and shows that they are irreducible. 

For $n\ge 6$ the proof, which is quite complicated, can be simplified as follows, exploiting the existence of the moduli space of surfaces of general type. Recall (\S \ref{sec:gen}) that for $i=1, 2$ there is a stratification $\mathcal M_{n}^{\rm Hor,i}=\sqcup_d \mathcal M_{n,d}^{\rm Hor,i}$, where $d$ ranges over a suitable subset of the integers and  $\mathcal M_{n,d}^{\rm Hor,i}$ denotes the subset of $\mathcal M_{n}^{\rm Hor,i}$ consisting of the  surfaces whose canonical image is isomorphic to $\F_d$. Using arguments similar to those in the proof of Theorem \ref{thm:TX}, but easier, one can show that for every $d$ the general surface of $\mathcal M_{n,d}^{\rm Hor,i}$ satisfies the conclusions of Theorem \ref{thm:TX}. Then the same argument as in Theorem \ref{thm:smooth} shows that the irreducible components of $\mathcal M_{n}^{\rm Hor,i}$ are the closures  of  the top dimensional strata. Finally, the components turn out to be disjoint (cf. the argument at the end of \S \ref{ssec:proof1}), hence they are actually connected components and  we have recovered Horikawa's result.

\subsection{The stratification of $\mathcal D_n$}
In Section \ref{sec:Hor-sing} we have stratified the locus $\mathcal D_{n}\subset {\mathcal M}^{\rm KSBA}_{n+1,2n-1}$ according to the value $d$ of the type  of the canonical image. 

Note that by the argument that concludes the proof of Theorem \ref{thm:Dn}  a surface in  $\mathcal D_{n,d}$ cannot be a limit of surfaces in  $\mathcal D_{n,d'}$ for $d'>d$.
By Theorem \ref{thm:smooth} for $d\ge 2$ each stratum $\mathcal D_{n,d}$ is contained in the closure of the union of the strata $\mathcal D_{n,d'}$ with $d'\le d-2$, but it is not clear what is the largest  $d'<d$ such that  $\mathcal D_{n,d}$ is contained in the closure of   $\mathcal D_{n,d'}$. Standard arguments show that $d'=d-2$  if $n\ge 3d-1$; on the other hand, looking at Table \ref{tab:Dn} we see that for $n<3d-3$ the dimension of $\mathcal D_{n,d}$ increases with $d$ so that none of these strata can be contained in the closure of the preceding one.  

One can consider  the same  question also for the analogous  stratifications of $\mathcal M_{n}^{\rm Hor,1}$ and $\mathcal M_{n}^{\rm Hor,2}$. To the best of our knowledge the answer to this question is not known in these cases either.

\subsection{Topological and differentiable structure of Horikawa surfaces}

 By a celebrated result of Freedman (\cite[Thm.~1.5]{freedman}) two simply connected 4--manifolds $M_1$, $M_2$ are  orientedly homeomorphic if and only if the intersection forms $<,>_{M_i}$ on $H^2(M_i,\Z)$ are isomorphic for $i=1,2$.  It is a classical result that a unimodular indefinite symmetric bilinear form with integer values   is determined up to isomorphism by its rank, signature and parity.

 If $S$ is a simply connected algebraic surface, then rank and signature of its intersection form can be computed from the numerical invariants (cf. \cite[\S~IX.1]{bhpv}). 
 The Horikawa surfaces of the first and second kind are simply connected, (see \cite [Thm. 3.4]{Ho1}, \cite [Thm. (4.8)]{Ho2}), so to decide whether two
Horikawa surfaces $S_1$ and $S_2$ with the same numerical invariants are homeomorphic it suffices to determine the parity of    $<,>_{S_i}$ on $H^2(S_i,\Z)$  for $i=1,2$. 

For Horikawa surfaces of the second kind $K^2$ is odd, so the intersection form is always odd.
 
The situation is a bit more complicated for Horikawa surfaces of the first  kind. 

 \begin{lem} \label{lem:1} The minimal models of the  surfaces in $\mathcal M_{n}^{\rm Hor,1a}$ have odd intersection form for all $n\ge 3$. \end{lem}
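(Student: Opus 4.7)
The starting point is a deformation-invariance reduction: since the component $\mathcal M_n^{\rm Hor,1a}$ is irreducible by Theorem \ref{thm:Ho1b} and smooth projective surfaces in a connected family share the same intersection form (Ehresmann's fibration theorem applied to a family of minimal models over a smooth base covering the component), it is enough to exhibit one surface in $\mathcal M_n^{\rm Hor,1a}$ whose $H^2$ contains a class of odd self--intersection. By Wu's formula, such a class exists if and only if $K_S$ is not divisible by $2$ in $H^2(S,\Z)$, equivalently $S$ is not spin, equivalently the intersection form is odd.

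To produce such a surface, I would work with the smallest admissible type for the given parity class of $\mathcal M_n^{\rm Hor,1a}$, namely $d=0$ for $n$ odd and $d=1$ for $n$ even, and take $S$ to be the minimal resolution of a double cover $\bar S\to\F_d$ whose branch divisor $B\in|6D+(n+3+3d)F|$ is specialized to be reducible: $B=B_1+B_2$ with $B_1\in|aD+bF|$ and $B_2$ its complement, both smooth and meeting each other transversally at $k:=B_1\cdot B_2$ points. Then $\bar S$ acquires $k$ singularities of type $A_1$ above the nodes of $B$; their minimal resolution introduces $(-2)$--curves $E_1,\dots,E_k$ on $S$. Since $A_1$ is canonical, $K_S=(\pi\circ f)^*(D+\tfrac{n+d-1}{2}F)$ is nef, $K_S^2=2n-2$, $p_g(S)=n+1$, and there are no $(-1)$--curves, so $S$ is a minimal Horikawa surface of the first kind of type $(d)$ and lies in $\mathcal M_n^{\rm Hor,1a}$.

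The heart of the argument is the self-intersection of the strict transform $C\subset S$ of the reduced preimage of $B_1$. On $\bar S$ one has $\pi^*B_1=2C$, since $B_1$ is in the branch; pulling back through the resolution and using $C\cdot E_j=1$, $E_j^2=-2$ determines the coefficients of the $E_j$ uniquely, giving $\pi^*B_1=2C+\sum_{j=1}^k E_j$. Squaring this identity and invoking the degree--$2$ relation $(\pi^*B_1)^2=2B_1^2$ yields the clean formula
\[
C^2=\frac{B_1^2-B_1\cdot B_2}{2}.
\]
A case analysis modulo $4$ in $n$ shows that one can always choose $(a,b)\in\{(1,1),(1,2)\}$ so that $C^2$ is odd: e.g.\ for $d=0$ these give $C^2=-(n+5)/2$ and $C^2=-(n+7)/2$, odd respectively for $n\equiv 1$ and $n\equiv 3\pmod 4$, with analogous computations handling the two residues of $n\pmod 4$ in the $d=1$ case. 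The main obstacle will be the careful correction at the second step, where the naive identity $\pi^*B_1=2C$ valid on $\bar S$ must be upgraded on the resolution to $\pi^*B_1=2C+\sum E_j$; this reflects that $C$ is not Cartier at the $A_1$--singularities of $\bar S$ but has Cartier index $2$ there, so pulling it back through the resolution necessarily picks up half of each exceptional curve.
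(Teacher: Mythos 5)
Your argument is correct, and it follows the same overall strategy as the paper (exhibit one surface in the component carrying a class of odd square, then conclude by deformation invariance of the intersection form of the minimal models across a connected component of the Gieseker moduli space), but the explicit example is genuinely different. The paper keeps the branch divisor $B\in|6D+(n+3+3d)F|$ \emph{smooth} and instead imposes that a ruling $R$ be tangent to $B$ at three distinct points; the preimage of $R$ then splits into two smooth rational curves $\Gamma,\Gamma'$ with $\Gamma+\Gamma'=\fie^*R$, $\Gamma\cdot\Gamma'=3$, hence $\Gamma^2=\Gamma'^2=-3$, which is odd. This has the advantage that the double cover is already smooth and minimal, so no resolution or simultaneous-resolution argument is needed, and the same example works uniformly for all $d$ and both parities of $n$. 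Your construction instead degenerates $B$ to $B_1+B_2$ with $B_1$ of low degree, and extracts the odd class from the "half-pullback" $C$ of $B_1$ via the identity $\pi^*B_1=2C+\sum E_j$ and $C^2=\tfrac12(B_1^2-B_1\cdot B_2)$; I checked the resulting values $-(n+5)/2,\,-(n+7)/2$ for $d=0$ and $-(n+4)/2,\,-(n+6)/2$ for $d=1$, and they do cover all residues of $n$ modulo $4$ with the stated choices of $(a,b)$. The price you pay is that your canonical model acquires $A_1$ points, so the reduction step "the minimal models over a connected component are all diffeomorphic" must be applied to a family whose special member is nodal; this is fine, but it silently uses simultaneous resolution of the rational double points (after a base change) to produce the smooth family of minimal models to which Ehresmann's theorem applies. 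You should make that step explicit, or simply smooth the branch divisor as the paper does to avoid it altogether. A further small point worth recording is why your surfaces lie in $\mathcal M_{n}^{\rm Hor,1a}$ rather than $\mathcal M_{n}^{\rm Hor,1b}$: for $n\not\equiv 1$ mod $4$ the space $\mathcal M_{n}^{\rm Hor,1}$ is irreducible, and for $n\equiv 1$ mod $4$ the type $(0)$ surfaces belong to $\mathcal M_{n}^{\rm Hor,1a}$ by Theorem \ref{thm:Ho1b}, so your choices $d=0,1$ are indeed admissible.
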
 

\begin{proof} Take a double cover $ S\to \F_d$ branched on a divisor $B\in |6D+(n+3)F|$  if $n$ is odd [resp. $B\in |6D+(n+6)F|$  if $n$ is even]
such that $B$ is tangent to a  ruling $R$ of $\F_d$ at three distinct points  and is non singular otherwise.  Then the preimage  of $R$ in  $S$  splits as  the union of two $(-3)$--curves, so  the intersection form of $S$  is odd.   Since surfaces corresponding to points in the same connected  component in the Gieseker moduli space  are diffeomorphic, it follows that all surfaces in $\mathcal M_{n}^{\rm Hor,1a}$ have odd intersection form. \end{proof} 

\begin{lem}\label{lem:2} The minimal models of the  surfaces in $\mathcal M_{n}^{\rm Hor,1b}$ have even intersection form if and only if $n\equiv 5$ modulo 8. \end{lem}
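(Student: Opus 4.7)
Since $\mathcal M_{4k+1}^{\rm Hor,1b}$ is irreducible by Theorem \ref{thm:Ho1b}, all of its points parametrize diffeomorphic minimal models; the plan is therefore to exhibit one convenient representative $S$ of $\mathcal M_{4k+1}^{\rm Hor,1b}$ (writing $n=4k+1$, $k\ge 2$) and read off the parity of the intersection form on that single example. Set $d=2k+2$ and let $D$ denote the negative section of $\F_d$. First I would argue, via Bertini together with Kawamata--Viehweg vanishing (to ensure that the restriction map $H^0(\F_d,\,5D+10(k+1)F)\to H^0(\OO_D)$ is surjective), that the general $B_0\in |5D+10(k+1)F|$ is smooth and disjoint from $D$ (indeed $B_0\cdot D=0$). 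I would then take $f\colon S\to \F_d$ to be the double cover branched along $B=D+B_0\in|6D+(10k+10)F|$ and check, by the standard Horikawa bookkeeping, that $S$ is a smooth minimal Horikawa surface of the first kind with the right invariants; the parameter count $\dim|5D+10(k+1)F|-\dim\Aut(\F_d)=7n+21$ matches Theorem \ref{thm:Ho1b}, so $S$ is general in this component.

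The crucial geometric input is that $D$ is contained in the branch divisor, whence $f^*D=2\tilde D$ with $\tilde D:=f^{-1}(D)_{\mathrm{red}}\cong\pp^1$; the projection formula then yields $\tilde D^2=-(k+1)$ and $\tilde D\cdot f^*F=1$. Combining this with the ramification formula $K_S=f^*(K_{\F_d}+\tfrac12 B)$ and the identity $K_{\F_d}+\tfrac12 B=D+(3k+1)F$, I obtain
\[
K_S=2\tilde D+(3k+1)\,f^*F \qquad \text{in }\Pic(S)\subset H^2(S,\Z).
\]
From here the dichotomy is transparent: if $k$ is odd then $3k+1$ is even, so $K_S$ is twice a Picard class, hence $w_2(S)=0$ and the intersection form is even; if $k$ is even, then
\[
K_S\cdot\tilde D=-2(k+1)+(3k+1)=k-1
\]
is odd, which rules out $K_S\in 2H^2(S,\Z)$ and forces the intersection form to be odd. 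Since $n=4k+1$, the condition ``$k$ odd'' is precisely ``$n\equiv 5\pmod 8$''.

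The main conceptual hurdle is the very first geometric observation: noticing that $D$ \emph{must} be a component of the branch divisor for surfaces of type $(2k+2)$ (because $B\cdot D=-2k-2<0$), and that this automatically makes $f^*D$ $2$-divisible in $\Pic(S)$. Once this is spotted, the rest of the argument reduces to a parity check on the single integer $3k+1$, with no topological machinery needed beyond the standard identification $w_2\equiv K_S\pmod 2$.
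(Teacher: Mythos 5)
Your argument is correct, and your ``if'' direction coincides with the paper's: both rest on the observation that for type $(2k{+}2)$ the negative section $D$ is forced into the branch locus (here $B\cdot D=-2k-2<0$), so that $f^*D=2\tilde D$ and $K_S=2\tilde D+(3k+1)f^*F$ is $2$-divisible exactly when $k$ is odd, i.e.\ $n\equiv 5\pmod 8$; the paper compresses this into ``the Hurwitz formula shows that the canonical class is divisible by $2$.'' Where you genuinely diverge is the ``only if'' direction. The paper argues globally and topologically: an even unimodular form has signature divisible by $16$ (Rochlin), and $\sigma(S)=-6(n+3)$ is divisible by $16$ precisely when $n\equiv 5\pmod 8$, so no construction and no appeal to irreducibility of the moduli component is needed --- the conclusion holds for every surface in $\mathcal M_n^{\rm Hor,1b}$ directly. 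You instead exhibit, on one explicit representative, a class of odd square when $k$ is even (indeed $\tilde D^2=-(k+1)$ is already odd, which is even quicker than passing through $K_S\cdot\tilde D$), and then propagate the conclusion to the whole component via Theorem \ref{thm:Ho1b} and the deformation/diffeomorphism invariance of the parity --- the same reduction the paper itself uses in Lemma \ref{lem:1}. Your route is more elementary (no Rochlin) and more explicit, at the cost of needing the irreducibility of $\mathcal M_n^{\rm Hor,1b}$ and the genericity bookkeeping for the representative (which, incidentally, you do not really need: any single smooth member of the connected component suffices, so the parameter count is superfluous). Both proofs are complete and correct.
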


\begin{proof}  If the intersection form  is even, it has signature divisible by 16 by Rochlin's Theorem (cf. \cite[Thm.IX.2.1]{bhpv}) and this condition is equivalent to  $n\equiv 5$ modulo 8. 

Conversely, if $n\equiv 5$ modulo 8 then the Hurwitz formula shows that the canonical class is divisible by 2. Since the second Stiefel--Whitney class is the mod 2 reduction of the canonical class, the intersection form is even. \end{proof} 

By Lemmata \ref {lem:1} and \ref {lem:2},  the problem of deciding whether two Horikawa surfaces with the same numerical invariants are diffeomorphic is non--trivial only when the moduli space is disconnected and there are no known  topological obstructions, namely:
\begin{itemize}
\item[(a)] for Horikawa surfaces of the first kind with $n\equiv 1$ modulo 8
\item[(b)] for Horikawa surfaces of the second kind with $n\equiv 0$ modulo 4. 
\end{itemize}
Case (a) is a long standing open problem: the question was asked for the first time by Horikawa in \cite{Ho1} and all the known differential invariants fail to distiguish the surfaces in the two connected components of the moduli space (cf. \cite{auroux}, \cite{fintushel-stern}). By Theorem 1.5 of \cite{manetti}, if the  intersection of the closures in the KSBA moduli  of two connected components $\mathcal M_1$ and $\mathcal M_2$ of the Gieseker moduli space contains a surface with T--singularities only, then the surfaces  in $\mathcal M_1$ and $\mathcal M_2$ are diffeomorphic. Our results seem to indicate that in case (b) this strategy fails for 2--Gorenstein T--singularities: by Theorem \ref{thm:smooth} the KSBA moduli space is smooth at a general point of $\mathcal D_{n,d}$ for all admissible values of $d$,  so it cannot happen that $\mathcal D_{n,d}$ is contained  in $\ol{\mathcal M^{\rm Hor,2a}_n}\cap\ol{\mathcal M^{\rm Hor,2b}_n}$.  As for case (a), in \cite{rana-rollenske} it is shown that $\ol{\mathcal M^{\rm Hor,1a}_n}\cap\ol{\mathcal M^{\rm Hor,1b}_n}$ is nonempty but not that it contains points corresponding to normal surfaces.

 We close this discussion by pointing out a surprising consequence of Theorem \ref{thm:smoothD}. For $n=4k+1$ the moduli space  $\ol{ \mathcal M^{\rm Hor,2}_n}$ is irreducible and contains two components of $\mathcal D_n$, $\mathcal D_{n,0}$ and $\mathcal D_{n,2k+2}$. If $S_1$ is the minimal resolution of a general surface  of $\mathcal D_{n,0}$ and $S_2$ is the minimal resolution of a general surface of $\mathcal D_{n,2k+2}$, then  a general surface $X\in \mathcal M^{\rm Hor,2}_n$ is diffeomorphic to  the rational blow-down  of the exceptional $(-4)$-curve of $S_1$ (cf. \cite[Example~5.9]{W}, \cite[Prop.~1.5]{manetti})  and also to the rational blow-down  of the exceptional $(-4)$-curve of $S_2$, even though, as recalled above, for $n\equiv 5$ modulo 8 the surfaces $S_1$ and $S_2$ do not have the same homotopy type. 

 On the other hand,  again by Theorem \ref{thm:smoothD},  if  $n=4k$ then $\mathcal D_n$ has two components,   $\mathcal D_{n,1}\subset \ol{\mathcal M_n^{\rm Hor, 2a}}$ and $\mathcal D_{n,2k+1}\subset \ol{\mathcal M_n^{\rm Hor, 2b}}$. The surfaces in the two components have  diffeomorphic  resolutions, since $\mathcal M_n^{\rm Hor, 1}$ is irreducible. Therefore surfaces in $\mathcal M_n^{\rm Hor, 2a}$ and $\mathcal M_n^{\rm Hor, 2b}$ are diffeomorphic to rational blow-downs of $(-4)$-curves $C_a$ and $C_b$ sitting in diffeomorphic surfaces $S_a$ and $S_b$. However this is not enough to prove that they are diffeomorphic, since we don't know whether the diffeomorphism between $S_a$ and $S_b$ sends $C_a$ to $C_b$.

\appendix
\section{Linear systems on blown up Segre--Hirzebruch surfaces}\label{sec:appendix1}

We collect here some auxiliary results  needed  in \S \ref{sec:Tchain} and in \S \ref{sec:Hor-sing}.

We recall first the following elementary fact, whose proof is left to the reader:
\begin{lem}\label {lem:P}
 Let $d\in \mathbb N_{>0}$, let $\epsi\colon Y\to \F_d$ be the blow-up at a point $P$ and let $E$ be the exceptional curve. For  $a\in \mathbb N$ set $|N_a|:=|\epsi^*(D_0+aF)-E|$.
If $a>0$ or $a=0$ and $P\notin D$, then $|N_a|$ is free. 
\end{lem}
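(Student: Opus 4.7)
The plan is to verify that $|N_a|$ is base-point free by showing that, for every $Q\in Y$, some member of $|N_a|$ avoids $Q$.  Pushing down by $\epsi$, this splits into two tasks: (a) if $Q\notin E$, setting $Q':=\epsi(Q)\ne P$, find $C\in|D_0+aF|$ with $P\in C$ and $Q'\notin C$; (b) if $Q\in E$, corresponding to a tangent direction $v$ at $P$, find $C\in|D_0+aF|$ passing through $P$ with multiplicity one and whose tangent direction at $P$ differs from $v$.  I would handle the two sufficient conditions separately.

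\medskip

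When $a\ge 1$, the divisor $D_0+aF=D+(a+d)F$ is very ample on $\F_d$; this is the classical fact that $\F_d$ embeds in projective space as a smooth rational normal scroll of degree $2a+d$ under this linear system.  Very ampleness immediately supplies both (a) (separation of points) and (b) (separation of tangent directions) at any point, hence the lemma.

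\medskip

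When $a=0$ and $P\notin D$, I would use the equally classical fact that $|D_0|$ is base-point free (to be verified directly from the exact sequence $0\to\OO_{\F_d}(dF)\to\OO_{\F_d}(D_0)\to\OO_D\to 0$, noting that both $H^0(dF)$ and the lift of $1\in H^0(\OO_D)$ contribute) and defines a birational morphism $\phi\colon\F_d\to\pp^{d+1}$ onto a surface of minimal degree, namely the cone over the rational normal curve of degree $d$ when $d\ge 2$, and $\pp^2$ when $d=1$; in either case $\phi$ contracts $D$ to a point and is an isomorphism on $\F_d\smallsetminus D$. Since $P\notin D$, $\phi$ is a local isomorphism near $P$ and $\phi(P)$ is a smooth point of the image.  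For (a) the image $\phi(Q')$ is distinct from $\phi(P)$ in both alternatives (when $Q'\in D$ because $\phi(Q')$ is the vertex while $\phi(P)$ is smooth; when $Q'\notin D$ by the injectivity of $\phi$ on $\F_d\smallsetminus D$), so a hyperplane of $\pp^{d+1}$ through $\phi(P)$ avoiding $\phi(Q')$ pulls back to the required $C$.  For (b), the local isomorphism property means that tangent directions at $P$ correspond to tangent directions at $\phi(P)$ in the image, and each of these is achieved by a hyperplane through $\phi(P)$, yielding a suitable $C$.  The only real obstacle is the invocation of the two classical facts on Hirzebruch surfaces (very ampleness of $D_0+aF$ for $a\ge 1$, and the cone/$\pp^2$ description of the image of $|D_0|$); once these are granted the argument reduces to bookkeeping, consistent with the authors' remark that the proof is elementary.
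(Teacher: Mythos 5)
Your argument is correct. Note that the paper itself gives no proof of this lemma (it is explicitly ``left to the reader''), so there is nothing to compare against; what you supply is a complete and valid justification. Your two cases are both sound: for $a\ge 1$ the divisor $D_0+aF=D+(a+d)F$ is indeed very ample on $\F_d$ (this requires exactly $a+d>d$), and very ampleness of a line bundle $L$ does imply that $|\epsi^*L-E|$ is free on the one-point blow-up, by the translation you describe into separation of the point $P$ from other points and of tangent directions at $P$; for $a=0$ with $P\notin D$, the description of $\varphi_{|D_0|}$ as the contraction of $D$ onto a cone (or onto $\pp^2$ when $d=1$), an isomorphism off $D$, gives both requirements at the smooth point $\varphi(P)$. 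Two cosmetic remarks: when $d=1$ the phrase ``$\varphi(Q')$ is the vertex'' should be replaced by the observation that $\varphi^{-1}(\varphi(D))=D$, so $\varphi(Q')\ne\varphi(P)$ for $Q'\in D$ anyway; and in case (b) for $a=0$ you should note (as your argument implicitly does) that a general hyperplane through $\varphi(P)$ meets the image surface transversally there, so that the pulled-back curve really has multiplicity one at $P$. An alternative proof more in the style of the rest of the appendix (cf.\ Lemma \ref{lem:tangent} and Proposition \ref{prop:P1P2}) avoids very ampleness altogether: for $a\ge1$ the decomposition $|N_a|\supseteq |\epsi^*(D_0+(a-1)F)|+(\epsi^*F-E)$ confines the base locus to the strict transform $R'$ of the fiber through $P$, and since $(D_0+aF)\cdot F=1$ the strict transform of a general member through $P$ is disjoint from $R'$, killing the base locus; the case $a=0$, $P\notin D$ is handled similarly using $|N_0|\supseteq |\epsi^*(dF)|+ \epsi^*D+(\epsi^*F-E)$ together with a member disjoint from $\epsi^*D\cup R'$. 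Either route is acceptable; yours trades the elementary decomposition bookkeeping for two standard facts about Hirzebruch surfaces.
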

Next  we introduce some notation. Given  $d>0$, we fix a ruling $R$ on $\F_d$  and points $P_1,P_2\in R$ with  $P_1\in \F_d$, $P_2$ possibly  infinitely near to $P_1$ and $P_2\notin D$.  Up to the action of $\Aut(\F_d)$, for $d>0$  there are four possible cases,  according to whether $P_1$ lies on $D$ or not and whether $P_2$ is infinitely near to $P_1$ or not. We denote by $\epsi_{ij}\colon Y_{ij}\to \F_d$ the blow-up at $P_1,P_2$, where $i=0$ if $P_1\notin D$ and $i=1$ otherwise and $j=0$ if $P_2$ is not infinitely near to $P_1$ and $j=1$ otherwise. For $d=0$,  we only define $Y_{00}$ and $Y_{01}$, according to whether $P_2$  is a proper  point or is infinitely near to $P_1$. We denote by $E_1$, $E_2$ the exceptional curves over $P_1$, $P_2$  ($E_1\cdot E_2=0$, $E_2$ is irreducible, $E_1$ is reducible only  when $j=1$)  and by $C'$ the strict transform on $Y_{ij}$ of a curve $C$ on $\F_d$. 

\begin{lem} \label{lem:ample} In the above setup let  $i\in\{0,1\}$, $2\le a\in \mathbb N$ and consider  on $Y_{i0}$ the line bundle $M_a:=\epsi_{i0}^*(2D_0+aF)-E_1-E_2$. 
Then   $M_a$ is nef and $R'$ is the only irreducible curve whose intersection with $M_a$ is 0. 
\end{lem}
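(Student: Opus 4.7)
The plan is to establish nefness of $M_a$ by decomposing it as the sum of two free divisor classes, and then to identify $R'$ as the unique curve of zero intersection by a direct case analysis. The equality $R'\cdot M_a=0$ is immediate from $R'=\epsi_{i0}^{*}F-E_1-E_2$ and $(2D_0+aF)\cdot F=2$, so the content lies in proving $M_a\cdot C\ge 1$ for every irreducible curve $C\neq R'$.

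For nefness I would write $M_a=L_1+L_2$, where
\[
L_1:=\epsi_{i0}^{*}(D_0+(a-1)F)-E_1,\qquad L_2:=\epsi_{i0}^{*}(D_0+F)-E_2.
\]
In the $j=0$ setting, the two blow-ups constituting $\epsi_{i0}$ commute (both $P_1$ and $P_2$ are proper points of $\F_d$), so each $L_k$ is the pull-back to $Y_{i0}$ of the linear system $|N_{a-1}|$, resp.\ $|N_1|$, on the blow-up of $\F_d$ at a single point. By Lemma \ref{lem:P} the first is free provided $a-1>0$ (which holds for $a\ge 2$ regardless of whether $P_1\in D$), and the second is free because $P_2\notin D$. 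Hence both summands are nef, and so is $M_a$.

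Next I would pin down the curves $C$ with $M_a\cdot C=0$ by classifying them according to $C_0:=\epsi_{i0}(C)$. The exceptional curves satisfy $M_a\cdot E_k=1$. Otherwise $C=\epsi_{i0}^{*}C_0-m_1E_1-m_2E_2$ with $m_k=\mathrm{mult}_{P_k}(C_0)$; writing $C_0\sim\alpha D+\beta F$ one computes $M_a\cdot C=a\alpha+2\beta-m_1-m_2$. If $C_0=D$, then $m_2=0$ (since $P_2\notin D$) and $m_1\le 1$, giving $M_a\cdot C\ge a-1\ge 1$. If $C_0$ is a fiber distinct from $R$, it misses both $P_1$ and $P_2$ (which lie on $R$), so $M_a\cdot C=2$; if $C_0=R$, then $C=R'$ and the intersection vanishes.

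The hard part will be the generic case $\alpha\ge 1$ with $C_0\notin\{D\}\cup\{\text{fibers}\}$. Here $\beta\ge\alpha d\ge 0$ follows from $C_0\cdot D\ge 0$, and the bound $m_1+m_2\le\alpha$ rests on the geometry of $R$ passing through both $P_1$ and $P_2$: since $R$ is smooth and $C_0\neq R$, the local intersection $(C_0\cdot R)_{P_k}$ is at least $m_k$, so summing gives $m_1+m_2\le C_0\cdot R=\alpha$. Combining these yields $M_a\cdot C\ge (a-1)\alpha+2\beta\ge a-1\ge 1$, which completes the identification of $R'$ as the unique irreducible curve of zero intersection.
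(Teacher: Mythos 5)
Your proof is correct, but it takes a genuinely different route from the paper's. The paper proves both assertions at once via the single decomposition $M_a=\epsi_{i0}^*(2D_0+(a-1)F)+R'$: for any irreducible $\Gamma\ne R'$ both summands meet $\Gamma$ non-negatively, and if both intersections vanish then $\Gamma$ is not exceptional (since $E_k\cdot R'=1$) and pushes forward to a curve orthogonal to the ample class $2D_0+(a-1)F$, a contradiction. You instead split the argument: nefness via the decomposition $M_a=L_1+L_2$ into two classes that are pulled back from free systems on single blow-ups (Lemma \ref{lem:P}), and uniqueness of $R'$ via an explicit computation $M_a\cdot C=a\alpha+2\beta-m_1-m_2$ together with the bounds $\beta\ge\alpha d$ and $m_1+m_2\le C_0\cdot R=\alpha$. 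Both work; the paper's version is shorter and avoids the case analysis entirely because it only needs ampleness of $2D_0+(a-1)F$ downstairs, while yours is more elementary and makes the positivity quantitative ($M_a\cdot C\ge a-1$ off the listed exceptions). Two small remarks: your appeal to $P_2\notin D$ for the freeness of $L_2$ is unnecessary, since Lemma \ref{lem:P} already gives freeness of $|N_1|$ from $1>0$; and Lemma \ref{lem:P} is stated only for $d>0$, so for $d=0$ (where by convention $i=0$) you should note that the corresponding system on $\pp^1\times\pp^1$ blown up at one point is obviously free, or simply observe that your general-case estimate already covers every non-exceptional curve there.
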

\begin{proof}
It is immediate to check that $R'\cdot M_a=0$.  Let $\Gamma\ne R'$ be an irreducible curve such that $0=\Gamma\cdot M_a=\Gamma\cdot \epsi_{i0}^*(2D_0+(a-1)F)+\Gamma \cdot R'=0$. Then $\Gamma\cdot \epsi_{i0}^*(2D_0+(a-1)F)=\Gamma \cdot R'=0$ because $\epsi_{i0}^*(2D_0+(a-1)F)$ is nef.   Since  $E_i\cdot R'=1$ for $i=1,2$, the curve $\Gamma$ does not coincide with either $E_1$ or $E_2$, hence it   is not contracted by $\epsi_{i0}$, so it maps to a curve $\bar \Gamma$ of $\F_d$ such that $\bar\Gamma\cdot (2D_0+(a-1)F)=0$. This is a contradiction because $2D_0+(a-1)F$ is ample. 
\end{proof}
   
\begin{lem} \label{lem:tangent} In the above setup let  $0<d\in \mathbb N$,  $i\in\{0,1\}$, $a\in \mathbb N$ and consider  on $Y_{i1}$ the linear system $|M_a|:=|\epsi_{i1}^*(2D_0+aF)-E_1-E_2|$. 

If either  $a>0$ or $a=0$ and $i=0$ then   $|M_a|$ is base point free. 
\end{lem}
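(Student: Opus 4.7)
My plan is to show that $|M_a|$ is base-point-free by first identifying a subsystem whose base locus is contained in a union of curves ($R'$ in general, with $D':=\epsi_{i1}^* D$ added when $a=0$, $i=0$), and then using a cohomology argument to remove each of these potential base curves in turn.

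For the subsystem, note that $R'$ has class $\epsi_{i1}^* F - E_1 - E_2$ on $Y_{i1}$, so $M_a - R' = \epsi_{i1}^*(2D_0 + (a-1)F)$ and $|M_a|$ contains $R' + |\epsi_{i1}^*(2D_0 + (a-1)F)|$. For $a \geq 1$, the system $|2D_0 + (a-1)F|$ is base-point-free on $\F_d$ (a standard fact about Hirzebruch surfaces), so its pullback is base-point-free on $Y_{i1}$ and the above subsystem has base locus exactly $R'$. For $a=0$, $i=0$ instead $|2D_0 - F| = D + |D + (2d-1)F|$ with $|D + (2d-1)F|$ base-point-free (since $2d-1 \geq d$ for $d \geq 1$), so the subsystem has base locus contained in $R' \cup D'$ (here $D' = \epsi_{i1}^* D$ because $P_1,P_2 \notin D$).

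To eliminate $R'$ from the base locus of $|M_a|$ I will use the restriction sequence
\[
0 \to M_a - R' \to M_a \to M_a|_{R'} \to 0.
\]
A direct intersection computation using $\epsi_{i1}^* F \cdot E_i = 0$ and $(E_1+E_2)^2=-2$ gives $M_a \cdot R' = 0$, so $M_a|_{R'} \cong \OO_{R'}$. By Lemma \ref{lem:simple} one has $H^1(Y_{i1}, M_a - R') = H^1(\F_d, 2D_0 + (a-1)F)$, and this vanishes for every $a \geq 0$ by a routine Leray computation on $\F_d \to \pp^1$ (the direct image is a sum of line bundles of degree $\geq -1$). Hence the restriction $H^0(M_a) \to H^0(\OO_{R'}) = \C$ is surjective, producing a section which is a nonzero constant on $R'$ and therefore vanishes nowhere on $R'$; so $R'$ contains no base point of $|M_a|$.

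For the remaining case $a=0$, $i=0$, the same strategy applies with $D'$ in place of $R'$: one has $M_0 \cdot D' = 0$ (immediate from $P_1, P_2 \notin D$) and it suffices to prove $H^1(M_0 - D') = 0$, where $M_0 - D' = \epsi_{i1}^*(D + 2dF) - E_1 - E_2$ is no longer a pullback. I will deduce this from the sequence
\[
0 \to M_0 - D' \to \epsi_{i1}^*(D + 2dF) \to \OO_{E_1+E_2} \to 0,
\]
together with $H^1(\F_d, D + 2dF) = 0$, the vanishing $H^1(\OO_{E_1+E_2}) = 0$ (via Mayer--Vietoris on the reducible and possibly non-reduced configuration $E_1 + E_2$), and surjectivity of the evaluation $H^0(\F_d, D + 2dF) \to H^0(\OO_{E_1+E_2})$, which follows from very ampleness of $|D + 2dF|$ at $P_1$. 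This $a=0$, $i=0$ case, involving the cohomology of the non-reduced exceptional scheme, is the main technical obstacle.
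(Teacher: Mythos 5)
Your argument is correct, but it diverges from the paper's at the step where the candidate base curves are removed, so a comparison is in order. Both proofs start from the same subsystem decomposition, namely $R'+|\epsi_{i1}^*(2D_0+(a-1)F)|$ for $a>0$ and $R'+D'+|\epsi_{i1}^*(D_0+(d-1)F)|$ for $a=0$, $i=0$, hence identify the same candidate base curves. To remove $R'$ the paper works downstairs on $\F_d$: the vanishing $h^1(2D_0+(a-1)F)=0$ makes the restriction $H^0(2D_0+aF)\to H^0(\OO_R(2))$ surjective, so one may choose $C\in|2D_0+aF|$ with $C|_R=2P_1$, and then $\epsi_{i1}^*C-E_1-E_2$ is a member of $|M_a|$ disjoint from $R'$; your restriction sequence along $R'$ rests on exactly the same vanishing, so this step is essentially a repackaging. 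The genuine difference is at $a=0$, $i=0$: the paper exhibits the explicit member $2C'+(E_1-E_2)\in|M_0|$, with $C\in|D_0|$ through $P_1$ and disjoint from $D$, which visibly misses $D'$ and so avoids the cohomology of the non-reduced scheme $E_1+E_2$ altogether. Your route instead requires $H^1(\epsi_{i1}^*(D+2dF)-E_1-E_2)=0$, which you correctly reduce to $h^1(\OO_{E_1+E_2})=0$ and to surjectivity of $H^0(D+2dF)\to H^0(\OO_{E_1+E_2})$; these do hold (one has $H^0(\OO_{E_1+E_2})\cong\OO_{\F_d,P_1}/I$ with $I\supseteq\mathfrak m_{P_1}^2$ the colength-two ideal of the cluster $\{P_1,P_2\}$, so very ampleness of $D+2dF$ suffices), but you should spell out this identification rather than gesture at Mayer--Vietoris. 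In short, your method is more mechanical and would generalize without having to guess auxiliary curves, at the cost of handling the non-reduced exceptional scheme; the paper's explicit divisors make the $a=0$ case a one-line verification.
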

\begin{proof}
The restriction map $H^0(2D_0+aF)\to H^0(\OO_R(2))$ on $\F_d$  is surjective since $h^1(2D_0+(a-1)F)=0$, hence we may find $C\in |2D_0+aF|$ such that $C|_R=2P_1$. So $\epsi_{i1}^*C-E_1-E_2$ is disjoint from $R'$ and therefore no base point of $|M_a|$ lies on $R'$. 

If $a>0$, then the base locus of     the subsystem   $R'+|\epsi_{i1}^*(2D_0+(a-1)F)|\subset |M_a|$ is $R'$, so $|M_a|$ is free.

 If $a=0$  then the base locus of   the subsystem $R'+D'+|\epsi_{i1}^*(D_0+(d-1)F)|\subset |M_0|$ is $R'+D'$, so the base points of $|M_0|$, if any, are contained in $D'$. However, we may find a curve $C\in |D_0|$ going through $P_1$ and disjoint from $D$. Then $|M_0|$ is free since it   contains the curve  $2C'+(E_1-E_2)$ that is disjoint from $D'$.\end{proof}

\begin{prop}\label{prop:P1P2} In the above set-up 
let $n,d\in \mathbb N$ such that   $n\ge \max\{2d-3, d+3\}$ and $n-d$ is  odd and let $i,j\in\{0,1\}$.\footnote{If $d=0$, then $i=0$, per our conventions.}
  Set $L_{ij}:=\epsi_{ij}^*(6D+(n+3+3d)F)-2E_1-2E_2|$   and write $|L_{ij}|=Z_{ij}+|M_{ij}|$, where $Z_{ij}$ is the fixed part and $|M_{ij}|$ is the moving part.

Then:
\begin{enumerate}
\item  the general curve of $|L_{ij}|$ is non--reduced if and only if $n=2d-3$ and $i=1$ or $n=2d-2$ and $i=j=1$. 
\item if the general curve of $|L_{ij}|$  is reduced then  $|M_{ij}|$ is free, $M_{ij}^2>0$, the general curve in $|M_{ij}|$ is smooth
 and $Z_{ij}$ is either zero or snc. 
\item  $\dim|L_{i0}|=\dim|L_{i1}|$ for $i=0,1$
\end{enumerate}
In addition, $\dim |L_{00}|$,  $\dim |L_{10}|$ and $Z_{ij}$ can be read off  Table \ref{tab:M} and Table \ref{tab:Z}. 
\footnote{The  symbol \xmark\,  means that either the linear system is not defined or falls in case (i)}

\begin{table}[h!]  
\caption{}\label{tab:M}
  \begin{center}
    {\rm
       \begin{tabular}{|c|c|c|} 
      \hline
       & $\dim|L_{00}|$ &  $\dim|L_{10}|$ \\
       \hline
       $n\ge 3d-1$  &$7n+21$ &   $7n+21$  \\
      \hline
     $ n=3d-3$   & $7n+21$ &  $7n+22$ \\
       \hline
     $ 2d-1\le n<3d-3$  & $6n+3d+17$   & $6n+3d+19$ \\
      \hline
      $ n=2d-2 $  & $7n+d+19$   & $7n+d+21$\\
      \hline
         $n=2d-3$  & $6n+3d+17$  & \xmark  \\
      \hline
    \end{tabular}}
  \end{center}
\end{table}

\begin{table}[h!]
\caption{}
 \label{tab:Z}
  \begin{center}
    {\rm
        \begin{tabular}{|c|c|c|c|c|}       \hline
      &$Z_{00}$ & $Z_{01}$ & $Z_{10}$& $Z_{11}$ \\
       \hline
       $n\ge 3d-1$  &$0$ & $0$&  $0$& $0$  \\
      \hline
      $ n=3d-3$   & 0 & 0 & $D'$  &$D'+(E_1-E_2)$\\
       \hline
      $ 2d-1\le n<3d-3$  & $D'$ & $D'$ & $D'$  &$D'+(E_1-E_2)$ \\
      \hline
       $n=2d-2$  & $ D' $  &$D'$   & $D'$ & \xmark  \\
      \hline

       $n=2d-3$  & $ D' $  &$D'$   & \xmark & \xmark  \\
      \hline
    \end{tabular}}
  \end{center}
\end{table}
\end{prop}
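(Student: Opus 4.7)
The plan is to reduce the entire analysis to a study of $|6D+(n+3+3d)F|$ on $\F_d$ together with the multiplicity-$2$ conditions imposed at $P_1$ and $P_2$. The starting point is the intersection number $D \cdot (6D+(n+3+3d)F) = n+3-3d$: as long as this is non-negative (i.e.\ $n \ge 3d-3$), $D$ is not a fixed component on $\F_d$; for $2d-3 \le n < 3d-3$ one has $D \cdot (5D+(n+3+3d)F) = n+3-2d \ge 0$, so $D$ splits off exactly once; and the equality case $n=2d-3$ is degenerate because this last intersection is zero, which is what will ultimately force a second copy of $D'$ into the base locus of $|L_{10}|$ (case (i)). In parallel, on $Y_{i1}$ the strict transform $\tilde E_1 = E_1 - E_2$ is a $(-2)$-curve with $\tilde E_1 \cdot \epsi_{i1}^{*}(6D+(n+3+3d)F)=0$, $\tilde E_1 \cdot \bar E_1 = -1$ and $\tilde E_1 \cdot E_2 = 1$, so $\tilde E_1$ enters the base locus precisely when the ambient system has already been stripped of other components and the leftover negative intersection becomes visible.

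I would then carry out the case-by-case analysis organized by the rows of Tables \ref{tab:M} and \ref{tab:Z}. For each $(n,d,i,j)$, the subroutine is: compute $D' \cdot L_{ij}$, $\tilde E_1 \cdot L_{ij}$, and $R' \cdot L_{ij}$; iteratively strip off any component with negative intersection; read the result as $L_{ij} = Z_{ij} + M_{ij}$. This produces the entries of Table \ref{tab:Z} directly: $Z=0$ for $n \ge 3d-1$; $Z=D'$ appears once the $D$-intersection becomes negative on $Y_{10}$ (and on $Y_{00}$ in the $n<3d-3$ range because a copy of $D$ must split off already on $\F_d$); and $Z$ picks up the extra summand $\tilde E_1 = E_1-E_2$ precisely in the $j=1$ column at $n \le 3d-3$. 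The case (i) exclusion is then detected exactly when this algorithm is forced to extract a component with multiplicity $\ge 2$: this happens in $|L_{10}|$ at $n=2d-3$ (two copies of $D'$) and in $|L_{11}|$ at $n=2d-2$ (one $D'$, then $\tilde E_1$, then a second $D'$), corresponding to the two cases listed in (i).

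Having identified $Z_{ij}$, the dimension computation is reduced to that of $|M_{ij}|$ on $Y_{ij}$, which is obtained from an explicit line bundle of the form $\epsi_{ij}^{*}(aD+bF) - cE_1 - eE_2$ with $b \ge ad$; one then uses $h^0(aD+bF)=(a+1)(b+1)-\binom{a+1}{2}d$ on $\F_d$ together with the fact that the residual multiplicity conditions at $P_1, P_2$ are independent (a local coordinate check, using that $D$, once stripped off, passes through $P_1$ with the recorded multiplicity). For freeness of $|M_{ij}|$ one exhibits free subsystems via Lemmas \ref{lem:P} and \ref{lem:tangent}: in every non-excluded case, $M_{ij}$ can be written as a sum of pullbacks of $|D_0+aF|$-type systems minus appropriate exceptional curves, all free. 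Bertini then gives smoothness of the general member of $|M_{ij}|$, while $Z_{ij}$ is snc because each of $D'$ and $\tilde E_1$ is smooth rational and they intersect transversely at one point ($D' \cdot \tilde E_1 = 1$ in the one configuration where both occur). For (iii) I would verify in local coordinates at $P_1$ that the two double-point conditions (one at a proper $P_2$, one at the infinitely near $P_2$) both impose six independent conditions on $H^0(\F_d,6D+(n+3+3d)F)$, which forces $\dim|L_{i0}|=\dim|L_{i1}|$.

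The principal obstacle is the bookkeeping at the three boundary values $n=3d-3$, $n=2d-2$, $n=2d-3$, where curves enter or leave the base locus as the parameter crosses the threshold, and where the distinction between $j=0$ and $j=1$ becomes delicate (because on $Y_{i1}$ the $(-2)$-curve $\tilde E_1$ can join $D'$ in the fixed part and trigger further stripping). Treating these boundary rows uniformly requires, at each step, checking not only the negative-intersection condition but also that the resulting residual system is still effective and remains reduced; the case-(i) exclusions arise precisely where this fails.
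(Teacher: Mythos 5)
Your strategy for parts (i) and (ii) and for Table \ref{tab:Z} is essentially the paper's: detect fixed components by negative intersection numbers, strip them iteratively, flag the non-reduced cases when a component must be extracted twice, and prove freeness of the moving part by exhibiting free subsystems built from Lemmas \ref{lem:P} and \ref{lem:tangent}. That portion is sound. The genuine problem is in your dimension argument. You propose to compute $\dim|L_{ij}|$, and to prove (iii), by asserting that the two double-point conditions at $P_1,P_2$ impose six independent conditions on $H^0(\F_d,6D+(n+3+3d)F)$. This is false whenever $i=1$ and $n\le 3d-3$. For instance, for $2d-1\le n<3d-3$ one has $\dim|6D+(n+3+3d)F|=6n+3d+23$ while Table \ref{tab:M} gives $\dim|L_{10}|=6n+3d+19$: the two double points drop the dimension by only $4$, not $6$. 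Similarly at $n=3d-3$ the drop is $5$, and at $n=2d-2$ (with $i=1$) it is again $4$. The failure is geometric: requiring multiplicity $2$ at $P_1\in D$ forces $D'$ into the base locus (since $D'\cdot L_{1j}=(n+3-3d)-2<0$ in these ranges), and once every member contains $D'$, which already passes through $P_1$, part of the condition at $P_1$ comes for free. So the (iii) paragraph as written proves the wrong statement in exactly the rows where (iii) is delicate, and your Table \ref{tab:M} entries would come out wrong for $i=1$ unless the count is redone on the residual system with an independence check that you only gesture at.

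The fix is either to carry out the condition count on the residual system $|M_{ij}|$ (imposing a simple point at $P_1$ and a double point at $P_2$ on $|L_{ij}-Z_{ij}|$ pushed down to $\F_d$, and actually verifying independence there), or, more cleanly, to do what the paper does: once $|M_{ij}|$ is known to be free with $M_{ij}^2>0$, a general $\Delta\in|M_{ij}|$ is smooth and irreducible, $h^1(\OO_{Y_{ij}})=0$ gives $\dim|L_{ij}|=h^0(M_{ij}|_{\Delta})$, and since $K_{Y_{ij}}\cdot M_{ij}<0$ Riemann--Roch on $\Delta$ yields $\dim|L_{ij}|=\tfrac12(M_{ij}^2-M_{ij}\cdot K_{Y_{ij}})$ with no independence issue at all; part (iii) then follows by comparing the resulting numbers for $j=0$ and $j=1$, which agree because $M_{i0}$ and $M_{i1}$ have the same self-intersection and the same intersection with the canonical class.
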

\begin{proof}
Consider first the case  $n= 2d-3$. We have  $|L_{1j}|=D'+|\epsi_{1j}^*(5D_0)-E_1-2E_2|$ and $D'\cdot(\epsi_{1j}^*(5D_0)-E_1-2E_2)=-1<0$, so $D'$ appears  in the fixed part of $|L_{1j}|$ with multiplicity larger than $1$.
Similarly, if  $n=2d-2$  then   $|L_{11}|=D'+(E_1-E_2)+| \epsi_{11}^*(5D_0+F)-2E_1-E_2|$ and $D'\cdot(\epsi_{11}^*(5D_0)+F-2E_1-E_2)=-1<0$, so again $D'$  appears  in the fixed part of $|L_{11}|$ with multiplicity larger than $1$. 
So in the above   cases the general curve of $|L_{ij}|$ is not reduced, as claimed. 

To complete the proof we are going to  determine $Z_{ij}$ and $|M_{ij}|$ in all the remaining cases. In each case it turns out that   that $|M_{ij}|$ is a free system with $M_{ij}^2>0$. So $M_{ij}$ is nef  and big and a general divisor  $\Delta\in |M_{ij}|$   is smooth and irreducible. Since $h^1(\OO_Y)=0$, we get $\dim |L_{ij}|=\dim |M_{ij}|=h^0(M_{ij}|_{\Delta})$. In addition  Riemann-Roch theorem on $\Delta$ gives $h^0(M_{ij}|_{\Delta})=\frac 12(M_{ij}^2-M_{ij}\cdot K_{Y_{ij}})$,   because $K_{Y_{ij}}\cdot M_{ij}=(-\epsi_{ij}^*(D+D_0+F)-R')\cdot M_{ij}\le -\epsi_{ij}^*D_0\cdot M_{ij}<0$. So Table \ref{tab:M} will  be filled automatically once  we have completed  Table \ref{tab:Z}. 

If  $n< 3d-3$,  then $D'\le Z_{ij}$ for $i,j\in\{0,1\}$, since $D$ is in the fixed part of $|6D+(n+3+3d)F|$ on $\F_d$.
The system  $|\epsi_{0j}^*(5D_0)-2E_1-2E_2|$ is easily seen to be free for $j=0,1$. In fact this is immediate for $j=0$ and for $j=1$ this system contains the divisors of  $|2(\epsi_{01}^*(2D_0)-E_1-E_2)|+|\epsi_{01}^*D_0|$, hence  it is free by Lemma \ref{lem:tangent}. So for $2d-3\le n<3d-3$ one has  $Z_{0j}=D'$ and $|M_{0j}|=|\epsi^*_{0j}(5D_0+(n+3-2d)F)-2E_1-2E_2|$ is free. 

Now look at  $|\epsi_{10}^*(5D_0+F)-E_1-2E_2|$: this system contains $|\epsi_{10}^*(D_0+F)-E_1|+|2\epsi_{10}^*(2D_0)-E_2|$ and therefore is free.  So for $2d-2\le n<3d-3$ one has  $Z_{10}=D'$ and $|M_{10}|=|\epsi^*_{10}(5D_0+(n+3-2d)F)-E_1-2E_2|$ is free. 

 Last consider  $i=j=1$: $(E_1-E_2)$ is a $(-2)$--curve and $(E_1-E_2)\cdot (L_{11}-D')=-1$, so $D'+(E_2-E_1)\le Z_{11}$. The system $|\epsi_{11}^*(5D_0+2F)-2E_1-E_2|$ contains $|\epsi_{11}^*(2D_0+F)-E_1-E_2|+|\epsi_{11}^*(3D_0+F)-E_1|$, so it is free by Lemma \ref{lem:tangent} and Lemma \ref{lem:P}. It follows that for  $2d-1\le n<3d-3$ one has  $Z_{11}=D'+(E_1-E_2)$ and $|M_{11}|=|\epsi^*_{11}(5D_0+(n+3-2d)F)-2E_1-E_2|$ is free.  
 
Assume  $n= 3d-1$. 
The system $|L_{i0}|$ is free for $i=0,1$, by Lemma \ref{lem:P}, since it contains $|2(\epsi_{i0}^*(D_0)-E_2)|+|2(\epsi_{i0}^*(D_0+F)-E_1))|+|2\epsi_{i0}^*(D_0)|$. If $j=1$, we observe instead that for $i=0,1$ the system $|L_{i1}|$ contains $|2(\epsi_{i1}^*(2D_0+F)-E_1-E_2)|+|2\epsi_{i1}^*D_0|$, and so it is free by Lemma \ref{lem:tangent}. Therefore $|L_{ij}|$ is free for $i,j\in\{0,1\}$ and $n=3d-1$. A fortiori, $|L_{ij}|$ is free for $i,j\in\{0,1\}$ and $n\ge 3d-1$. 

The case $n=3d-2$ is ruled out by the assumption that $n-d$ is odd, so we are left with the ``borderline case''  $n=3d-3$.   
The system $|L_{0j}|$ is easily seen to be free  for $j=0,1$ arguing as for $3d-1$.
For $i=1$ one has $D'\cdot L_{1j}=-2$ and therefore $D'$ is a fixed component  of $|L_{1j}|$.  Again, arguing as above it is not hard to check that  $|L_{10}-D'|=|\epsi_{10}^*(5D_0+dF)-E_1-2E_2|$ is free. 

If $P_2$ is infinitely near to $P_1$, then $E_1-E_2$ is a $(-2)$--curve and $(E_1-E_2)\cdot (L_{11}-D')=-1$, therefore $D'+(E_1-E_2)\le Z_{11}$. The condition $3d-3=n\ge 3$ implies $d\ge 2$, so  $|L_{11}-D'-(E_1-E_2)|=|\epsi_{11}^*(5D_0+dF)-2E_1-E_2|$ can be shown to be free as in case $2d-1\le n<3d-3$. The proof is now complete. 
\end{proof}

 \end{document}